  \newcommand{\proofof}[1]{}
  \newcommand{\writeref}[1]{}
  \newcommand{\marginref}[1]{}
  \newcommand{\marginproof}[2]{}
\newcommand{\idop}{\mathbb{1}}
\newcommand{\can}{\ka} 
\newcommand{\wncan}{\wt{\can}} 
\newcommand{\CAN}{\mathfrak{c}} 
\newcommand{\Anglearrow}[1]{\rotatebox{#1}{$\Rightarrow$}}
\theoremstyle{plain}
\newtheorem*{shh}{Stable Homotopy Hypothesis}
\subjclass[2010]{Primary: 55P42; Secondary: 55P15, 55P48, 19D23, 18D05, 18D50}
\title{The 2-dimensional stable homotopy hypothesis}
\thanks{The third author was supported by Simons Foundation Grant No. 359449,
the Woodrow Wilson Career Enhancement Fellowship and NSF grant
DMS-1709302.}
\begin{document}

\begin{abstract}
  We prove that the homotopy theory of Picard 2-categories is
  equivalent to that of stable 2-types.
\end{abstract}

\maketitle


\section{Introduction}

Grothendieck's Homotopy Hypothesis posits an equivalence of homotopy
theories between homotopy $n$-types and weak $n$-groupoids.  We pursue
a similar vision in the stable setting.  Inspiration for a stable
version of the Homotopy Hypothesis begins with
\cite{Seg74Categories,may72geo} which show, for 1-categories, that
symmetric monoidal structures give rise to infinite loop space
structures on their classifying spaces.  Thomason
\cite{Tho95Symmetric} proved this is an equivalence of homotopy
theories, relative to stable homotopy equivalences.  This suggests
that the categorical counterpart to stabilization is the presence of a
symmetric monoidal structure with all cells invertible, an intuition
that is reinforced by a panoply of results from the group-completion
theorem of May \cite{May1974Einfty} to the Baez-Dolan stabilization
hypothesis \cite{BD1995Higher,Bat2017Operadic} and beyond.  A stable
homotopy $n$-type is a spectrum with nontrivial homotopy groups only
in dimensions $0$ through $n$.  The corresponding symmetric monoidal
$n$-categories with invertible cells are known as Picard
$n$-categories.  We can thus formulate the Stable Homotopy Hypothesis.

\begin{shh}
  There is an equivalence of homotopy theories between
  $\Pic^n$, Picard $n$-categories equipped with categorical
  equivalences, and $\Sp_0^n$, stable homotopy $n$-types
  equipped with stable equivalences.
\end{shh}

For $n=0$, the Stable Homotopy Hypothesis is the equivalence of
homotopy theories between abelian groups and Eilenberg-Mac Lane
spectra.  The case $n=1$ is described by the second two authors in
\cite{JO12Modeling}.  Beyond proving the equivalence of the homotopy
theories, they constructed a dictionary in which the algebraic
invariants of the stable homotopy 1-type (the two homotopy groups and
the unique $k$-invariant) can be read directly from the Picard
category. Moreover, they gave a construction of the stable 1-type of
the sphere spectrum.

The main result of this paper, \cref{thm:main}, is the Stable Homotopy
Hypothesis for $n = 2$.  In this case, the categorical equivalences
are biequivalences.  The advantage of being able to work with
categorical equivalences is that the maps in the homotopy category
between two stable 2-types modeled by strict Picard 2-categories are
realized by symmetric monoidal pseudofunctors---not general
zigzags. In fact, the set of homotopy classes of maps between two
strict Picard 2-categories $\cD$ and $\cD'$ is the quotient of the set
of symmetric monoidal pseudofunctors $\cD\to \cD'$ modulo monoidal
pseudonatural equivalence.

In future work we will develop 2-categorical models for the 2-type of
the sphere and for fiber/cofiber sequences of stable 2-types.  We can
apply these to give algebraic expressions for the secondary operations
arising from a stable Postnikov tower and for the low-dimensional
algebraic $K$-groups of a commutative ring.  Moreover, via the theory
of cofibers (cokernels) associated with a Postnikov tower, we may shed
new light on the theory of symmetric monoidal tricategories.

Our proof of the 2-dimensional stable homotopy hypothesis is a
culmination of previous work in \cite{GJO2017KTheory} and
\cite{GJOS2017Postnikov}.  Although we have attempted to make the
current account as self-contained as possible, we rely heavily
on this and other previous work.  We include selective reviews as needed.  The proof of
the main theorem functions as an executive summary of the paper, and
the reader may find it helpful to begin reading there.

\subsection{The homotopy hypothesis and categorical stability}

The Stable Homotopy Hypothesis (SHH) in dimension 2 establishes an
equivalence between items (\ref{it:st2}) and (\ref{it:pic2}) of the
following conjecturally equivalent homotopy theories:
\begin{enumerate}
\item\label{it:top46} 3-connected topological 6-types and topological weak equivalences;
\item\label{it:cat46} weak 6-groupoids with only a single cell in dimensions 0 to
  3 and 6-categorical equivalences;
\item\label{it:st2} stable topological 2-types and stable weak
  equivalences; and
\item\label{it:pic2} Picard 2-categories and 2-categorical equivalences.
\end{enumerate}
The equivalence of homotopy theories between items (\ref{it:top46})
and (\ref{it:st2}) is an immediate consequence of the Freudenthal
Suspension Theorem (FST).  Equivalences between (\ref{it:top46}) and
(\ref{it:cat46}) or between (\ref{it:cat46}) and (\ref{it:pic2}) are
the subjects of, respectively, the Unstable Homotopy Hypothesis (UHH) and
the Baez-Dolan Stabilization Hypothesis (BDSH).  Both of these have been
studied in lower dimensions, but nothing approaching dimension 6 for
fully algebraic notions of higher category has appeared as of this
writing.  The following schematic diagram sketches these statements.

\ 
\begin{center}
\begin{tikzpicture}[x=65mm, y=30mm,
  block/.style ={rectangle, thick,
    text width=9em, align=center, rounded corners,
    minimum height=2em, outer sep=2mm},
  stable/.style={draw=black},
  unstable/.style={draw=black}]
  \draw
  (0,0) node[unstable, block] (i) {\textit{i}.\ 3-connected unstable 6-types}
  (1,0) node[unstable, block] (ii) {\textit{ii}.\ 3-connected weak 6-groupoids}
  (0,-1) node[stable, block] (iii) {\textit{iii}.\ stable 2-types}
  (1,-1) node[stable, block] (iv) {\textit{iv}.\ Picard 2-categories};
  \draw[<->] (i) -- node[left] {FST} (iii);
  \draw[<->] (iii) -- node[below] {SHH} (iv);
  \draw[<->, dashed] (i) -- node[above] {UHH} (ii);
  \draw[<->, dashed] (ii) -- node[right] {BDSH} (iv);
\end{tikzpicture}
\end{center}
\ 

Analogues of the Baez-Dolan  Stabilization Hypothesis have been proved for
Tamsamani's weak $n$-categories
\cite{Simpson1998Breen,Simpson2012Homotopy}, for certain algebras over
operads in $(\infty,n)$-categories \cite{GH2015Enriched}, and for
Rezk's $\Theta_n$-categories \cite{Bat2017Operadic}.  However, it is
unknown how these homotopical notions of higher categories compare
with fully algebraic weak $n$-categories for $n > 2$ (see
\cite{LP20082nerves} for a comparison in dimension 2).  In
particular, work of Cheng and the first author
\cite{CG2011periodicII,CG2014Iterated} shows clearly that, in
codimension greater than 1, additional subtleties arise even in the
\emph{formulation} of the Stabilization Hypothesis for fully algebraic
notions of weak $n$-category.  In the absence of a comparison between
algebraic and homotopical notions of higher category, using the
homotopical definitions like $\Theta_n$-categories does not address
the core motivation for the (Stable) Homotopy Hypothesis: to compare
fully topological notions with fully algebraic ones.

\subsection{Outline}
We begin with necessary topological background in
\cref{sec:topological-background}, particularly recalling the theory
of group-completion and an elementary consequence of the relative
Hurewicz theorem.  Next we recall the relevant algebra of symmetric
monoidal structures on 2-categories in \cref{sec:algebra}, including a
discussion of both the fully weak case (symmetric monoidal
bicategories) and what might be called the semi-strict case
(permutative Gray-monoids).

The core construction in this paper is a ``Picardification.''  That
is, the construction of a Picard 2-category from a general permutative
Gray-monoid, while retaining the same stable homotopy groups in
dimensions 0, 1, 2.  This entails a group-completion, and to apply
previous work on group-completion we develop an independent theory of
symmetric monoidal bicategories arising from $E_\infty$ algebras in
2-categories in \cref{sec:operads}.  This theory is an extension of
techniques first developed by the first and third authors for the
little $n$-cubes operad \cite{GO2012Infinite}.

Our subsequent analysis in \cref{sec:gp-cpn} uses the
 fundamental 2-groupoid of Moerdijk-Svensson \cite{MS93Algebraic}
(\cref{sec:W}), the $K$-theory for 2-categories developed in
\cite{GJO2017KTheory} (\cref{sec:K-thy}), and the topological
group-completion theorem of May \cite{May1974Einfty}
(\cref{sec:top-gp-cpn}).  We combine these to conclude with the proof of
the main theorem in \cref{sec:main}.

\subsection{Acknowledgements}

The authors would like to thank Peter May, Mikhail Kapranov, and Chris
Schommer-Pries for helpful conversations.

\section{Topological background}
\label{sec:topological-background}

In this section we review basic topological background needed for the
work in this paper. We will use topological spaces built using the
geometric realization of a simplicial nerve for 2-categories, and we
begin by fixing notation and reviewing the monoidal properties of the
relevant functors.  We then turn to group completion and Postnikov
truncations, both of which play a key role in this work.

\subsection{Topological spaces and nerves of 2-categories}

\begin{notn}\label{notn:top}
  For spaces, we work in the category of compactly-generated weak
  Hausdorff spaces and denote this category $\Top$.
\end{notn}

\begin{notn}
  We let $\sset$ denote the category of simplicial sets.
\end{notn}

\begin{notn}
  We let $|-|$ and $S$ denote, respectively, the geometric realization
  and singular functors between simplicial sets and topological
  spaces.
\end{notn}

\begin{notn}\label{notn:cat-iicat}
  We let $\cat$ denote the category of categories and functors, and
  let $\iicat$ denote the category of 2-categories and 2-functors.
  Note that these are both 1-categories.
\end{notn}

The category of 2-categories admits a number of morphism variants, and
it will be useful for us to have separate notations for these.
\begin{notn}
  We let $\iicatps$ denote the category of 2-categories with
  pseudofunctors and let $\iicatnps$ denote the category of
  2-categories with normal pseudofunctors, that is, pseudofunctors which preserve the identities strictly.  We let $\iicatnop$ denote
  the category of 2-categories and normal oplax functors.  Note that
  these are all 1-categories.
\end{notn}

The well-known nerve construction extends to 2-categories (in fact to
general bicategories) in a number of different but equivalent ways
\cite{Gur2009Nerves,CCG10Nerves}.
\begin{notn}
  We let $N$ denote the nerve functor from categories to simplicial
  sets.  By abuse of notation, we also let $N$ denote the
  2-dimensional nerve on $\iicatps$.  This nerve has 2-simplices given
  by 2-cells whose target is a composite of two 1-cells, as in the
  display below.
  \[
    \begin{tikzpicture}[x=15mm,y=15mm]
      \draw[tikzob,mm] 
      (0:0) node (a) {\cdot}
      (60:1) node (b) {\cdot}
      (0:1) node (c) {\cdot}
      (.5,.35) node {\Uparrow}
      ;
      \path[tikzar,mm] 
      (a) edge node {} (b)
      (b) edge node {} (c)
      (a) edge node {} (c)
      ;
    \end{tikzpicture}
  \]
  This is the nerve used by \cite{MS93Algebraic} in their study of the
  Whitehead 2-groupoid (see \cref{sec:W}).  A detailed study of this
  nerve, together with 9 other nerves for bicategories, appears in
  \cite{CCG10Nerves} with further work in \cite{CHR2015Bicategorical}.
\end{notn}

\begin{prop}\label{prop:classical-monoidal-functors} 
  The functors 
  \begin{itemize}
  \item $|-|\cn \sset \to \Top$
  \item $S\cn \Top \to \sset$
  \item $N\cn \iicat_{ps} \to \sset$
  \end{itemize}
  are strong symmetric monoidal with respect to cartesian product. The
  adjunction between geometric realization and the singular functor
  is monoidal in the sense that the unit and the counit are
  monoidal natural transformations.
\end{prop}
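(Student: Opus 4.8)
The plan is to reduce all three claims to the single statement that each functor preserves finite products, and then invoke the general principle that a finite-product-preserving functor between categories regarded as cartesian monoidal is canonically strong symmetric monoidal: the structure morphisms are the comparison maps built from the universal property of the product, and the associativity, unit, and symmetry coherences then hold automatically, again by that universal property. Thus it suffices to verify, for each of $|-|$, $S$, and $N$, that it carries the terminal object to the terminal object and that the canonical comparison map out of the image of a binary product is an isomorphism.

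For $S$ this is immediate: $S$ is the right adjoint of $|-|$, hence preserves all limits, in particular finite products, so the comparison $S(X\times Y)\to SX\times SY$ is an isomorphism and $S$ sends the point to the point. For $|-|$ the content is the classical theorem (the reason we work in the convenient category $\Top$ fixed in \cref{notn:top}) that geometric realization preserves finite products once the target is compactly-generated weak Hausdorff spaces: the natural map $|X\times Y|\to |X|\times|Y|$ is a homeomorphism and $|\Delta^0|$ is a point. For $N$ we must first record that the product in $\iicatps$ is computed on underlying data as the cartesian product of 2-categories, with projections the evident strict 2-functors, since a pseudofunctor into a product is precisely a pair of pseudofunctors; then we check directly from the definition that an $n$-simplex of $N(\cC\times\cD)$---a system of objects, 1-cells, and 2-cells of the stated shape---is exactly a pair consisting of such a system in $\cC$ and one in $\cD$, so $N(\cC\times\cD)\cong N\cC\times N\cD$ naturally, and the nerve of the terminal 2-category is a point. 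This product-preservation is recorded for this and the other bicategorical nerves in \cite{CCG10Nerves}.

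The step requiring genuine care is the nerve $N$, both because the product must be taken in the pseudofunctor category $\iicatps$ rather than in the strict $\iicat$, and because one must match the simplicial combinatorics of the chosen 2-nerve against the pairing of cells in a product 2-category; the other two functors are either a formal consequence of adjointness or a citation of a standard theorem.

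It then remains to upgrade the adjunction $|-|\dashv S$ to a monoidal adjunction. Once $|-|$ is strong symmetric monoidal, the monoidal structure on its right adjoint $S$ is forced to be the mate of the (inverse) structure constraints of $|-|$; by the theory of doctrinal adjunction this is exactly the structure produced above on $S$, and it is the unique structure making the adjunction monoidal. Consequently the unit $\eta\cn \idop\Rightarrow S|-|$ and the counit $\epsilon\cn |S-|\Rightarrow\idop$ are monoidal natural transformations, which is the final assertion. Concretely one verifies the two compatibility diagrams for $\eta$ and for $\epsilon$ by transposing across the adjunction and using that all of the constraints involved are the canonical product comparison maps, so that each diagram collapses to the naturality of those comparisons.
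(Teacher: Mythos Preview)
Your proof is correct and follows essentially the same strategy as the paper: reduce each claim to preservation of finite products and cite the classical fact that $|-|$ preserves products in compactly-generated spaces. The only notable difference is that the paper dispatches $N$ in the same breath as $S$, asserting that both preserve products because they are right adjoints, whereas you verify product preservation of $N$ directly by inspecting simplices and the product in $\iicatps$. Your route is slightly more self-contained (it avoids needing to know a left adjoint to $N$ on $\iicatps$), while the paper's is terser; the doctrinal-adjunction framing you give for the monoidality of the unit and counit is more explicit than the paper's citation to Gabriel--Zisman but amounts to the same content.
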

\begin{proof}
  The fact that $S$ and $N$ preserve products follows from the fact
  that they are right adjoints. The statements about $|-|$ and the (co)unit are standard
  (see for example \cite[Section 3.4]{GZ1967Calculus}), but depend on good
  categorical properties of compactly-generated Hausdorff spaces.
\end{proof}

\begin{notn}\label{notn:B}
We will write $B$ for the classifying space of a 2-category, so $B \cC=
|N \cC|$ for a 2-category $\cC$.
\end{notn}

\subsection{Group-completions and Postnikov truncations}
In this section we review and fix terminology for group-completion,
Postnikov truncation, and the attendant notions of equivalence for
spaces.

\begin{defn}\label{defn:gpcompletion}
  A map of homotopy associative and homotopy commutative $H$-spaces
  $f\cn X\to Y$ is a \emph{(topological) group-completion} if
 \begin{itemize}
\item $\pi_0(Y)$ is a group and $f_\ast \cn \pi_0(X) \to \pi_0(Y)$ is
an
   algebraic group-completion, and
 \item for any field of coefficients $k$, the map
 \[
   H_*(X;k)[\pi_0(X)^{-1}] \to H_*(Y;k)
 \]
 induced by $f_*$ is an isomorphism.
 \end{itemize}
\end{defn}

\begin{rmk}
  The group-completion of a given homotopy associative and homotopy
  commutative $H$-space $X$, if one exists, is unique up to weak
  homotopy equivalence by the Whitehead theorem. The definition was
  motivated by the work of Barratt and Barratt-Priddy
  \cite{Bar1961Note,BP1972Homology} and of Quillen
  \cite{Quillen71GpCompletion}, who proved that for a homotopy
  commutative simplicial monoid $M$, the map $M \to \Om BM$ satisfies
  the homology condition of \cref{defn:gpcompletion}.  The work of
  \cite{May1974Einfty,Seg74Categories} constructs group-completions
  for $E_\infty$-spaces, and both of these are foundational for
  results which we use in this paper (see
  \cref{thm:property-of-Kthy,thm:spectra-from-Einfty}).
   \end{rmk}
   
 \begin{notn}
   Let $X$ be a homotopy associative and homotopy commutative
   $H$-space. If a topological group-completion of $X$ exists, we
   denote it by $X\to \Om BX$.
 \end{notn}


\begin{defn}\label{defn:P_n-equiv}
  Let $P_n$ denote the $n$th Postnikov truncation on the category of
  spaces.  This is a localizing functor, and the
  \emph{$P_n$-equivalences} are those maps $f\cn X \to Y$ which induce
  isomorphisms on $\pi_i$ for $0 \leq i \leq n$ and all choices of
  basepoint. We likewise define $P_n$-equivalences for maps of
  simplicial sets.
\end{defn}

We will also require the slightly weaker, and more classical,
notion of $n$-equivalence.
\begin{defn}\label{defn:n-equiv}
  Let $n \ge 0$.  A map of spaces $f\cn X \to Y$ is an
  \emph{$n$-equivalence} if, for all choices of basepoint $x \in X$,
  the induced map
  \[
  \pi_q(X,x) \to \pi_q(Y,f(x))
  \]
  is a bijection for $0 \leq q < n$ and a surjection for $q =
  n$.  Note that this notion does not satisfy the 2-out-of-3
  property in general.
\end{defn}

Clearly every $(n+1)$-equivalence is a $P_n$-equivalence, and
every $P_n$-equivalence can be replaced, via Postnikov truncation, by
a zigzag of $(n+1)$-equivalences.  Indeed, the collection of
$P_n$-equivalences is the closure of the collection of 
$(n+1)$-equivalences with respect to the 2-out-of-3 property.

\begin{rmk}
  For a map of spaces $f\cn X \to Y$, the following are equivalent.
  \begin{itemize}
  \item The map $f$ is an $n$-equivalence.
  \item For all choices of basepoint $x \in X$, the homotopy fiber of
    $f$ over $x$ is an $(n-1)$-connected space.
  \item The pair $(Mf,X)$ is $n$-connected, where $Mf$ denotes the
    mapping cylinder of $f$.
    That is, $\pi_0(X)$ surjects onto
    $\pi_0(Mf)$ and the relative homotopy groups
    $\pi_q(Mf,X)$ are trivial for $0 < q \leq n$.
  \end{itemize}
\end{rmk}

We require the following result connecting $n$-equivalences with
group-completion, particularly the subsequent corollary.  The case
$n = \infty$ follows from Whitehead's theorem, but we have not
discovered a reference for finite $n$.  We give a proof below.
\begin{prop}\label{prop:n-equiv}
  Let $n \ge 0$ and let $f\cn X \to Y$ be an $E_\infty$ map.  If $f$
  is an $n$-equivalence then the group-completion
  $\Om B f\cn \Om B X \to \Om B Y$ is an $n$-equivalence.
\end{prop}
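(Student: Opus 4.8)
The plan is to convert the homotopical statement into a homological one, exploit the fact that group-completion is by definition a homological localization (\cref{defn:gpcompletion}), and then convert back. The two conversions are instances of the relative Hurewicz theorem and of the Whitehead theorem; the latter is applied to $\Om BX$ and $\Om BY$, which are grouplike homotopy-commutative $H$-spaces and hence simple, and in particular nilpotent, spaces. For such spaces a map is an $n$-equivalence if and only if it is an $n$-equivalence on integral homology. The middle step is purely algebraic: localization of a graded module over the Pontryagin ring at the central multiplicative set $\pi_0$ is exact, so it preserves the pattern ``isomorphism below degree $n$, epimorphism in degree $n$.''

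First I would dispose of $\pi_0$ and reduce to a single component. For $n=0$ there is nothing homological to prove: by \cref{defn:gpcompletion} the sets $\pi_0(\Om BX)$ and $\pi_0(\Om BY)$ are the group-completions of the commutative monoids $\pi_0 X$ and $\pi_0 Y$; group-completion of commutative monoids is a left adjoint and hence preserves the surjection $\pi_0(f)$, so $\pi_0(\Om Bf)$ is onto. For $n\ge 1$ the map $\pi_0(f)$ is a bijection, hence so is its group-completion $\pi_0(\Om Bf)$. Because $\Om BX$ and $\Om BY$ are grouplike, translation by a fixed element is a homotopy equivalence between any two path components, and these translations are compatible with the $H$-map $\Om Bf$; thus the clause ``for all choices of basepoint'' in \cref{defn:n-equiv} reduces to the statement that $\Om Bf$ is an $n$-equivalence on the base components, which are connected simple $H$-spaces.

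For the homology input, the hypothesis that $f$ is an $n$-equivalence says exactly that the pair $(Mf,X)$ is $n$-connected, so the relative Hurewicz theorem shows that $f_\ast\cn H_q(X)\to H_q(Y)$ is an isomorphism for $q<n$ and an epimorphism for $q=n$, with coefficients in any abelian group. Now apply the group-completion isomorphism to $f$: naturality of $X\to \Om BX$ gives a commuting square, and since $f_\ast$ carries the multiplicative set $\pi_0 X$ into $\pi_0 Y$ (bijectively when $n\ge 1$), the induced map on homology is identified with the localization
\[
  H_\ast(X)[\pi_0 X^{-1}] \to H_\ast(Y)[\pi_0 Y^{-1}]
\]
of $f_\ast$ at this central multiplicative set. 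Localization at a central multiplicative set is exact, so it takes the degreewise isomorphisms and the top-degree epimorphism to the same, and hence $\Om Bf$ is an $n$-equivalence on homology. Restricting to base components and invoking the Whitehead theorem for simple spaces promotes this to a homotopy $n$-equivalence, which together with the $\pi_0$ computation finishes the proof.

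The main obstacle is the coefficient bookkeeping in this last promotion. The Whitehead--Hurewicz passage from homology back to homotopy requires integral (equivalently, all-abelian-group) homology information, whereas \cref{defn:gpcompletion} records the group-completion isomorphism only with field coefficients; a field-coefficient $n$-equivalence is genuinely weaker (for instance $\ast\to K(\mathbb{Q}/\mathbb{Z},n)$ is one but is not an $n$-equivalence). The point to get right is therefore that the exactness-of-localization argument be run with the correct coefficients: the relative-Hurewicz step already yields an all-coefficient homology $n$-equivalence for $f$, and localization is exact for a module over the Pontryagin ring with coefficients in any abelian group, so the localized map remains an all-coefficient homology $n$-equivalence. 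One must also keep the degrees honest---verifying in particular that the epimorphism in the top degree $n$ survives localization, and that the possibly nontrivial fundamental groups of the base components are absorbed by their simplicity---so that no degree is lost in either Hurewicz conversion.
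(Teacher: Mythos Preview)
Your approach matches the paper's proof closely: convert the $n$-equivalence to a homology-$n$-equivalence via relative Hurewicz, push it through the localization of the Pontryagin ring (which is exact), and convert back on the unit component, then propagate to all components by translation in a grouplike $H$-space. The paper packages both Hurewicz conversions into a single lemma, \cref{cor:Hurewicz-for-maps}, whose converse direction is stated for $H$-maps between path-connected $H$-spaces rather than appealing to the Whitehead theorem for simple spaces as you do; these are essentially the same mechanism (trivial $\pi_1$-action on the relative homotopy groups). One small difference: the paper explicitly reduces the forward Hurewicz step to path components of $f$ before assembling the homology statement for $f$ itself, which you should also do since $X$ and $Y$ need not be connected.

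Your flagged concern about coefficients is legitimate and is not addressed in the paper's proof either. The paper writes $H_*(X)[\pi_0(X)^{-1}] \to H_*(Y)[\pi_0(Y)^{-1}]$ without specifying coefficients and then feeds the conclusion into \cref{cor:Hurewicz-for-maps}, which is an integral statement; but \cref{defn:gpcompletion} only identifies this localization with $H_*(\Om BX)$ over fields. So your attempted resolution---run the exactness-of-localization step with arbitrary coefficients---does not close the gap, because the identification of the localized homology with the homology of $\Om BX$ is exactly what is only asserted over fields. In practice the group-completions actually used here (May's construction, \cref{thm:spectra-from-Einfty}) satisfy the integral version of the group-completion theorem, which is what both arguments are implicitly invoking; you should cite that rather than try to bootstrap from the field-coefficient definition.
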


\begin{defn}
  A space $Y$ is said to be \emph{$k$-coconnected} if $\pi_i Y = 0$
  for $i \geq k$.
\end{defn}
\begin{cor}\label{cor:n-equiv-alt} 
  Let $n \ge 0$ and let $f\cn X \to Y$ be an $E_\infty$ map.  If $f$
  is a $P_n$-equivalence and $Y$ is $(n+1)$-coconnected, then
  $\Om B f\cn \Om B X \to \Om B Y$ is a $P_n$-equivalence.
\end{cor}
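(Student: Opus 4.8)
The plan is to obtain this as an immediate consequence of \cref{prop:n-equiv}, applied not at index $n$ but at index $n+1$. The key observation is that the two hypotheses conspire to upgrade $f$ from a mere $P_n$-equivalence into a genuine $(n+1)$-equivalence, after which the proposition does all the work.

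To see this, I would fix a basepoint $x \in X$ and examine $f_* \cn \pi_q(X,x) \to \pi_q(Y, f(x))$. Since $f$ is a $P_n$-equivalence, this map is a bijection for every $q$ with $0 \le q \le n$; in particular it is a bijection for all $0 \le q < n+1$. For $q = n+1$, the hypothesis that $Y$ is $(n+1)$-coconnected gives $\pi_{n+1}(Y, f(x)) = 0$, so $f_*$ is trivially surjective there. As this holds for every choice of basepoint, $f$ satisfies exactly the defining conditions of \cref{defn:n-equiv} with $n$ replaced by $n+1$; that is, $f$ is an $(n+1)$-equivalence.

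Now I would invoke \cref{prop:n-equiv} with $n+1$ in place of $n$: since $f$ is an $E_\infty$ map which is an $(n+1)$-equivalence, the group-completion $\Om B f \cn \Om B X \to \Om B Y$ is again an $(n+1)$-equivalence. Finally, every $(n+1)$-equivalence is a $P_n$-equivalence (as noted immediately after \cref{defn:n-equiv}), so $\Om B f$ is a $P_n$-equivalence, as required.

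I do not anticipate a genuine obstacle here, since essentially all the analytic content is carried by \cref{prop:n-equiv}; the corollary is little more than a repackaging of it. The only points demanding care are the bookkeeping of basepoints and the reindexing from $n$ to $n+1$, together with the observation that coconnectivity of the target $Y$ is precisely what is needed to promote surjectivity at level $n+1$ from something to be verified to something automatic. It is worth emphasizing that I do \emph{not} need the group-completion $\Om B Y$ to remain $(n+1)$-coconnected; the conclusion follows purely from the $(n+1)$-equivalence output of the proposition.
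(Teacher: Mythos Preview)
Your proposal is correct and is exactly the argument the paper intends: the corollary is marked as an immediate consequence of \cref{prop:n-equiv}, and your observation that the $P_n$-equivalence hypothesis together with $(n+1)$-coconnectedness of $Y$ upgrades $f$ to an $(n+1)$-equivalence is precisely the missing step. The concluding passage from $(n+1)$-equivalence back to $P_n$-equivalence is likewise the one noted after \cref{defn:n-equiv}.
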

\marginproof{cor:n-equiv-alt}{prop:n-equiv}

Our proof of \cref{prop:n-equiv} makes use of the relative Hurewicz
theorem, specifically a corollary below which we have also not
discovered in
the literature.
\begin{thm}[Relative Hurewicz {\cite[Theorem 4.37]{Hatcher2002Algebraic}}]
Suppose $(X, A)$ is an $(n − 1)$-connected pair of path-connected
spaces
with $n \ge 2$ and $x_0 \in A$, and suppose that $\pi_1(A,x_0)$ acts
trivially on $\pi_1(X,A,x_0)$.  Then the Hurewicz homomorphism
\[
\pi_i(X, A, x_0) \to H_i(X, A)
\]
is an isomorphism for $i \leq n$.
\end{thm}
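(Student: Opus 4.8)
This is the classical relative Hurewicz theorem, and the plan is to reduce it to the \emph{absolute} Hurewicz theorem by means of cellular models, rather than to reprove the absolute case. First I would invoke CW approximation to replace the pair $(X,A)$ by a CW pair; this alters neither the homotopy nor the homology groups, and by naturality it preserves both the $(n-1)$-connectivity and the triviality of the action on the relative homotopy. Using $(n-1)$-connectivity I would then compress this model, up to homotopy equivalence rel $A$, so that every relative cell has dimension at least $n$. In such a model $\pi_i(X,A,x_0)$ and $H_i(X,A)$ both vanish for $i<n$, so the whole content of the theorem is the single isomorphism $\pi_n(X,A,x_0)\xrightarrow{h}H_n(X,A)$.

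Next I would pass to the quotient. Since a CW pair is a good pair, $H_i(X,A)\cong\widetilde H_i(X/A)$ for all $i$, and because the relative cells all lie in dimensions at least $n$ the quotient $X/A$ is $(n-1)$-connected, hence simply connected. The absolute Hurewicz theorem then supplies an isomorphism $\pi_n(X/A)\cong\widetilde H_n(X/A)\cong H_n(X,A)$. It remains to compare this target with $\pi_n(X,A,x_0)$ through the map $q_*\colon\pi_n(X,A,x_0)\to\pi_n(X/A)$ induced by the collapse $q\colon(X,A)\to(X/A,*)$, and to verify that, under the identifications above, $q_*$ agrees with the Hurewicz map $h$.

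The crux, and the step I expect to be the main obstacle, is to show that $q_*$ realizes $\pi_n(X/A)$ as the $\pi_1(A,x_0)$-coinvariants of $\pi_n(X,A,x_0)$: concretely, that $q_*$ is surjective with kernel generated by the elements $\gamma\cdot\xi-\xi$ for $\gamma\in\pi_1(A,x_0)$ and $\xi\in\pi_n(X,A,x_0)$. This demands genuine geometric input, which I would supply either via homotopy excision (the Blakers--Massey theorem) in the compressed model, or via a chain-level computation on the cover of $X$ pulled back from the universal cover of $A$. Granting this identification, the hypothesis that $\pi_1(A,x_0)$ acts trivially forces every generator $\gamma\cdot\xi-\xi$ of the kernel to vanish, so $q_*$, and hence $h$, is an isomorphism. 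That the triviality hypothesis is essential rather than cosmetic is already visible for the pair $(\mathbb{RP}^2,S^1)$, where the nontrivial $\pi_1$-action makes $\pi_2\cong\mathbb{Z}^2$ strictly larger than $H_2\cong\mathbb{Z}$; there $q_*$ collapses a genuine kernel and $h$ fails to be injective.
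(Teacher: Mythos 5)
The paper does not actually prove this statement: it is imported verbatim from \cite[Theorem 4.37]{Hatcher2002Algebraic} (in the weakened trivial-action form, as the paper's following remark makes explicit), so the only meaningful comparison is with the textbook proof. Your outline is essentially that standard argument --- CW approximation, compression of the pair so that all relative cells have dimension at least $n$, passage to the quotient $X/A \simeq X \cup CA$, the absolute Hurewicz theorem, and the identification of $\pi_n(X/A)$ with the $\pi_1(A,x_0)$-coinvariants of $\pi_n(X,A,x_0)$ --- and the reductions you perform before the crux, as well as the $(\mathbb{RP}^2, S^1)$ example showing the action hypothesis is essential, are all correct.

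However, the crux step is underpowered as you propose to establish it. The Blakers--Massey theorem in its usual form, applied to $X \cup CA$ decomposed along $A$ with $(X,A)$ being $(n-1)$-connected and $(CA,A)$ being $1$-connected, yields only that $q_*\cn \pi_i(X,A,x_0) \to \pi_i(X/A)$ is an isomorphism for $i < n$ and a \emph{surjection} for $i = n$; it says nothing about the kernel in the critical degree. The kernel description you need is the refined ``edge'' version of homotopy excision, in which the kernel in the first critical degree is generated by generalized Whitehead products $[\xi,\gamma]$ with $\gamma \in \pi_2(CA,A) \cong \pi_1(A,x_0)$, and in this bidegree such products are precisely the action differences $\gamma\cdot\xi - \xi$; that refinement is considerably less standard than excision itself and cannot be waved through by citing Blakers--Massey. (Hatcher in fact sidesteps it by a cellular computation: with relative cells in dimensions $\geq n$, the group $\pi_n(X,A)$ is generated over $\pi_1(A)$ by the $n$-cells --- Whitehead's free crossed module for $n=2$, a free $\mathbb{Z}[\pi_1(A)]$-module for $n \geq 3$ --- and comparison with the cellular chain complex gives the coinvariants statement directly.) Your alternative route is also misstated: covers pull back along maps \emph{into} the base, so the universal cover of $A$ induces no cover of $X$. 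One can instead take the universal cover $p \cn \tilde{X} \to X$ and set $\tilde{A} = p^{-1}(A)$, with $\pi_n(X,A) \cong \pi_n(\tilde{X},\tilde{A})$, but $\pi_1(\tilde{A}) \cong \ker\bigl(\pi_1(A,x_0) \to \pi_1(X,x_0)\bigr)$ need not vanish --- in your own example it is $2\mathbb{Z}$ --- so this does not reduce to the simply connected case. Finally, for $n=2$ the group $\pi_2(X,A,x_0)$ can be nonabelian, so the kernel must be described as the normal subgroup generated by the elements $(\gamma\cdot\xi)\xi^{-1}$; the Peiffer identity $\xi\eta\xi^{-1} = (\partial\xi)\cdot\eta$ shows the trivial-action hypothesis already forces abelianness, a point your additive notation silently assumes.
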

\begin{rmk}
  This result can be extended to the case when the action of
  $\pi_1(A,x_0)$ is nontrivial, and is stated as such in
  \cite{Hatcher2002Algebraic}.  We will not need that additional
  detail.
\end{rmk}

\begin{defn}
  We say that a map $f\cn X \to Y$ is a
  \emph{homology-$n$-equivalence} if $H_q(f)$ is an isomorphism for
  $q < n$ and a surjection for $q = n$.
\end{defn}

\begin{cor}[Hurewicz for maps]\label{cor:Hurewicz-for-maps}
  Let $f\cn X \to Y$ be a map of path-connected spaces.  If $f$ is an
  $n$-equivalence with $n \ge 1$ then $f$ is a
  homology-$n$-equivalence.  When $X$ and $Y$ are path-connected
  $H$-spaces and $f$ is an $H$-map, then the converse also holds.
\end{cor}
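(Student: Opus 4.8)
The plan is to replace $f$ by the inclusion $X \hookrightarrow Mf$ of $X$ into its mapping cylinder, so that $Mf \simeq Y$ and everything in sight becomes a statement about the pair $(Mf, X)$. By the characterizations in the remark following \cref{defn:n-equiv}, $f$ is an $n$-equivalence exactly when this pair is $n$-connected, i.e. $\pi_q(Mf, X) = 0$ for $0 < q \le n$; and by the long exact homology sequence of the pair, $f$ is a homology-$n$-equivalence exactly when $H_q(Mf, X) = 0$ for $q \le n$. So both directions reduce to comparing vanishing of relative homotopy with vanishing of relative homology in the range $q \le n$, and the relative Hurewicz theorem above is the bridge. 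Throughout I note that $Mf$ is path-connected since $X$ and $Y$ are.

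For the forward implication I would assume the pair is $n$-connected. Since $n \ge 1$ the pair is in particular $1$-connected, so $\pi_1(Mf, X) = 0$ and the $\pi_1$-action hypothesis of relative Hurewicz is satisfied vacuously. I would then apply that theorem with index $n+1$---legitimate because $n+1 \ge 2$ and the pair is $n$-connected, precisely the connectivity the theorem requires---to obtain isomorphisms $\pi_q(Mf, X) \cong H_q(Mf, X)$ for $q \le n+1$. In the range $q \le n$ the left-hand groups vanish, forcing $H_q(Mf, X) = 0$ there, and the homology long exact sequence then yields that $H_q(f)$ is an isomorphism for $q < n$ and a surjection for $q = n$. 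This direction uses no $H$-space structure.

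For the converse I would run the comparison backward, now knowing $H_q(Mf, X) = 0$ for $q \le n$ and wanting $\pi_q(Mf, X) = 0$ in the same range, arguing by induction that the pair is $q$-connected for successive $q$. The inductive step for $2 \le q \le n$ is routine: once the pair is $(q-1)$-connected we have $\pi_1(Mf, X) = 0$, so relative Hurewicz applies with index $q$ and gives $\pi_q(Mf, X) \cong H_q(Mf, X) = 0$, advancing the connectivity by one. The hard part will be the base case $q = 1$, where relative Hurewicz is unavailable since it demands index at least $2$. This is exactly where the $H$-space hypothesis must enter: because $X$ and $Y$ are $H$-spaces their fundamental groups are abelian, so the Hurewicz maps $\pi_1 \to H_1$ are isomorphisms, and by naturality the surjection $H_1(f)$ coming from the homology-$n$-equivalence hypothesis is identified with $\pi_1(X) \to \pi_1(Y)$. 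Surjectivity of the latter is precisely $1$-connectedness of the pair, which starts the induction and carries it up to $n$-connectedness, i.e. to $f$ being an $n$-equivalence.

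I expect the base case of the converse to be the only real obstacle. For a general map of path-connected spaces $H_1(f) = \pi_1(f)^{\mathrm{ab}}$ can be surjective while $\pi_1(f)$ is not---witness the inclusion of a point into an acyclic space with nontrivial perfect fundamental group---so no homological hypothesis alone can force $\pi_1$-surjectivity. The $H$-space assumption removes exactly this pathology by abelianizing $\pi_1$, after which relative Hurewicz propagates the comparison into all higher dimensions with no further use of the multiplicative structure.
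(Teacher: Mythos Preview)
Your overall strategy---passing to the mapping cylinder, comparing the two long exact sequences, and invoking relative Hurewicz---is exactly the paper's approach, and your forward direction is fine.

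For the converse, your base case (H-spaces have abelian $\pi_1$, so absolute Hurewicz identifies $\pi_1(f)$ with $H_1(f)$, which is surjective) is correct and in fact more explicit than what the paper writes. But your inductive step has a gap. The hypothesis in the relative Hurewicz theorem as quoted in the paper reads ``$\pi_1(A)$ acts trivially on $\pi_1(X,A)$'', but this is almost certainly a typo for $\pi_n(X,A)$; without that hypothesis the conclusion in degree $n$ is only that $H_n(X,A)$ is the $\pi_1(A)$-coinvariants of $\pi_n(X,A)$. So at stage $q$ of your induction, $H_q(Mf,X)=0$ yields only that the coinvariants of $\pi_q(Mf,X)$ vanish, which does not force $\pi_q(Mf,X)=0$ unless the $\pi_1(X)$-action is trivial. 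Your claim that ``relative Hurewicz propagates the comparison into all higher dimensions with no further use of the multiplicative structure'' is therefore not right: this is exactly where the paper invokes the $H$-\emph{map} condition (citing Strom) to obtain the trivial action, and that is what drives every step of the induction. Your base case, by contrast, uses only that $X$ and $Y$ are $H$-spaces individually---so you have located the role of the multiplicative hypothesis in the wrong place.
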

\begin{proof}\proofof{cor:Hurewicz-for-maps}
  Consider the comparison of long exact sequences below.
  The condition that $f$ is an $n$-equivalence is equivalent to the
  condition that $\pi_i(Mf,X,x_0) = 0$ for $i \leq n$.  The condition
  that $f$ is a homology $n$-equivalence is equivalent to the
  condition that $H_i(Mf,X) = 0$ for $i \leq n$.
  \[
  \begin{tikzpicture}[x=30mm,y=15mm]
    \draw[tikzob,mm] 
    (0,1) node (z1){\cdots}
    ++(.7,0) node (a1) {\pi_{i+1}(Mf,X,x_0)}
    ++(1,0) node (b1) {\pi_{i}(X,x_0)}
    ++(1,0) node (c1) {\pi_{i}(Mf,x_0)}
    ++(1,0) node (d1) {\pi_{i}(Mf,X,x_0)}
    ++(.7,0) node (w1) {\cdots}
    (0,0) node (z2) {\cdots}
    ++(.7,0) node (a2) {H_{i+1}(Mf,X)}
    ++(1,0) node (b2) {H_{i}(X)}
    ++(1,0) node (c2) {H_{i}(Mf)}
    ++(1,0) node (d2) {H_{i}(Mf,X)}
    ++(.7,0) node (w2) {\cdots}
    ;
    \path[tikzar,mm] 
    (a1) edge node {} (a2)
    (b1) edge node {} (b2)
    (c1) edge node {} (c2)
    (d1) edge node {} (d2)
    (z1) edge node {} (a1)
    (a1) edge node {} (b1)
    (b1) edge node {} (c1)
    (c1) edge node {} (d1)
    (d1) edge node {} (w1)
    (z2) edge node {} (a2)
    (a2) edge node {} (b2)
    (b2) edge node {} (c2)
    (c2) edge node {} (d2)
    (d2) edge node {} (w2)
    ;
  \end{tikzpicture}
  \]
  Therefore the first statement is a direct consequence of the
  relative Hurewicz theorem.  The second holds also by the relative
  Hurewicz theorem because the assumption that $f$ is an $H$-map
  implies that the induced action of $\pi_1(X,x_0)$ on
  $\pi_1(Mf,X,x_0)$ is trivial \cite[Section~9.2]{Str2011Modern}.
\end{proof}

\begin{proof}[Proof of \cref{prop:n-equiv}]\proofof{prop:n-equiv}
  If $f$ is an $n$-equivalence, each path component of $f$ is an
  $n$-equivalence, so by \cref{cor:Hurewicz-for-maps} each path
  component of $f$ is a homology-$n$-equivalence.  Therefore $f$
  itself is a homology-$n$-equivalence.
  This implies the group-completion, $\Om Bf$, is a
  homology-$n$-equivalence because
  on homology, the map
  \[
    H_* \Om Bf\cn H_*(X)[\pi_0(X)^{-1}] \longrightarrow H_*(Y)[\pi_0(Y)^{-1}]
  \]
  is the map induced by localizing $H_* f$ with respect to $\pi_0$,
  and localization is exact.
  Since $f$ is an $E_\infty$ map, it induces an $H$-map between the
  unit components of $\Om BX$ and $\Om BY$. By the converse part of
  \cref{cor:Hurewicz-for-maps} for the unit component of $\Om B f$,
  this unit component must be an $n$-equivalence.
  Lastly, in any
  group-complete $H$-space, translation by any point $x$ induces a
  homotopy equivalence between the basepoint component and the
  component of $x$.  Hence all of the components of are all homotopy
  equivalent, and therefore $\Om Bf$ is an $n$-equivalence.
\end{proof}

\begin{notn}
  We let $\Sp_{0}$ denote the category of connective spectra,
  i.e., the full subcategory of spectra consisting of those objects
  $X$ with $\pi_n X =0$ for all $n < 0$.
\end{notn}

\begin{defn}
  We say that a map $f\cn X \to Y$ of connective spectra is a
  \emph{stable $P_n$-equivalence} when the conditions of
  \cref{defn:P_n-equiv} hold for stable homotopy groups; i.e., $f$
  induces an isomorphism on stable homotopy groups $\pi_i$ for $0 \leq
  i \leq n$. We let $\mathit{st~P_n\mh eq}$ denote the class of stable
  $P_n$-equivalences.
\end{defn}

\section{Symmetric monoidal algebra in dimension 2}
\label{sec:algebra}

One has a number of distinct notions of symmetric monoidal algebra in
dimension 2, and it will be necessary for us to work with several of
these.  The most general form is the notion of symmetric monoidal
bicategory, and we outline essential details of this structure in
\cref{sec:SM-bicats}.  Several of our constructions make use of a
stricter notion arising as monoids in $\iicat$, and these are reviewed
in \cref{sec:pgm}.

One also has various levels of strength for morphisms, both with
respect to functoriality and with respect to the monoidal structure.
In this paper, we can work solely with those morphisms of symmetric
monoidal bicategories---either strict functors or
pseudofunctors---which preserve the symmetric monoidal structure
strictly (see \cref{defn:strict-monoidal}). In contrast with the
weakest notion of morphism, that of symmetric monoidal pseudofunctor,
these stricter variants all enjoy composition which is strictly
associative and unital.

There are many good reasons to consider versions which are stricter
than the most general possible notion. The most obvious is that the
stricter structures are easier to work with, and in this case often
allow the use of techniques from the highly-developed theory of
2-categories.  The second reason we work with a variety of stricter
notions is that many of these have equivalent \emph{homotopy theories}
to that of the fully weak version; we address this point in
\cref{sec:hty-thy-smb}. Even if some construction does not preserve a
particular strict variant of symmetric monoidal bicategory, but
outputs a different variant with the same homotopy theory, we can
still make use of the stricter setting. Finally, stricter notions
usually admit more transparent constructions; the various $K$-theory
functors for symmetric monoidal bicategories in
\cite{Oso10Spectra,GO2012Infinite,GJO2017KTheory} provide an excellent
example, with stricter variants admitting simpler $K$-theory functors.

\subsection{Background about symmetric monoidal bicategories}
\label{sec:SM-bicats}

In this section we review the minimal necessary content from the
theory of symmetric monoidal bicategories so that the reader can
understand our construction of symmetric monoidal structure from
operad actions in \cref{sec:operads}.  More complete details can be
found in
\cite{McCru00Balanced,SP2011Classification,Lac10Icons,CG2014Iterated}.

\begin{convention}
  We always use \emph{transformation} to mean pseudonatural transformation
  (which we will only indicate via components) and \emph{equivalence} to mean
  pseudonatural (adjoint) equivalence, that is, a pseudonatural
  transformation with inverse up to isomorphisms satisfying triangle identities.
\end{convention}

\begin{defn}[Sketch, see \cite{McCru00Balanced}, 
{\cite[Definition 2.3]{SP2011Classification}}, 
or \cite{CG2014Iterated}]
A \textit{symmetric monoidal bicategory} consists of
\begin{itemize}
\item a bicategory $\cB$,
\item a tensor product pseudofunctor $\cB \times \cB \to \cB$, denoted by
concatenation,
\item a unit object $e \in \ob \cB$,
\item an associativity equivalence $\al\colon (xy)z \simeq x(yz)$,
\item unit equivalences $l\colon ex \simeq x$ and $r\colon x \simeq
xe$,
\item invertible modifications $\pi$, $\mu$, $\la$, $\rho$ as follows, \[
  \begin{tikzpicture}[x=25mm,y=20mm]
    \draw[tikzob,mm] 
    (0,0) node (0) {((xy)z)w}
    (1,1) node (1) {(x(yz))w}
    (3,1) node (2) {x((yz)w)}
    (4,0) node (3) {x(y(zw))}
    (2,-1) node (4) {(xy)(zw)};
    \path[tikzar,mm] 
    (0) edge node {\al \,\id} (1)
    (1) edge node {\al} (2)
    (2) edge node {\id\,\al} (3)
    (0) edge[swap] node {\al} (4)
    (4) edge[swap] node {\al} (3);
    \draw[tikzob,mm]
    (2,0) node[rotate=270,font=\Large] {\Rightarrow}
    ++(3mm,0mm) node {\pi};
  \end{tikzpicture}
  \]
    \[
  \begin{tikzpicture}[x=25mm,y=20mm]
    \draw[tikzob,mm] 
    (0,0) node (0) {(xe)y}
    (1,0) node (1) {x(ey)}
    (-.5,-1) node (2) {xy}
    (1.5,-1) node (3) {xy};
    \path[tikzar,mm] 
    (0) edge node {\al} (1)
    (2) edge node {r\,\id} (0)
    (2) edge[swap] node {\id} (3)
    (1) edge node {\id \, l} (3);
    \draw[tikzob,mm]
    (0.5,-0.5) node[rotate=270,font=\Large] {\Rightarrow}
    ++(3mm,0mm) node {\mu};
  \end{tikzpicture}
  \]
    \[
  \begin{tikzpicture}[x=45mm,y=20mm]
    \draw[tikzob,mm] 
    (0,0) node (0) {(ex)y}
    (1,0) node (1) {xy}
    (.5,-1) node (2) {e(xy)};
    \path[tikzar,mm] 
    (0) edge node {l\,\id} (1)
    (0) edge[swap] node {\al} (2)
    (2) edge[swap] node {l} (1);
    \draw[tikzob,mm]
    (0.5,-0.5) node[rotate=270,font=\Large] {\Rightarrow}
    ++(3mm,0mm) node {\la};
  \end{tikzpicture}
  \qquad
    \begin{tikzpicture}[x=45mm,y=20mm]
    \draw[tikzob,mm] 
    (0,0) node (0) {xy}
    (1,0) node (1) {x(ye)}
    (.5,-1) node (2) {(xy)e};
    \path[tikzar,mm] 
    (0) edge node {\id\, r} (1)
    (0) edge[swap] node {r} (2)
    (2) edge[swap] node {\al} (1);
    \draw[tikzob,mm]
    (0.5,-0.5) node[rotate=270,font=\Large] {\Rightarrow}
    ++(3mm,0mm) node {\rho};
  \end{tikzpicture}
  \]
\item a braid equivalence $\beta\colon xy \simeq yx$,
\item two invertible modifications (denoted $R_{-|--},$ $R_{--|-}$)
  which correspond to two instances of the third Reidemeister move,
    \[
  \begin{tikzpicture}[x=25mm,y=20mm]
    \draw[tikzob,mm] 
    (0,0) node (0) {(xy)z}
    (1,1) node (1) {(yx)z}
    (2,1) node (2) {y(xz)}
    (3,0) node (3) {y(zx)}
    (1,-1) node (4) {x(yz)}
    (2,-1) node (5) {(yz)x};
    \path[tikzar,mm] 
    (0) edge node {\be \,\id} (1)
    (1) edge node {\al} (2)
    (2) edge node {\id\,\be} (3)
    (0) edge[swap] node {\al} (4)
    (4) edge[swap] node {\be} (5)
    (5) edge[swap] node {\al} (3);
    \draw[tikzob,mm]
    (1.5,0) node[rotate=270,font=\Large] {\Rightarrow}
    ++(6mm,0mm) node {R_{x \mid yz}};
  \end{tikzpicture}
  \]
      \[
  \begin{tikzpicture}[x=25mm,y=20mm]
    \draw[tikzob,mm] 
    (0,0) node (0) {x(yz)}
    (1,1) node (1) {x(zy)}
    (2,1) node (2) {(xz)y}
    (3,0) node (3) {(zx)y}
    (1,-1) node (4) {(xy)z}
    (2,-1) node (5) {z(xy)};
    \path[tikzar,mm] 
    (0) edge node {\id\,\be} (1)
    (1) edge node {\al^\bullet} (2)
    (2) edge node {\be\,\id} (3)
    (0) edge[swap] node {\al^\bullet} (4)
    (4) edge[swap] node {\be} (5)
    (5) edge[swap] node {\al^\bullet} (3);
    \draw[tikzob,mm]
    (1.5,0) node[rotate=270,font=\Large] {\Rightarrow}
    ++(6mm,0mm) node {R_{xy\mid z}};
  \end{tikzpicture}
  \]
\item and an invertible modification (the syllepsis, $v$)
    \[
  \begin{tikzpicture}[x=60mm,y=20mm]
    \draw[tikzob,mm] 
    (0,-1) node (0) {xy}
    (1,-1) node (1) {xy}
    (.5,0) node (2) {yx};
    \path[tikzar,mm] 
    (0) edge[swap] node {\id} (1)
    (0) edge node {\be} (2)
    (2) edge node {\be} (1);
    \draw[tikzob,mm]
    (0.5,-0.5) node[rotate=270,font=\Large] {\Rightarrow}
    ++(3mm,0mm) node {v};
  \end{tikzpicture}
  \]
\end{itemize}
satisfying three axioms for the monoidal structure, four axioms for
the braided structure, two axioms for the sylleptic structure, and one
final axiom for the symmetric structure.
\end{defn}

\begin{defn}[Sketch, see {\cite[Definition 2.5]{SP2011Classification}}]\label{defn:symm-mon-psfun}
A \textit{symmetric monoidal pseudofunctor} $F\colon\cB \to \cC$
consists
  of
  \begin{itemize}
  \item a pseudofunctor $F\colon \cB \to \cC$,
  \item a unit equivalence $e_{\cC} \simeq F(e_{\cB})$,
  \item an equivalence for the tensor product $Fx Fy \simeq F(xy)$,
  \item three invertible modifications between composites of the unit
    and tensor product equivalences, and
  \item an invertible modification comparing the braidings in
    $\cB$ and $\cC$
  \end{itemize}
  satisfying two axioms for the monoidal structure, two axioms for the
  braided structure, and one axiom for the symmetric (and hence
  subsuming the sylleptic) structure.
\end{defn}

\begin{defn}[Sketch, see {\cite[Definition 2.7]{SP2011Classification}}]
  A \textit{symmetric monoidal transformation} $\eta\colon F \rtarr G$
  consists of
  \begin{itemize}
  \item a transformation $\eta \colon F \rtarr G$, and
  \item two invertible modifications concerning the interaction
    between $\eta$ and the unit objects on the one hand and the tensor
    products on the other
  \end{itemize}
  satisfying two axioms for the monoidal structure and one axiom for
  the symmetric structure (and hence subsuming the braided and
  sylleptic structures).
\end{defn}

The following is verified in \cite{SP2011Classification}.  Note that we
have not defined symmetric monoidal modifications as we will not have
any reason to use them in any of our constructions.

\begin{lem}
There is a tricategory $\mathsf{SMB}$ of symmetric monoidal
bicategories, symmetric
  monoidal pseudofunctors, symmetric monoidal transformations, and
  symmetric monoidal modifications.
\end{lem}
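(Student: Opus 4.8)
The plan is to assemble $\mathsf{SMB}$ in the standard layered fashion, taking as a scaffold the well-known tricategory $\mathsf{Bicat}$ of bicategories, pseudofunctors, transformations, and modifications, and upgrading each layer of its structure to carry the symmetric monoidal decorations. Conceptually, a symmetric monoidal bicategory is a symmetric pseudomonoid in $\mathsf{Bicat}$ equipped with its cartesian symmetric monoidal structure, so one expects the tricategory of such pseudomonoids to exist on general grounds; absent a fully developed theory of symmetric pseudomonoids in a symmetric monoidal tricategory, however, I would carry out the construction by hand, following \cite{SP2011Classification}.

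First I would fix the local structure: for each pair $\cB, \cC$ of symmetric monoidal bicategories, build a bicategory $\mathsf{SMB}(\cB,\cC)$ whose objects are symmetric monoidal pseudofunctors, whose $1$-cells are symmetric monoidal transformations, and whose $2$-cells are symmetric monoidal modifications. The underlying data already form a bicategory inside $\mathsf{Bicat}$, so the content is to check that the extra modifications carried by a symmetric monoidal transformation (those governing its interaction with the unit and the tensor) compose and whisker coherently, and that the monoidal, braided, and symmetric axioms are preserved under vertical and horizontal composition. This is a finite, if lengthy, pasting verification.

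Next I would supply the composition pseudofunctors $\mathsf{SMB}(\cC,\cD) \times \mathsf{SMB}(\cB,\cC) \to \mathsf{SMB}(\cB,\cD)$ and the unit $1$-cells. The key point is that the composite of two symmetric monoidal pseudofunctors inherits a canonical symmetric monoidal structure: its unit and tensor constraints are the pasted composites of the constraints of the factors, and its coherence modifications are built by whiskering and pasting those of the factors against the tensor product pseudofunctors, as in \cref{defn:symm-mon-psfun}. One then checks that these composite data satisfy the two monoidal, two braided, and one symmetric axioms required of a symmetric monoidal pseudofunctor, and that the assignment is functorial on transformations and modifications up to the coherent constraints inherited from $\mathsf{Bicat}$.

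Finally I would produce the tricategorical coherence data---the associativity and unit adjoint equivalences $\mathbf{a}$, $\mathbf{l}$, $\mathbf{r}$ together with the invertible modifications furnishing their coherence---by lifting the corresponding data of $\mathsf{Bicat}$ to the symmetric monoidal setting, and then verify the three tricategory axioms. The hard part will be precisely this last verification together with the preservation-of-axioms checks in the composition step: the genuinely new content beyond $\mathsf{Bicat}$ lives in confirming that the symmetric-monoidal modification components of all the structure satisfy the requisite pasting equalities. Because the underlying $1$-, $2$-, and $3$-cell data descend from $\mathsf{Bicat}$, where the tricategory axioms already hold, each remaining axiom reduces to an equality of pasting diagrams among the symmetric-monoidal constraint modifications; assembling and checking these is the bulk of the argument, and is the route taken in \cite{SP2011Classification}.
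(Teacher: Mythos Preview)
Your proposal is correct and aligns with the paper's approach: the paper does not give its own proof of this lemma but simply states that it ``is verified in \cite{SP2011Classification},'' and your outline is precisely a sketch of the Schommer-Pries construction you cite. There is nothing to add.
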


We will need to know when symmetric monoidal pseudofunctors or
transformations are invertible in the appropriate sense.

\begin{defn}
  A \textit{symmetric monoidal biequivalence} $F\colon \cB \to \cC$ is
  a symmetric monoidal pseudofunctor such that the underlying
  pseudofunctor $F$ is a biequivalence of bicategories, i.e., has an
  inverse up to pseudonatural equivalence.
\end{defn}

\begin{defn}
  \label{defn:symm-mon-equiv}
A \textit{symmetric monoidal equivalence} $\eta \colon F \rtarr G$
between
  symmetric monoidal pseudofunctors is a symmetric monoidal
  transformation $\eta \colon F \rtarr G$ such that the underlying
  transformation $\eta$ is an equivalence.  This is logically
  equivalent to the condition that each component 1-cell
  $\eta_{b}\colon Fb \to Gb$ is an equivalence 1-cell in $\cC$.
\end{defn}

The results of \cite{Gur2012Biequivalences} can be used to easily prove
the following lemma, although the first part is also verified by
elementary means in \cite{SP2011Classification}.

\begin{lem}
  Let $F,G\colon \cB \to \cC$ be symmetric monoidal pseudofunctors,
  and $\eta\colon F \rtarr G$ a symmetric monoidal transformation
  between them.
  \begin{itemize}
  \item $F\colon \cB \to \cC$ is a symmetric monoidal biequivalence if
    and only if it is an internal biequivalence in the tricategory
    $\mathsf{SMB}$.
  \item $\eta\colon F \rtarr G$ is a symmetric monoidal equivalence if
    and only if it is an internal equivalence in the bicategory
    $\mathsf{SMB}(\cB,\cC)$.
  \end{itemize}
\end{lem}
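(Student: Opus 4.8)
The plan is to measure both statements against the structure-forgetting trihomomorphism $U\cn \mathsf{SMB} \to \mathsf{Bicat}$, where $\mathsf{Bicat}$ is the tricategory of bicategories, pseudofunctors, transformations, and modifications, together with its induced homomorphisms of hom-bicategories $U_{\cB,\cC}\cn \mathsf{SMB}(\cB,\cC) \to \mathsf{Bicat}(\cB,\cC)$. Each biconditional then splits into an easy direction and a substantive one. For the easy directions I would use that any trihomomorphism carries internal biequivalences to internal biequivalences and any homomorphism of bicategories carries internal equivalences to internal equivalences, since all the defining data---a weak inverse, the exhibiting equivalences, and the invertible modifications---are simply transported along $U$ (respectively $U_{\cB,\cC}$). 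Thus if $F$ is an internal biequivalence in $\mathsf{SMB}$ then $UF$ is a biequivalence of bicategories, i.e.\ $F$ is a symmetric monoidal biequivalence; and if $\eta$ is an internal equivalence in $\mathsf{SMB}(\cB,\cC)$ then $U_{\cB,\cC}\eta$ is an internal equivalence in $\mathsf{Bicat}(\cB,\cC)$, which by \cref{defn:symm-mon-equiv} is precisely the condition that $\eta$ be a symmetric monoidal equivalence.

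For the substantive direction of the first statement, suppose the underlying pseudofunctor $F$ is a biequivalence of bicategories. The relevant input from \cite{Gur2012Biequivalences} is that in any tricategory a 1-cell admitting a weak inverse---a 1-cell $G$ together with equivalences $GF \simeq \idop$ and $FG \simeq \idop$, with no further coherence imposed---is automatically an internal biequivalence. It therefore suffices to produce such a weak inverse inside $\mathsf{SMB}$. Starting from a pseudo-inverse $G_0$ of the underlying biequivalence, I would equip $G_0$ with a symmetric monoidal structure by transport of structure along the equivalences $FG_0 \simeq \idop$ and $G_0 F \simeq \idop$, using the symmetric monoidal structure already carried by $F$; this upgrades $G_0$ to a symmetric monoidal pseudofunctor $G$ and promotes the two exhibiting equivalences to symmetric monoidal equivalences. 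Gurski's result then completes this weak inverse to full internal-biequivalence data in $\mathsf{SMB}$, so that none of the higher coherence axioms need separate verification.

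The second statement is handled entirely inside the bicategory $\mathsf{SMB}(\cB,\cC)$, where an internal equivalence is exactly a 1-cell admitting a weak inverse. If the underlying transformation $\eta$ is an equivalence, I would choose an underlying pseudo-inverse $\bar\eta$, then use transport of structure to endow $\bar\eta$ with a symmetric monoidal structure and to turn the two invertible modifications exhibiting $\eta$ and $\bar\eta$ as mutual inverses into symmetric monoidal modifications, so that $\eta$ becomes an internal equivalence in $\mathsf{SMB}(\cB,\cC)$; the requisite promotion of a weak inverse to an adjoint equivalence is again covered by the techniques of \cite{Gur2012Biequivalences}. The main obstacle in both statements is the same transport-of-structure step: endowing the chosen underlying inverse with a compatible symmetric monoidal structure and checking that the comparison cells respect the symmetric monoidal data. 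The point of invoking \cite{Gur2012Biequivalences} is precisely that, once a weak symmetric monoidal inverse is in hand, all remaining tricategorical (respectively bicategorical) coherence is automatic, so no axiom-by-axiom verification is needed.
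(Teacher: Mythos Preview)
Your proposal is correct and matches the approach the paper points to. The paper itself gives no proof beyond the sentence ``The results of \cite{Gur2012Biequivalences} can be used to easily prove the following lemma, although the first part is also verified by elementary means in \cite{SP2011Classification}.'' Your sketch fleshes this out in exactly the intended way: the forward implications are formal (the forgetful trihomomorphism to $\mathsf{Bicat}$ preserves internal biequivalences and equivalences), while the reverse implications proceed by transporting the symmetric monoidal structure along the underlying pseudo-inverse and then invoking Gurski's completion theorem to upgrade a weak inverse to a genuine internal biequivalence (respectively, adjoint equivalence) without axiom-by-axiom checks. The transport-of-structure step you flag as the main obstacle is precisely what Schommer-Pries carries out elementarily for the first part, so your identification of where the work lies is accurate.
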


We have defined a symmetric monoidal biequivalence to be a symmetric
monoidal pseudofunctor which is also a biequivalence. The content of this
lemma is that the weak inverse can also be chosen to be symmetric
monoidal, as well as all the accompanying transformations and
modifications.

\begin{defn}\label{defn:HoSMB}
  Let $\Ho \SMB$ denote the category of symmetric monoidal
  bicategories with morphisms given by equivalence classes of
  symmetric monoidal pseudofunctors under the relation given by
  symmetric monoidal pseudonatural equivalence. Note that in this
  category, every symmetric monoidal biequivalence is an isomorphism.
\end{defn}

\begin{defn}\label{defn:strict-monoidal}
  A \textit{strictly symmetric monoidal pseudofunctor}
  $F\colon \cB \to \cC$ between symmetric monoidal bicategories is a
  pseudofunctor of the underlying bicategories that preserves the
  symmetric monoidal structure strictly, and for which all of the
  constraints are either the identity (when this makes sense) or the
  unique coherence isomorphism obtained from the coherence theorem for
  pseudofunctors \cite{JS1993btc,Gurski13Coherence}.  A \emph{strict
    functor} is a strictly symmetric monoidal pseudofunctor for which
  the underlying pseudofunctor is strict.
\end{defn}

\begin{rmk}
  There is a monad on the category of 2-globular sets whose algebras
  are symmetric monoidal bicategories.  Strict functors can then be
  identified with the morphisms in the Eilenberg-Moore category for
  this monad, and in particular symmetric monoidal bicategories with
  strict functors form a category.  This point of view is crucial to
  the methods employed in \cite{SP2011Classification}.
\end{rmk}

\begin{notn}
  The principal variants we will use are listed below.
  \begin{itemize}
  \item We let $\SMBicat_{ps}$ denote the category of symmetric
    monoidal bicategories and strictly symmetric monoidal
    pseudofunctors.  Note that the composition of these is given by
    the composite of the underlying pseudofunctors and then the unique
    choice of coherence cells making them strictly symmetric.
  \item We let $\SMBicat_{s}$ denote the subcategory of
    $\SMBicat_{ps}$ whose morphisms are strict functors.
  \item We let $\SMIICat_{ps}$, respectively $\SMIICat_{s}$,
    denote the full subcategories of $\SMBicat_{ps}$, respectively
    $\SMBicat_{s}$, with objects whose underlying bicategory is a
    2-category.
\end{itemize}
\end{notn}

We note two subtleties regarding subcategories of strict functors.
The first is that the inverse of a strictly symmetric monoidal strict
biequivalence is not necessarily itself strict.  However, we will see
in \cref{cor:smbs=smbps} that the homotopy category obtained by
inverting strict biequivalences in $\SMBicat_{s}$ is equivalent to
$\Ho \SMB$.

Second, note that the multiplication map of a symmetric monoidal
bicategory or 2-category $\cA$ is a pseudofunctor
\[
  \cA \times \cA \to \cA.
\]
In both $\SMBicat_{s}$ and $\SMIICat_{s}$, we consider strictly
functorial morphisms which commute strictly with this multiplication
pseudofunctor. The work in \cite{GJO2017KTheory} shows that, relative
to all stable equivalences, it is possible to restrict the structure
further and still represent every stable homotopy type.  Relative only
to the categorical equivalences, however, we must retain some
pseudofunctoriality in the multiplication.

\subsection{Background on permutative Gray-monoids}
\label{sec:pgm}

In this section we give a definition that is a semi-strict version of
symmetric monoidal bicategories. Here too we give the minimal
necessary background for our current work.  For details, see
\cite{Gra74Formal,GPS95Coherence,Gurski13Coherence}, or \cite[Section 3]{GJO2017KTheory}.

\begin{defn}\label{defn:graytensor}
  Let $\cA, \cB$ be 2-categories.  The \emph{Gray tensor product} of $\cA$
  and $\cB$, written $\cA \otimes \cB$ is the 2-category given by
  \begin{itemize}
  \item 0-cells consisting of pairs $a \otimes b$ with $a$ an object
    of $\cA$ and $b$ an object of $\cB$;
  \item 1-cells generated under composition by basic 1-cells of the
    form $f \otimes 1: a \otimes b \to a' \otimes b$ for $f:a \to a'$
    in $\cA$ and $1 \otimes g: a \otimes b \to a \otimes b'$ for
    $g:b \to b'$ in $\cB$; and
  \item 2-cells generated by basic 2-cells of the form
    $\al \otimes 1$ for 2-cells $\al$ in $\cA$; $1 \otimes \de$ for
    2-cells $\de$ in $\cB$; and new 2-cells
    $\Si_{f,g}: (f \otimes 1)(1 \otimes g) \cong (1 \otimes g)(f
    \otimes 1)$.
  \end{itemize}
  These cells satisfy axioms related to composition, naturality and
  bilinearity; for a complete list, see
  \cite[Section 3.1]{Gurski13Coherence} or
  \cite[Definition 3.16]{GJO2017KTheory}.
\end{defn}

The assignment $(\cA,\cB)\mapsto \cA \otimes \cB$ extends to a functor
of categories
\[
\IICat \times \IICat \rtarr \IICat
\]
which defines a symmetric monoidal structure on $\IICat$. The unit for
this monoidal structure is the terminal 2-category.
The Gray tensor product has a universal property that relates it to
the notion of cubical functor.

\begin{defn}\label{defn:cubical}
  Let $\cA_1$, $\cA_2$ and $\cB$ be 2-categories. A \emph{cubical
    functor} $F\colon \cA_1 \times \cA_2 \rtarr \cB$ is a normal
  pseudofunctor such that for all composable pairs $(f_1,f_2)$,
  $(g_1,g_2)$ of 1-cells in $\cA_1\times\cA_2$, the comparison 2-cell
  \[ 
    \phi \colon F(f_1,f_2)\circ F(g_1,g_2) \Rightarrow F(f_1\circ g_1,f_2\circ g_2)
  \]
  is the identity whenever either $f_1$ or $g_2$ is the identity.
\end{defn}

\begin{thm}[{\cite[Theorem 3.7]{Gurski13Coherence}, \cite[Theorem 3.21]{GJO2017KTheory}}]
  \label{thm:cubical-Gray}
  Let $\cA$, $\cB$ and $\cC$ be 2-categories. There is a cubical
  functor \[ c\colon \cA \times \cB \rtarr \cA\otimes\cB
  \]
  natural in $\cA$ and $\cB$, such that composition with $c$ induces a
  bijection between cubical functors $\cA\times \cB \rtarr \cC$ and
  2-functors $\cA\otimes \cB \rtarr \cC$.
\end{thm}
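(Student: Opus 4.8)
The plan is to construct the cubical functor $c$ explicitly and then exploit the fact that $\cA \otimes \cB$ is presented by generators and relations, so that a 2-functor out of it is \emph{exactly} an assignment on basic cells respecting the Gray-tensor axioms. On objects I set $c(a,b) = a \otimes b$; on a 1-cell $(f,g)$ I set $c(f,g) = (1 \otimes g)(f \otimes 1)$, so that $c(f,1) = f \otimes 1$ and $c(1,g) = 1 \otimes g$; and on a 2-cell $(\alpha,\delta)$ the evident whiskered composite of $\alpha \otimes 1$ and $1 \otimes \delta$. Normality is immediate. For the pseudofunctor comparison on a composable pair $(g_1,g_2)$ followed by $(f_1,f_2)$, the only nontrivial interchange occurs in the middle factor $(f_1 \otimes 1)(1 \otimes g_2)$, and I take $\phi$ to be the whiskering of $\Si_{f_1,g_2}$; after this swap the 2-functoriality of $-\otimes 1$ and $1 \otimes -$ collapses the expression to $c(f_1 g_1, f_2 g_2)$. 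Since $\Si_{f,g}$ is the identity whenever $f$ or $g$ is an identity, this $\phi$ is the identity exactly when $f_1 = 1$ or $g_2 = 1$, so $c$ is cubical, and the pseudofunctor axioms for $c$ reduce to the naturality and bilinearity axioms governing $\Si$.

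For the bijection, I would show that precomposition with $c$ is inverted by an explicit construction $F \mapsto \hat F$. Given a cubical $F \colon \cA \times \cB \to \cC$, I define $\hat F$ on generators by $\hat F(a \otimes b) = F(a,b)$, $\hat F(f \otimes 1) = F(f,1)$, $\hat F(1 \otimes g) = F(1,g)$, $\hat F(\alpha \otimes 1) = F(\alpha,1)$, $\hat F(1 \otimes \delta) = F(1,\delta)$, and $\hat F(\Si_{f,g}) = \phi_{(f,1),(1,g)}$, the comparison 2-cell of $F$ for composing $(1,g)$ and then $(f,1)$. The point that makes this coherent is that the cubical condition forces the comparison for the \emph{opposite} order, $F(1,g)F(f,1) \Rightarrow F(f,g)$, to be the identity, since its first coordinate is $1$; thus $F(1,g)F(f,1) = F(f,g)$ strictly, and $\hat F(\Si_{f,g})$ is the unique 2-cell $F(f,1)F(1,g) \Rightarrow F(1,g)F(f,1)$ supplied by $F$. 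One then checks $\hat F \circ c = F$: on 1-cells $\hat F c(f,g) = F(1,g)F(f,1) = F(f,g)$, and the comparison of $\hat F \circ c$ recovers that of $F$ after applying the associativity coherence of $F$ to the whiskered $\Si$.

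The substance of the argument — and the step I expect to be the main obstacle — is verifying that $\hat F$ respects every relation in the presentation of $\cA \otimes \cB$, so that it is a genuine 2-functor and not merely an assignment on generators. Each Gray-tensor axiom (naturality of $\Si$ in its two variables, the two bilinearity axioms, and the axioms relating $\Si$ to composition of 1-cells) must be matched against an instance of the pseudofunctor coherence axioms for $F$, using the cubical condition repeatedly to discard the comparison cells that have become identities. This is a finite but intricate diagram chase; once it is complete, injectivity of $(-)\circ c$ follows because $c$ hits every generator (objects, both families of basic 1- and 2-cells, and $\Si$ as its own comparison), so a 2-functor out of $\cA \otimes \cB$ is determined by its restriction along $c$, while surjectivity is exactly the construction $F \mapsto \hat F$. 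Naturality of $c$ in $\cA$ and $\cB$ is then routine: for 2-functors $A \colon \cA \to \cA'$ and $B \colon \cB \to \cB'$, both $c'\circ(A \times B)$ and $(A \otimes B)\circ c$ send basic cells to the same generators, so they agree.
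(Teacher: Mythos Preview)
The paper does not supply its own proof of this theorem; it is quoted as background with citations to \cite[Theorem 3.7]{Gurski13Coherence} and \cite[Theorem 3.21]{GJO2017KTheory}, so there is nothing in the present paper to compare against. Your outline is the standard argument found in those references: construct $c$ explicitly on basic cells, take the pseudofunctor comparison to be a whiskered $\Sigma$, and then use the generators-and-relations presentation of $\cA \otimes \cB$ to build the inverse $F \mapsto \hat F$ and check that the Gray-tensor axioms correspond exactly to the pseudofunctor coherence axioms specialized by the cubical condition. The sketch is correct and you have correctly identified the only real work, namely matching each relation in the presentation of $\cA \otimes \cB$ to a coherence equation for $F$.
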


\begin{rmk}\label{rem:id-is-lax-monoidal}
  There exists a 2-functor
  $i\cn \cA \otimes \cB \rtarr \cA \times \cB$ natural in $\cA$ and
  $\cB$ such that $i\circ c=\id$ and $c\circ i \cong \id$ (see
  \cite[Corollary 3.22]{Gurski13Coherence}). This map makes the
  identity functor $\Id$ on $\iicat$ a lax symmetric monoidal functor
  \[
    (\iicat, \times) \rtarr (\iicat,\otimes)
  \]
  with the constraint
  \[
  \Id(\cA) \otimes \Id(\cB) \to \Id(\cA \times \cB)
  \]
  given by $i$.  Similarly, $c$ gives the constraint that makes the
  inclusion
  \[
    (\iicat,\otimes) \rtarr (\iicatps,\times)
  \]
  into a lax symmetric monoidal functor \cite{Gur2013monoidal}.
\end{rmk}

\begin{defn}
  \label{defn:gray-monoid}
  A \emph{Gray-monoid} is a monoid object in $(\IICat,\otimes)$.  This
  consists of a 2-category $\cC$, a 2-functor
  \[
  \oplus\cn \cC \otimes \cC \to \cC,
  \]
  and an object $e$ of $\cC$ satisfying associativity and unit
  axioms.
\end{defn}

Via the bijection in \cref{thm:cubical-Gray}, we can view a
Gray-monoid as a particular type of monoidal bicategory such that the
monoidal product is a cubical functor and all the other coherence 
cells are identities \cite[Theorem 8.12]{Gurski13Coherence}.

\begin{defn}\label{defn:pgm}
  A \textit{permutative Gray-monoid} $\cC$ consists of a Gray-monoid
  $(\cC, \oplus, e)$ together with a 2-natural isomorphism,
  \[
    \begin{tikzpicture}[x=1mm,y=1mm]
    \draw[tikzob,mm] 
    (0,0) node (00) {\cC \otimes \cC}
    (25,0) node (10) {\cC \otimes \cC}
    (12.5,-10) node (01) {\cC}
    ;
    \path[tikzar,mm] 
    (00) edge node {\tau} (10)
    (10) edge node {\oplus} (01)
    (00) edge[swap] node {\oplus} (01)
    ;
    \draw[tikzob,mm]
    (12.5,-4) node {\Anglearrow{40} \beta}
    ;
  \end{tikzpicture}
  \]
  where $\tau \cn \cC \otimes \cC \to \cC \otimes \cC$ is the symmetry
  isomorphism in $\IICat$ for the Gray tensor product, such that the
  following axioms hold.
  \begin{itemize}
  \item The following pasting diagram is equal to the identity
    2-natural transformation for the 2-functor $\oplus$.
    \[
    \begin{tikzpicture}[x=1mm,y=1mm]
    \draw[tikzob,mm] 
    (0,0) node (00) {\cC \otimes \cC}
    (25,0) node (10) {\cC \otimes \cC}
    (50,0) node (20) {\cC \otimes \cC}
    (25,-15) node (11) {\cC}
    ;
    \path[tikzar,mm] 
    (00) edge node{\tau} (10)
    (10) edge node{\tau} (20)
    (00) edge[swap] node{\oplus} (11)
    (10) edge[swap] node{\oplus} (11)
    (20) edge node{\oplus} (11)
    (00) edge[bend left] node{\id} (20)
    ;
    \draw[tikzob,mm] 
    (14.5,-4) node {\scriptstyle \Anglearrow{40} \beta}
    (35.5,-4) node {\scriptstyle \Rightarrow \beta}    
    ;
    \end{tikzpicture}
    \]

  \item The following equality of pasting diagrams holds where we have
    abbreviated the tensor product to concatenation when labeling 1-
    or 2-cells.
    \[
    \begin{tikzpicture}[x=.95mm,y=1mm]
    \draw[tikzob,mm] 
    (3,-10) node (00) {\cC^{\otimes 3}}
    (18,0) node (10) {\cC^{\otimes 3}}
    (36,0) node (20) {\cC^{\otimes 3}}
    (51,-10) node (30) {\cC^{\otimes 2}}
    (27,-15) node (11) {\cC^{\otimes 2}}
    (18,-30) node (12) {\cC^{\otimes 2}}
    (36,-30) node (33) {\cC}
    (69,-10) node (40) {\cC^{\otimes 3}}
    (84,0) node (50) {\cC^{\otimes 3}}
    (102,0) node (60) {\cC^{\otimes 3}}
    (117,-10) node (70) {\cC^{\otimes 2}}
    (84,-30) node (52) {\cC^{\otimes 2}}
    (102,-30) node (73) {\cC}
    (102,-17) node (63) {\cC^{\otimes 2}}
    ;
    \path[tikzar,mm] 
    (00) edge node{\scriptstyle \tau \id} (10)
    (40) edge node{\scriptstyle \tau \id} (50)
    (10) edge node{\scriptstyle \id \tau } (20)
    (50) edge node{\scriptstyle \id \tau } (60)
    (20) edge node{\scriptstyle \oplus \id} (30)
    (60) edge node{\scriptstyle \oplus \id} (70)
    (30) edge node{\scriptstyle \oplus} (33)
    (70) edge node{\scriptstyle \oplus} (73)
    (00) edge[swap] node{\scriptstyle \oplus \id} (12)
    (40) edge[swap] node{\scriptstyle \oplus \id} (52)
    (12) edge[swap] node{\scriptstyle \oplus} (33)
    (52) edge[swap] node{\scriptstyle \oplus} (73)
    (00) edge[swap] node{\scriptstyle \id \oplus} (11)
    (11) edge node{\scriptstyle \tau} (30)
    (11) edge[swap] node{\scriptstyle \oplus} (33)
    (50) edge node{\scriptstyle \oplus \id} (52)
    (50) edge[swap] node{\scriptstyle \id \oplus} (63)
    (60) edge node{\scriptstyle \id \oplus} (63)
    (63) edge node{\scriptstyle \oplus} (73)
    ;
    \draw[tikzob,mm] 
    (27,-7.5) node {=}
    (19,-21) node {=}
    (108,-12) node {=}
    (93,-20) node {=}
    (59,-15) node {=}
    (37,-19) node {\scriptstyle \Anglearrow{40} \beta}
    (78.4,-12.5) node {\scriptstyle \Anglearrow{40} \beta \id}
    (96,-5) node {\scriptstyle \Anglearrow{40} \id \beta}
    ;
    \end{tikzpicture}
    \]
  \end{itemize}
\end{defn}

\begin{rmk}
  In \cite{GJO2017KTheory,GJOS2017Postnikov} the definition of
  permutative Gray-monoid includes a third axiom relating $\be$ to the
  unit $e$. This axiom is implied by the other two axioms and is
  therefore unnecessary.
\end{rmk}

\begin{defn}
  \label{defn:strict-functor-gray-mon}
  A \textit{strict functor} $F:\cC \to \cD$ of permutative
  Gray-monoids is a 2-functor $F:\cC \to \cD$ of the underlying
  2-categories satisfying the following conditions.
  \begin{itemize}
  \item $F(e_\cC) = e_\cD$, so that $F$ strictly preserves the unit
    object.
  \item The diagram
    \[
    \begin{tikzpicture}[x=1mm,y=1mm]
    \draw[tikzob,mm] 
    (0,0) node (00) {\cC \otimes \cC}
    (30,0) node (10) {\cD \otimes \cD}
    (0,-15) node (01) {\cC}
    (30,-15) node (11) {\cD}
    ;
    \path[tikzar,mm] 
    (00) edge node{F \otimes F} (10)
    (10) edge node{\oplus_{\cD}} (11)
    (00) edge[swap] node{\oplus_{\cC}} (01)
    (01) edge[swap] node{F} (11)    
    ;
    \end{tikzpicture}
    \]
    commutes, so that $F$ strictly preserves the sum.
  \item The equation
    \[
    \beta^{\cD} * (F \otimes F) = F * \beta^{\cC}
    \]
    holds, so that $F$ strictly preserves the symmetry.  This equation
    is equivalent to requiring that
    \[
    \beta^{\cD}_{Fx,Fy} = F(\beta^{\cC}_{x,y})
    \]
    as 1-cells from $Fx \oplus Fy = F(x \oplus y)$ to
    $Fy \oplus Fx = F(y \oplus x)$.
  \end{itemize}
\end{defn}

\begin{notn}\label{notn:PGM}
  The category of permutative Gray-monoids, $\PGM$, is the full
  subcategory of $\SMIICat_{s}$ whose objects are permutative
  Gray-monoids.
\end{notn}

The following two results follow from straightforward calculations and
are used in \cref{sec:choice-of-mult}.
\begin{prop}
  \label{prop:defn-PGM}
  The underlying 2-category functor $\PGM \to \IICat$ is monadic in
  the usual, 1-categorical sense.
\end{prop}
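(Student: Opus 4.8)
The plan is to apply Beck's monadicity theorem to the forgetful functor $U\cn \PGM \to \IICat$, so that $\PGM$ is realized as the Eilenberg--Moore category of the induced monad. It suffices to check three things: that $U$ has a left adjoint, that $U$ is conservative, and that $U$ creates coequalizers of those parallel pairs whose image under $U$ admits a split coequalizer. Conservativity is immediate: if a strict functor $\Phi$ of permutative Gray-monoids has invertible underlying 2-functor $\Psi = \Phi^{-1}$, then using functoriality of $\otimes$ one computes $\oplus\,(\Psi \otimes \Psi) = \Psi\,\oplus$, and likewise $\Psi$ preserves $e$ and $\beta$, so $\Psi$ is again a strict functor by \cref{defn:strict-functor-gray-mon} and $\Phi$ is invertible in $\PGM$.

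For the left adjoint I would argue via local presentability rather than build the free object by hand. The category $\IICat$ is locally finitely presentable, and $\PGM$ is the category of models of an essentially algebraic theory: by the generators-and-relations description of \cref{defn:graytensor} together with \cref{thm:cubical-Gray}, giving $\oplus$ is the same as giving a cubical functor, i.e.\ a family of finitary operations on the cells of the underlying 2-category (binary on objects, $1$- and $2$-cells, together with the comparison $2$-cells $\Si_{f,g}$), the unit $e$ is a nullary operation, $\beta$ is a specified $2$-natural family of $1$-cells, and every clause of \cref{defn:pgm,defn:graytensor} is an equation among these operations. Hence $\PGM$ is locally presentable. Moreover $U$ creates limits and filtered colimits: for a limit the structure map $(\lim_i \cC_i)^{\otimes 2} \to \lim_i \cC_i$ is induced by the cone $\oplus_i \circ (p_i \otimes p_i)$, whose compatibility with the diagram uses precisely that each transition map is strict, while for filtered colimits one uses that $- \otimes -$ preserves colimits in each variable (it is part of a closed symmetric monoidal structure) and that the diagonal is final in the filtered indexing category, so that $(\mathrm{colim}_i \cC_i)^{\otimes 2} \cong \mathrm{colim}_i(\cC_i \otimes \cC_i)$. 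A continuous, accessible functor between locally presentable categories is a right adjoint, so $U$ has a left adjoint.

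It remains to show $U$ creates coequalizers of $U$-split pairs. Let $s,t\cn \cA \to \cB$ be a pair in $\PGM$ whose image under $U$ has a split coequalizer $q\cn U\cB \to \cC$ in $\IICat$. The decisive observation is that split coequalizers are absolute, hence preserved by every functor; applying the ``squaring'' functor $X \mapsto X \otimes X$ (with $f \mapsto f \otimes f$) and, more generally, each tensor-power functor $X \mapsto X^{\otimes k}$, shows that $q \otimes q\cn (U\cB)^{\otimes 2} \to \cC^{\otimes 2}$ and each $q^{\otimes k}$ are again coequalizers, in particular epimorphisms. Since $q \circ \oplus_\cB$ coequalizes the two legs, it factors uniquely through $q \otimes q$ to define $\oplus_\cC$; the unit and $\beta^\cC$ descend by the same universal property, and $q$ becomes a strict functor. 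Each axiom of \cref{defn:pgm} for $\cC$ then follows from the corresponding axiom for $\cB$ after precomposing with the relevant epimorphism $q^{\otimes k}$ and cancelling it. One checks that $q\cn \cB \to \cC$ is the coequalizer of $s,t$ in $\PGM$ and that $U$ reflects this, which is exactly creation. Beck's theorem then gives that $U$ is monadic in the usual $1$-categorical sense.

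The step I expect to be the main obstacle is this last descent of structure to coequalizers. Because $\cC \mapsto \cC \otimes \cC$ is quadratic rather than linear it does not preserve colimits on the nose, so one cannot simply transport $\oplus$ along an arbitrary coequalizer; restricting to $U$-split (hence absolute) coequalizers is exactly what rescues the argument, since there functoriality alone forces the squaring and higher tensor-power functors to preserve the coequalizer and the structure maps to descend.
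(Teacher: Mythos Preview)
Your argument via Beck's monadicity theorem is correct: conservativity is immediate, the local-presentability reasoning for the left adjoint is sound, and the creation of $U$-split coequalizers goes through exactly because split coequalizers are absolute and hence preserved by the tensor-power functors $X \mapsto X^{\otimes k}$.

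The paper takes a much shorter route. It does not prove the proposition in place, merely recording it as a ``straightforward calculation,'' but the substantive content appears later as \cref{prop:OAlg-gray-is-PGM}: there the authors exhibit an explicit monad $\cO_{\otimes}$ on $\IICat$, built from the Barratt--Eccles operad $\cO$ via the formula $X \mapsto \coprod_{n}\cO(n)\times_{\Sigma_n}X^{\otimes n}$, and check directly that $\PGM$ is isomorphic to its Eilenberg--Moore category. Monadicity then falls out immediately, with the monad named. Your approach is more abstract and requires no guess as to what the free permutative Gray-monoid looks like; theirs is a one-line identification once the operadic machinery is in place, and it delivers strictly more, namely the specific monad $\cO_{\otimes}$, which they then use (via \cref{prop:PGM-is-OAlg}) to treat every permutative Gray-monoid as an $\cO$-algebra. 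For the purposes of the paper that extra information is essential, so even though your proof of the bare monadicity statement is valid, it would not substitute for what the paper actually needs downstream.
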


\begin{lem}\label{lem:psfun-out-of-product}
  Let $F \cn X \times Y \to Z$ be a pseudofunctor between
  bicategories.
  \begin{itemize}
  \item For any object $x$ of $X$, $F$ induces a pseudofunctor
    $F(x,-) \cn Y \to Z$. The pseudofunctor $F(x,-)$ is strict if $F$
    is, hence a 2-functor if $X,Y$ are 2-categories.
  \item For any 1-cell $f \cn x \to x'$, $F$ induces a pseudonatural
    transformation $F(f,-)$ from $F(x,-)$ to $F(x',-)$; if $f$ is an
    equivalence in $X$, then the pseudonatural transformation $F(f,-)$
    is an equivalence. The transformation $F(f,-)$ is strict if $F$
    is, hence a 2-natural transformation if $F$ is strict and $X,Y$
    are 2-categories; furthermore, if $f$ is also an isomorphism then
    $F(f,-)$ is a 2-natural isomorphism.
  \item For any 2-cell $\al \cn f \Rightarrow f'$, $F$ induces a
    modification $F(\al, -)$ from $F(f,-)$ to $F(f',-)$; if $\al$ is
    invertible in $X$, then the modification $F(\al, -)$ is an
    isomorphism.
  \end{itemize}
\end{lem}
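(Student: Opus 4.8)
The plan is to realize each of the three induced structures as a whiskering of $F$ with a corresponding piece of structure on the inclusion pseudofunctors, and then to read off the invertibility clauses pointwise. Throughout I would use that $X \times Y$ is the product bicategory, with cells, composition, and constraints all computed coordinatewise.

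First I would fix an object $x$ of $X$ and record the \emph{inclusion} pseudofunctor $\iota_x \cn Y \to X \times Y$ defined on cells by $y \mapsto (x,y)$, by $g \mapsto (\id_x, g)$, and by $\de \mapsto (\id_{\id_x}, \de)$, with unit and composition constraints inherited from those of $X$ in the first coordinate. Then $F(x,-)$ is the composite $F \circ \iota_x$, and a composite of pseudofunctors is a pseudofunctor, which gives the first bullet. When $X,Y$ are 2-categories the constraints of $\iota_x$ are identities, so $\iota_x$ is a 2-functor; if in addition $F$ is strict, then $F \circ \iota_x$ is strict, hence a 2-functor.

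For the second bullet I would observe that a 1-cell $f\cn x\to x'$ induces a transformation $\hat f\cn \iota_x \Rightarrow \iota_{x'}$ with component $(f,\id_y)$ at $y$ and naturality 2-cell at $g\cn y\to y'$ given by the interchange comparison between $(\id_{x'},g)\circ(f,\id_y)$ and $(f,\id_{y'})\circ(\id_x,g)$ in $X\times Y$ (both equal $(f,g)$ up to unit constraints, and this comparison is an identity when $X,Y$ are 2-categories). Whiskering with $F$ yields the transformation $F(f,-)$, namely $F * \hat f$, whose component at $y$ is $F(f,\id_y)$ and whose naturality 2-cells are assembled from those of $\hat f$ and the composition constraints of $F$. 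Verifying the pseudonaturality axioms is exactly the assertion that whiskering a transformation by a pseudofunctor again gives a transformation, which I would quote from the coherence theory of \cite{Gurski13Coherence} rather than expand the pasting diagrams; strictness then follows as before, since for strict $F$ and 2-categorical $X,Y$ the transformation $\hat f$ is 2-natural and the $F$-constraints are identities. The invertibility clauses I would argue pointwise: if $f$ is an equivalence then $(f,\id_y)$ is an equivalence in $X\times Y$, pseudofunctors preserve (adjoint) equivalences, so each $F(f,\id_y)$ is an equivalence in $Z$, and by the pointwise characterization that a transformation is an equivalence precisely when all its component 1-cells are equivalences \cite{Gur2012Biequivalences}, $F(f,-)$ is an equivalence; if moreover $f$ is an isomorphism 1-cell and $F$ is strict, then $F(f^{-1},\id_y)$ is a strict inverse to $F(f,\id_y)$, so the components are invertible and $F(f,-)$ is a 2-natural isomorphism. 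The third bullet is identical one dimension up: a 2-cell $\al\cn f\Rightarrow f'$ gives a modification $\hat\al$ from $\hat f$ to $\hat{f'}$ with component $(\al,\id_{\id_y})$, whiskering produces $F(\al,-)$ with component $F(\al,\id_{\id_y})$, and invertibility of $\al$ forces invertibility of each component, whence the modification is an isomorphism.

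The main obstacle is bookkeeping rather than mathematics: writing the naturality-2-cell pasting for $F(f,-)$ and checking the two pseudonaturality axioms, and then the single modification axiom for $F(\al,-)$, amounts to unwinding the pseudofunctor axioms of $F$ together with the strict interchange in $X\times Y$. Packaging the data as whiskerings of the inclusion structure and invoking the established functoriality of whiskering, preservation of equivalences by pseudofunctors, and strict preservation of isomorphisms by 2-functors, lets me bypass reproducing these diagrams and reduces the whole lemma to the three pointwise invertibility observations above.
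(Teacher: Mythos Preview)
Your proposal is correct. The paper itself gives no proof for this lemma beyond the sentence ``The following two results follow from straightforward calculations,'' so there is nothing to compare against at the level of argument. Your approach via whiskering with the inclusion pseudofunctors $\iota_x$, the induced transformations $\hat f$, and the induced modifications $\hat\al$ is a clean and standard way to organize exactly those straightforward calculations, and the pointwise invertibility arguments you give for the equivalence and isomorphism clauses are correct.
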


One uses the modifier ``Picard'' for symmetric monoidal algebra where
all objects and morphisms are invertible.  We have several notions in
dimension 2, each consisting of those objects which have invertible
0-, 1-, and 2-cells.

\begin{defn}\label{defn:invertible2}
  Let $(\cD, \oplus, e)$ be a Gray-monoid.
  \begin{enumerate}
  \item A 2-cell of $\cD$ is invertible if it has an inverse in the
    usual sense.
  \item A 1-cell $f \cn x \to y$ is invertible if there exists a
    1-cell $g \cn y \to x$ together with invertible 2-cells
$g\circ f \cong \id_{x}$, $f\circ g \cong \id_{y}$. In other words,
$f$ is
    invertible if it is an internal equivalence (denoted with the
    $\simeq$ symbol) in $\cD$.
  \item An object $x$ of $\cD$ is invertible if there exists another
object $y$ together with invertible 1-cells $x \oplus y \simeq e$,    $y \oplus x \simeq e$.
  \end{enumerate}
\end{defn}

\begin{notn}[Picard objects in dimension 2 {\cite[Definition 2.19]{GJOS2017Postnikov}}]
  \ 
  \begin{itemize}
  \item $\Pic\Bicat_{s}$ denotes the full subcategory of
    $\SMBicat_{s}$ consisting of those symmetric monoidal bicategories
    with all cells invertible; we call these \emph{Picard bicategories}.
  \item $\Pic\IICat_{s}$ denotes the full subcategory of
$\SMIICat_{s}$ consisting of symmetric monoidal 2-categories with    
all cells invertible; we call these \emph{Picard 2-categories}.
\item $\PicPGM$ denotes the full subcategory of $\PGM$ consisting of   
 those permutative Gray-monoids with all cells 
    invertible; we call these \emph{strict Picard
      2-categories}.
  \end{itemize}
\end{notn}

\subsection{The homotopy theory of symmetric monoidal bicategories}
\label{sec:hty-thy-smb}

In this section we discuss the homotopy theories for symmetric
monoidal algebra in dimension 2 and obtain a number of equivalence
results. To begin, we recall quasistrictification results from
\cite{SP2011Classification} and \cite{GJO2017KTheory}, which show how
to replace a symmetric monoidal bicategory with an appropriately
equivalent permutative Gray-monoid.
\begin{thm}[{\cite[Theorem 2.97]{SP2011Classification}, \cite[Theorem 3.14]{GJO2017KTheory}}]
  \label{cohqs2cats}
  Let $\cB$ be a symmetric monoidal bicategory.
  \begin{enumerate}
  \item There are two endofunctors, $\cB \mapsto \cB^{c}$ and
    $\cB \mapsto \cB^{qst}$, of $\SMBicat_{s}$.  Any symmetric
    monoidal bicategory of the form $\cB^{qst}$ is a 
    permutative Gray-monoid.
  \item There are natural transformations
    $(-)^{c} \impl \id, (-)^{c} \impl (-)^{qst}$.  When evaluated at a
    symmetric monoidal bicategory $\cB$, these give natural strict
    biequivalences
    \[
    \cB \leftarrow \cB^{c} \to \cB^{qst}.
    \]
  \item For a symmetric monoidal pseudofunctor $F:\cB \to \cC$, there
    are strict functors $F^{c}:\cB^{c} \to \cC^{c}$,
    $F^{qst}:\cB^{qst} \to \cC^{qst}$ such that the right hand square
    below commutes and the left hand square commutes up to a symmetric
    monoidal equivalence.
    \[
    \begin{tikzpicture}[x=1mm,y=1mm]
    \draw[tikzob,mm] 
    (0,0) node (00) {\cB}
    (30,0) node (10) {\cB^{c}}
    (60,0) node (20) {\cB^{qst}}
    (0,-12) node (01) {\cC}
    (30,-12) node (11) {\cC^{c}}
    (60,-12) node (21) {\cC^{qst}}
    ;
    \path[tikzar,mm] 
    (10) edge node{F^{c}} (11)
    (10) edge node{ } (20)
    (20) edge node{F^{qst}} (21)
    (10) edge node{ } (00)
    (00) edge[swap] node{F} (01)
    (11) edge node{ } (01)
    (11) edge node{ } (21)
    ;
    \draw[tikzob,mm] 
    (15,-6) node {\simeq}
    ;
    \end{tikzpicture}
    \]
  \end{enumerate}
\end{thm}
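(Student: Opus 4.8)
The plan is to recognize this statement as a packaging of two known coherence results---the quasi-strictification theorem \cite[Theorem 2.97]{SP2011Classification} and its functorial refinement \cite[Theorem 3.14]{GJO2017KTheory}---and to assemble them into the stated functors, transformations, and comparison squares. The organizing principle is the monad-theoretic description noted after \cref{defn:strict-monoidal}: symmetric monoidal bicategories are the algebras for a $2$-monad $T$ on $2$-globular sets, with $\SMBicat_{s}$ the category of algebras and strict ($T$-algebra) maps, while $\SMBicat_{ps}$ records the same objects but with the strictly symmetric monoidal pseudofunctors as morphisms. In this language, $(-)^{c}$ is a flexible (cofibrant) replacement, and $(-)^{qst}$ is a further strictification of the monoidal, but not the symmetric, structure.

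First I would construct $(-)^{c}$ as the pseudomorphism classifier for $T$. On objects this is functorial for strict maps, giving the endofunctor of $\SMBicat_{s}$ required by part (1); because the classifier is moreover functorial for pseudofunctors, it simultaneously assigns to each \emph{pseudo}functor $F\cn \cB \to \cC$ a \emph{strict} functor $F^{c}\cn \cB^{c} \to \cC^{c}$, as needed for part (3). The canonical augmentation $\cB^{c} \to \cB$ is a strict map of algebras whose underlying pseudofunctor is a biequivalence; standard $2$-monad theory, as employed in \cite{SP2011Classification}, shows it is a strict biequivalence, which is the natural transformation $(-)^{c} \impl \id$ of part (2).

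Next I would obtain $(-)^{qst}$ by applying the semi-strictification of the monoidal structure to the flexible object $\cB^{c}$. Here the coherence theorem for tricategories \cite{GPS95Coherence,Gurski13Coherence}, specialized to one-object tricategories and refined to track the symmetry as in \cite[Theorem 2.97]{SP2011Classification}, replaces the associativity and unit equivalences by identities while leaving the braid equivalence $\beta$, the syllepsis, and the symmetry weak; the result is a permutative Gray-monoid in the sense of \cref{defn:pgm}, so $\cB^{qst}$ lies in $\PGM$ (see \cref{notn:PGM}). This comes equipped with a natural strict biequivalence $\cB^{c} \to \cB^{qst}$, yielding $(-)^{c} \impl (-)^{qst}$, and being functorial on strict maps it sends $F^{c}$ to a strict functor $F^{qst}\cn \cB^{qst} \to \cC^{qst}$.

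Finally I would verify the two squares of part (3). The right-hand square commutes on the nose: its vertical maps are the components of the natural transformation $(-)^{c} \impl (-)^{qst}$, and $F^{c}$ is an honest morphism of $\SMBicat_{s}$, so naturality is strict. The left-hand square, by contrast, involves the augmentation $\cB^{c} \to \cB$, which is only pseudonatural with respect to pseudofunctors; its naturality square for $F$ closes up to an invertible comparison, and that comparison is exactly the claimed symmetric monoidal equivalence. I expect the genuine difficulty to be concentrated in the semi-strictification step: the content of \cite[Theorem 2.97]{SP2011Classification} is that one may rigidify everything except the symmetry while staying inside a strict ambient category, and checking both that the symmetry obstruction is the only one that survives and that the resulting functor is compatible with the cofibrant replacement is where all the coherence data must be handled. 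Everything else is bookkeeping with the universal property of the pseudomorphism classifier.
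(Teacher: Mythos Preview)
The paper does not give a proof of this theorem; it is stated as a citation of \cite[Theorem 2.97]{SP2011Classification} and \cite[Theorem 3.14]{GJO2017KTheory}, and the subsequent \cref{thm:cohqs2cats2,lem:qst-detail} are likewise imported rather than proved here. Your outline is a faithful sketch of how those cited results are established---the pseudomorphism-classifier description of $(-)^{c}$ and the semi-strictification giving $(-)^{qst}$---so there is nothing to compare beyond noting that you have supplied a gloss where the paper simply defers to the literature.
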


\begin{thm}[{\cite[Theorem 3.15]{GJO2017KTheory}}]
  \label{thm:cohqs2cats2}
  Let $\cB$ be a  
    permutative Gray-monoid.
  \begin{enumerate}
  \item There is a strict functor $\nu \cn \cB^{qst} \to \cB$ such
    that
    \[
    \begin{tikzpicture}[x=1mm,y=1mm]
    \draw[tikzob,mm] 
    (0,0) node (0) {\cB^{c}}
    (40,0) node (1) {\cB^{qst}}
    (20,-12) node (2) {\cB}
    ;
    \path[tikzar,mm] 
    (0) edge (1)
    (1) edge node{\nu} (2)
    (0) edge (2)
    ;
    \end{tikzpicture}
    \]
    commutes, where the unlabeled morphisms are those from
    \cref{cohqs2cats}.  In particular, $\nu$ is a strict symmetric
    monoidal biequivalence.
  \item For a symmetric monoidal pseudofunctor $F:\cB \to \cC$ with
    $\cB, \cC$ with both
    permutative Gray-monoids, the square below commutes up to
    a symmetric monoidal equivalence.
    \[
    \begin{tikzpicture}[x=1mm,y=1mm]
    \draw[tikzob,mm] 
    (0,0) node (00) {\cB^{qst}}
    (40,0) node (10) {\cB}
    (0,-12) node (01) {\cC^{qst}}
    (40,-12) node (11) {\cC}
    ;
    \path[tikzar,mm] 
    (00) edge node{\nu} (10)
    (10) edge node{F} (11)
    (00) edge[swap] node{F^{qst}} (01)
    (01) edge[swap] node{\nu} (11)
    ;
    \draw[tikzob,mm] 
    (20,-6) node {\simeq}
    ;
    \end{tikzpicture}
    \]
  \end{enumerate}
\end{thm}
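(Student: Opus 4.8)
The plan is to obtain $\nu$ together with the commuting triangle from a single universal property of the semistrictification $(-)^{qst}$, and then to deduce everything else formally from \cref{cohqs2cats}. Write $p\cn \cB^{c} \to \cB^{qst}$ and $q\cn \cB^{c} \to \cB$ for the two strict biequivalences of \cref{cohqs2cats}, both of which are strict functors in $\SMBicat_{s}$. By \cref{prop:defn-PGM} permutative Gray-monoids are the algebras for a monad on $\IICat$, and the construction $\cB \mapsto \cB^{qst}$ realizes $\cB^{qst}$ as the permutative Gray-monoid freely (quasi-)generated by $\cB^{c}$, with $p$ the corresponding universal strict functor. Since $\cB$ is itself a permutative Gray-monoid and $q$ is a strict functor into it, this universal property produces a unique strict functor $\nu\cn \cB^{qst} \to \cB$ of permutative Gray-monoids with $\nu \circ p = q$, which is exactly the asserted triangle.

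The hard part will be making this universal property precise and checking that the induced $\nu$ is genuinely strict, i.e.\ a strict functor of permutative Gray-monoids in the sense of \cref{defn:strict-functor-gray-mon} and not merely a symmetric monoidal pseudofunctor. This requires unwinding the explicit generators-and-relations description of $\cB^{qst}$ and verifying that evaluating each generator against the monoidal structure of $\cB$ respects composition, the unit, and the symmetry $\beta$ on the nose. I expect this bookkeeping, rather than any conceptual difficulty, to be the principal cost of part (1).

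Given the triangle, the biequivalence claim is formal. Both $p$ and $q$ are strict biequivalences by \cref{cohqs2cats}, so $p$ admits a weak inverse and $\nu \simeq q \circ p^{-1}$ is a composite of biequivalences; by two-out-of-three $\nu$ is a biequivalence. As it is already a strict functor of permutative Gray-monoids, it is a strict symmetric monoidal biequivalence.

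For the naturality square in part (2), I would precompose with the biequivalence $p_{\cB}\cn \cB^{c} \to \cB^{qst}$; since $p_{\cB}$ has a weak inverse, the $\nu$-square commutes up to symmetric monoidal equivalence if and only if its precomposite with $p_{\cB}$ does. Along one edge the triangle gives $F \circ \nu_{\cB} \circ p_{\cB} = F \circ q_{\cB}$, while along the other the strictly commuting right-hand square of \cref{cohqs2cats} gives $\nu_{\cC} \circ F^{qst} \circ p_{\cB} = \nu_{\cC} \circ p_{\cC} \circ F^{c} = q_{\cC} \circ F^{c}$. The left-hand square of \cref{cohqs2cats} supplies a symmetric monoidal equivalence $F \circ q_{\cB} \simeq q_{\cC} \circ F^{c}$, and whiskering this back along a weak inverse of $p_{\cB}$ fills the $\nu$-square, completing part (2).
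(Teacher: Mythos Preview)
The paper does not prove this theorem: it is quoted verbatim as \cite[Theorem~3.15]{GJO2017KTheory} and used as a black box, so there is no in-paper argument to compare your proposal against.

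On the merits of your sketch: the deduction of part~(2) from part~(1) together with \cref{cohqs2cats} is correct and essentially forced---precomposing with the biequivalence $p_{\cB}$, using the two triangles for $\nu$, and invoking the left-hand square of \cref{cohqs2cats} is exactly the right formal manipulation. The weak point is your treatment of part~(1). You assert that $\cB^{qst}$ enjoys a universal property as the permutative Gray-monoid freely generated by $\cB^{c}$ along $p$, and you invoke \cref{prop:defn-PGM} as justification. But \cref{prop:defn-PGM} only says that $\PGM$ is monadic over $\IICat$; it does not identify the specific Schommer-Pries construction $(-)^{qst}$ with the free algebra functor for that monad, nor does it say that $p\cn \cB^{c}\to\cB^{qst}$ is the unit of any adjunction. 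The quasistrictification $(-)^{qst}$ is built by an explicit generators-and-relations procedure on symmetric monoidal bicategories, and the map $\nu$ in \cite{GJO2017KTheory} is produced by direct inspection of that presentation, not by appealing to a pre-established universal property. Your own parenthetical---that the ``hard part'' is making the universal property precise and checking strictness by hand---is an honest acknowledgment that the proposal, as written, defers the actual content of part~(1) rather than supplying it.
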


We also require an additional detail about quasistrictification which
follows from the construction in \cite[Theorem
2.97]{SP2011Classification} and the proof of [\textit{loc.~cit.},
Proposition 2.77].
\begin{lem}\label{lem:qst-detail} 
  Let $\cA$, $\cB$ and $\cC$ be permutative Gray-monoids, and let
  $F\cn \cA \to \cB$ and $G\cn \cB \to \cC$ be symmetric monoidal
  pseudofunctors. Then $(GF)^{qst}$ is symmetric monoidal equivalent
  to $G^{qst}F^{qst}$. Moreover, if $G$ is a strict functor, then
  $(GF)^{qst} = G^{qst}F^{qst}$.
\end{lem}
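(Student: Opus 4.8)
The plan is to establish the two assertions by different means. The symmetric monoidal equivalence $(GF)^{qst} \simeq G^{qst} F^{qst}$ follows formally from the naturality of the strict biequivalence $\nu$ recorded in \cref{thm:cohqs2cats2}, with no reference to the internal workings of $(-)^{qst}$. The strict equality in the case that $G$ is a strict functor, on the other hand, genuinely requires unwinding the explicit description of $(-)^{qst}$ on pseudofunctors from \cite{SP2011Classification}, since an on-the-nose equality cannot be extracted from a statement that only holds up to equivalence.

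For the equivalence, I would apply \cref{thm:cohqs2cats2}(2) to each of $F$, $G$, and the composite $GF$ (all symmetric monoidal pseudofunctors between permutative Gray-monoids), obtaining
\[
\nu_\cB \circ F^{qst} \simeq F \circ \nu_\cA, \qquad \nu_\cC \circ G^{qst} \simeq G \circ \nu_\cB, \qquad \nu_\cC \circ (GF)^{qst} \simeq (GF) \circ \nu_\cA.
\]
Pasting the first two and using that composition of symmetric monoidal pseudofunctors respects symmetric monoidal equivalence yields
\[
\nu_\cC \circ (G^{qst} F^{qst}) \simeq G \circ F \circ \nu_\cA \simeq \nu_\cC \circ (GF)^{qst}.
\]
Since $\nu_\cC$ is a strict symmetric monoidal biequivalence, it is an internal biequivalence in $\mathsf{SMB}$ and so admits a symmetric monoidal weak inverse; whiskering the displayed equivalence by this inverse on the left cancels $\nu_\cC$ and gives $(GF)^{qst} \simeq G^{qst} F^{qst}$. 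The only points to verify here are that whiskering a symmetric monoidal equivalence by a fixed pseudofunctor again produces one and that such equivalences compose, both of which are formal facts in the tricategory $\mathsf{SMB}$.

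For the strict case I would instead compare the two strict functors $\cA^{qst} \to \cC^{qst}$ cell by cell, using the construction of $(-)^{qst}$ from \cite[Theorem 2.97]{SP2011Classification} and its extension to pseudofunctors from the proof of [\textit{loc.~cit.}, Proposition 2.77]. Recall that $\cA^{qst}$ is presented by generating cells coming from $\cA$ together with formal composites, and that for a pseudofunctor $F$ the strict functor $F^{qst}\cn \cA^{qst} \to \cB^{qst}$ is defined on these generators using the underlying data of $F$ together with its tensor and composition constraints. The composite $GF$ has comparison constraints built by applying $G$ to those of $F$ and then composing with those of $G$; when $G$ is a strict functor its own constraints are identities (or the canonical coherence isomorphisms), so the constraints of $GF$ are literally $G$ applied to those of $F$. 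Evaluating $G^{qst} F^{qst}$ on a generating cell therefore first records the constraints of $F$ via $F^{qst}$ and then transports them along the strict functor $G^{qst}$, reproducing exactly the formula defining $(GF)^{qst}$ on that cell; as both maps are strict and agree on generators, they are equal. The main obstacle is precisely this bookkeeping: one must check that the explicit assignment defining $F^{qst}$ on \emph{every} type of generating cell---objects, basic $1$-cells, the Gray-structure $2$-cells $\Si$, and the symmetry $\beta$---interacts with a strict $G$ with no residual coherence discrepancy, equivalently that the pseudofunctor-level extension of $(-)^{qst}$ restricts on strict functors to the genuine functoriality of $(-)^{qst}$ as an endofunctor of $\SMBicat_{s}$ from \cref{cohqs2cats}(1). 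Granting the explicit formulas of \cite{SP2011Classification}, this is a direct if tedious comparison of constraint data rather than a conceptual difficulty.
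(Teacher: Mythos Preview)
Your proposal is correct, and for the first assertion you take a cleaner route than the paper.  The paper offers no argument at all: it simply records that the lemma ``follows from the construction in \cite[Theorem 2.97]{SP2011Classification} and the proof of [\textit{loc.~cit.}, Proposition 2.77],'' deferring both claims to direct inspection of the quasistrictification machinery.  You instead extract the equivalence $(GF)^{qst}\simeq G^{qst}F^{qst}$ formally from the up-to-equivalence naturality square of $\nu$ in \cref{thm:cohqs2cats2}(2), using only that $\nu_\cC$ is a symmetric monoidal biequivalence and that whiskering and composition of symmetric monoidal equivalences behave as expected in the tricategory $\mathsf{SMB}$.  This is a genuine gain: it isolates exactly where the explicit construction is unavoidable (only the on-the-nose equality when $G$ is strict) and shows that the equivalence half is a purely formal consequence of already-cited results, with no circularity since \cref{thm:cohqs2cats2} is an external theorem from \cite{GJO2017KTheory}.

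For the strict case your plan coincides with what the paper points to: one really must open up the Schommer-Pries description of $F^{qst}$ on generating cells and check that, when $G$ has identity constraints, the assignment for $(GF)^{qst}$ agrees literally with $G^{qst}\circ F^{qst}$.  Your remark that this amounts to verifying that the pseudofunctor-level extension of $(-)^{qst}$ restricts on strict maps to the ordinary functoriality of \cref{cohqs2cats}(1) is exactly the right framing, and the paper's citation of \cite[Proposition 2.77]{SP2011Classification} is the place where that bookkeeping is carried out.
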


We now turn to equivalences of homotopy theories.  In
\cref{prop:smbs=smbps,cor:smbs=smbps} we show that the homotopy category of $\SMBicat_s$ 
is equivalent to $\Ho\SMB$. In \cref{lem:PGM-to-SMB-equiv} we show
that the homotopy theory of $\Pic\PGM$, respectively $\PGM$, is
equivalent to that of $\Pic\SMBicat_{s}$, respectively $\SMBicat_{s}$,
with a wide choice of classes of weak equivalences.

\begin{defn}
  Let $\mathsf{PGM}$ denote the tricategory whose objects are
  permutative Gray-monoids and whose 1-, 2-, and 3-cells are those of
  $\SMB$.  Likewise let $\Ho\mathsf{PGM}$ denote the full subcategory
  of $\Ho\SMB$ whose objects are permutative Gray-monoids; its
  morphisms are given by equivalence classes of pseudofunctors.
\end{defn}

\begin{defn}
For a category $\cC$ and a class of morphisms $\cW$ in $\cC$, we write 
$\Ho(\cC, \cW)$ for the localization of $\cC$ with respect to $\cW$; we note
that this is not necessarily locally small. This category is the homotopy category
of the homotopy theory $(\cC, \cW)$.
See \cite{Rez01Model} for the general theory supporting this notion,
or \cite[Section 2.1]{GJO2017KTheory} for an overview.
\end{defn}

\begin{prop}\label{prop:smbs=smbps}
  There is an isomorphism of categories
  \[
    \Ho(\PGM, \cateq) \iso \Ho\mathsf{PGM}.
  \]
\end{prop}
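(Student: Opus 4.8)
The plan is to produce a functor $\Phi\cn \Ho(\PGM,\cateq) \to \Ho\mathsf{PGM}$ that is the identity on objects and then prove it is bijective on each hom-set; since both categories have the permutative Gray-monoids as their objects, such a functor is exactly an isomorphism of categories. To obtain $\Phi$ I would start from the functor $\PGM \to \Ho\mathsf{PGM}$ sending a strict functor to its class under symmetric monoidal pseudonatural equivalence. By \cref{defn:HoSMB} every symmetric monoidal biequivalence is an isomorphism in $\Ho\SMB$, hence in its full subcategory $\Ho\mathsf{PGM}$; in particular every map in $\cateq$ (a strict functor that is a biequivalence) goes to an isomorphism. The universal property of localization then factors this functor uniquely as $\Phi$ through $\Ho(\PGM,\cateq)$.

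To invert $\Phi$ I would use quasistrictification to build $\Psi\cn \Ho\mathsf{PGM} \to \Ho(\PGM,\cateq)$, again the identity on objects. On a morphism represented by a symmetric monoidal pseudofunctor $F\cn \cC \to \cD$, set $\Psi[F] = [\,\nu_{\cD} \circ F^{qst} \circ \nu_{\cC}^{-1}\,]$, where $F^{qst}\cn \cC^{qst} \to \cD^{qst}$ is the strict functor of \cref{cohqs2cats} and $\nu_{\cC},\nu_{\cD}$ are the strict biequivalences of \cref{thm:cohqs2cats2}. This is legitimate because $\nu_{\cC},\nu_{\cD} \in \cateq$ are invertible in the localization. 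Functoriality of $\Psi$ should then follow from \cref{lem:qst-detail}, which gives $(GF)^{qst} \simeq G^{qst}F^{qst}$, together with the squares of \cref{cohqs2cats} and \cref{thm:cohqs2cats2} that commute up to symmetric monoidal equivalence.

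All remaining verifications — well-definedness of $\Psi$ on equivalence classes, its functoriality, and both composites $\Phi\Psi = \id$ and $\Psi\Phi = \id$ — reduce to a single statement, which I will call the collapse lemma: \emph{if two strict functors $F,G\cn \cC \to \cD$ of permutative Gray-monoids are connected by a symmetric monoidal pseudonatural equivalence, then $F = G$ in $\Ho(\PGM,\cateq)$}. Granting this, the well-definedness and functoriality of $\Psi$ follow by upgrading each ``$\simeq$'' in \cref{lem:qst-detail} and in the quasistrictification squares to a genuine equality in the localization; the identity $\Phi\Psi=\id$ follows from $F \circ \nu_{\cC} \simeq \nu_{\cD}\circ F^{qst}$; and $\Psi\Phi=\id$ is checked on the generators of $\Ho(\PGM,\cateq)$ — strict functors and formal inverses of $\cateq$-maps — since for a strict $h$ the maps $h\circ \nu_{\cC}$ and $\nu_{\cD}\circ h^{qst}$ are both strict and symmetric monoidal equivalent, whence the collapse lemma gives $\nu_{\cD}\, h^{qst}\,\nu_{\cC}^{-1} = h$.

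The main obstacle is the collapse lemma, and I would prove it with a path-object argument internal to $\PGM$. Let $\mathbf E$ denote the free-living adjoint equivalence (two objects and an adjoint equivalence between them). Since the forgetful functor $\PGM \to \IICat$ is monadic (\cref{prop:defn-PGM}) it creates limits, so the cotensor $\cD^{\mathbf E}$, formed in $\IICat$, lifts canonically to a permutative Gray-monoid; its two endpoint evaluations $\cD^{\mathbf E} \to \cD$ are strict functors, each a categorical equivalence, and they share the common section given by the constant strict functor $\cD \to \cD^{\mathbf E}$. A symmetric monoidal pseudonatural equivalence $F \simeq G$ then determines a strict functor $H\cn \cC \to \cD^{\mathbf E}$ whose two evaluations recover $F$ and $G$. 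In the localization the two evaluations become mutually inverse to the common section, hence equal, forcing $F = G$. The delicate technical point — the part I expect to require the most care — is the bookkeeping that a symmetric monoidal pseudonatural equivalence of strict functors corresponds to a strict functor into this path object while respecting the permutative structure; this is precisely where the semistrict coherence of permutative Gray-monoids, rather than fully weak symmetric monoidal bicategories, is what makes the construction tractable.
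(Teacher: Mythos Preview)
Your overall strategy matches the paper's: build the map $\Ho\mathsf{PGM}\to\Ho(\PGM,\cateq)$ via $\nu\,(-)^{qst}\,\nu^{-1}$, and reduce everything to a path-object argument showing that symmetric monoidal equivalence collapses in the localization. The paper does exactly this. But your implementation of the collapse lemma has a real gap.

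The problem is the path object. The cartesian cotensor $\cD^{\mathbf E}$ lifted via monadicity classifies the wrong thing: a strict functor $\cC\to[\mathbf E,\cD]$ unwinds to a 2-functor $\cC\times\mathbf E\to\cD$, and the resulting equivalence $F\simeq G$ is \emph{2-natural}, not pseudonatural. Moreover, even the monoidal constraint modifications of the transformation would have to be identities for the corresponding $H$ to be a strict functor of permutative Gray-monoids. A general symmetric monoidal pseudonatural equivalence (\cref{defn:symm-mon-equiv}) has neither property, so your $H$ does not exist as a morphism in $\PGM$. The semistrictness of permutative Gray-monoids does not help here: it strictifies the ambient monoidal structure, not the 2-cells of a transformation between functors.

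The paper confronts this head-on. Its path object $\cB^I$ is built by hand (objects are equivalences, 1-cells are squares commuting up to iso), and it is checked directly---not via monadicity---that $\cB^I$ is a permutative Gray-monoid with strict evaluations $e_0,e_1$ and strict constant section $i$. Crucially, the classifying map $A_\alpha\cn\cA\to\cB^I$ associated to a symmetric monoidal equivalence $\alpha$ is only a symmetric monoidal \emph{pseudofunctor}. The paper then applies $(-)^{qst}$ to $A_\alpha$ and uses the identity $(S\circ F)^{qst}=S^{qst}\circ F^{qst}$ for strict $S$ (\cref{lem:qst-detail}) together with naturality of $\nu$ to pull the strict evaluations $e_0,e_1$ through, obtaining the collapse in $\Ho(\PGM,\cateq)$. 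In other words, the quasistrictification machinery is needed not only to define $\Psi$ but also inside the path-object argument itself; you cannot avoid it by choosing a clever cotensor.
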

\begin{proof}\proofof{prop:smbs=smbps}\marginref{cohqs2cats,thm:cohqs2cats2}
  We define a functor 
  \[
    \Phi\cn\Ho \mathsf{PGM} \to \Ho( \PGM, \cateq) 
  \]
  to be the identity on objects.    For permutative
  Gray-monoids $\cA$ and $\cB$,
  let
  $[\cA, \cB]$ denote $\Ho\mathsf{PGM}(\cA, \cB)$, and let
  $\{\cA,\cB\}$ denote $\Ho (\PGM, \cateq)(\cA, \cB)$.
  As a mnemonic, note that $[\cA,\cB]$ is a set of equivalence classes
  of morphisms, while $\{\cA,\cB\}$ is, \textit{a~priori}, defined by
  zigzags of strict functors.  
  We define a function
  \[
    \Phi_{\cA,\cB} \cn [\cA, \cB] \to \{\cA, \cB\}
  \]
  below, using the quasistrictification $(-)^{qst}$ of
  \cref{cohqs2cats}.
  
  Recall that when $\cA$ is a permutative
  Gray-monoid, there is a strict symmetric monoidal biequivalence
  $\nu \cn \cA^{qst} \to \cA$ which is natural in strict functors
  (\cref{thm:cohqs2cats2}).  For a symmetric monoidal pseudofunctor
  $F \cn \cA \to \cB$, we let $\Phi_{\cA,\cB}(F)$ be the class of the
  zigzag
  \[
    \begin{tikzpicture}[x=14mm,y=20mm]
      \draw[tikzob,mm] 
      (0,0) node (A) {\cA}
      ++(1,0) node (Aq) {\cA^{qst}}
      ++(1.6,0) node (Bq) {\cB^{qst}}
      ++(1,0) node (B) {\cB}
      ;
      \path[tikzar,mm] 
      (Aq) edge[swap] node {\nu} (A)
      (Aq) edge node {F^{qst}} (Bq)
      (Bq) edge node {\nu} (B)
      ;
    \end{tikzpicture}
  \]
  in $\Ho( \PGM, \cateq)$. We must check that this function is
  well-defined on equivalence classes of morphisms which will be
  used to prove that the functions $\Phi_{\cA,\cB}$ assemble to define a functor.
    We will show that this functor is a bijection, hence $\Phi$ defines an
  isomorphism of categories.

  We begin with an observation about the collection of functions
  $\Phi_{-,-}$.
  Let $F\colon \cA \to \cB$ be a symmetric monoidal pseudofunctor and
  $S\colon \cB \to \cC$ is a strict functor.
  By \cref{cohqs2cats}, \cref{lem:qst-detail}, and the
  naturality of $\nu$ with respect to strict functors, we have the
  following equalities in $\Ho(\PGM, \cateq)$:
  \begin{equation}\label{eq:Phi-SF}
    \Phi_{\cA,\cC}(SF) = \nu (SF)^{qst} \nu^{-1} =  \nu S^{qst}F^{qst}
    \nu^{-1} = S \nu F^{qst}    \nu^{-1} = S \Phi_{\cA,\cB}(F).
  \end{equation}

  We now apply the observation in \cref{eq:Phi-SF} to a path object
  construction.  Given a symmetric monoidal bicategory $\cB$, the path
  object $\cB^I$ is another symmetric monoidal bicategory with
  \begin{itemize}
  \item objects $(b,b', f)$ where $b,b'$ are objects and
    $f \cn b \to b'$ is an equivalence,
  \item 1-cells $(p,p',\al)\cn (b,b', f) \to (c,c',g)$ where
    $p \cn b \to c$, $p' \cn b' \to c'$, and $\al \cn p'f \cong gp$, and
  \item 2-cells $(\Ga,\Ga') \cn (p,p',\al) \Rightarrow (q,q',\be)$
    where $\Ga \cn p \Rightarrow q$, $\Ga' \cn p' \Rightarrow q'$ commuting with
    $\al, \be$ in the obvious way.
  \end{itemize}
  Tedious calculation shows that if $\cB$ is a permutative
  Gray-monoid, then so is $\cB^I$.  There are strict projection
  functors to each coordinate which are both symmetric monoidal
  biequivalences $e_0, e_1 \cn \cB^I \to \cB$, and the inclusion
  $i \cn \cB \to \cB^I$ which sends $b$ to $(b,b,\id_b)$ which is also
  a strict symmetric monoidal biequivalence if $\cB$ is a permutative
  Gray-monoid. Since $e_0 i = e_1 i = \id_{\cB}$, we get that
  $e_0 = e_1$ in $\Ho(\PGM, \cateq)$. Finally, given symmetric
  monoidal pseudofunctors $F,G \cn \cA \to \cB$ and a symmetric
  monoidal equivalence $\al$ between them, 
  we have a symmetric monoidal
  pseudofunctor $A_{\al}: \cA \to \cB^I$ such that $e_0 A_{\al} = F$,
  $e_1 A_{\al} = G$, and the values of $A_\al$ on the nontrivial
  morphism in $I$ are given by the components of $\al$.
  \[
    \begin{tikzpicture}[x=20mm,y=10mm]
      \draw[tikzob,mm] 
      (1,1) node (c0) {\cB}
      (1,-1) node (c1) {\cB}
      (-1.5,0) node (b) {\cA}
      (0,0) node (ci) {\cB^I}
      ;
      \path[tikzar,mm] 
      (b) edge[bend left] node {F} (c0)
      (b) edge[bend right] node {G} (c1)
      (ci) edge node {e_0} (c0)
      (ci) edge[swap] node {e_1} (c1)
      (b) edge[dashed] node {A_\al} (ci)
      ;
    \end{tikzpicture}
  \]
  
  Given a symmetric monoidal equivalence $\al$ between $F$ and $G$, we
  now apply the observation in \cref{eq:Phi-SF} to the pseudofunctor $A_\al$ and deduce that
  the following holds in $\Ho(\PGM, \cateq)$:
  \[
    \begin{array}{rcl}
      \nu F^{qst} \nu^{-1} & = & \nu \, (e_0 A_{\al})^{qst}\,  \nu^{-1} \\
                           & = & e_0 \nu A_{\al}^{qst} \nu^{-1} \\
                           & = & e_1 \nu A_{\al}^{qst} \nu^{-1} \\
                           & = &  \nu (e_1 A_{\al})^{qst} \nu^{-1} \\
                           & = & \nu G^{qst} \nu^{-1}.
    \end{array}
  \] 
  Thus if two symmetric monoidal pseudofunctors are equivalent, then
  their images in $\Ho(\PGM, \cateq)$ are equal and therefore
  $\Phi_{\cA,\cB}$ is well-defined.  This implies that $\Phi$ is
  functorial because $(GF)^{qst}$ is equivalent to $G^{qst}F^{qst}$
  once again by \cref{lem:qst-detail}.  This finishes the construction
  of a functor $\Ho \mathsf{PGM} \to \Ho(\PGM, \cateq)$.

  A functor in the other direction is even easier to construct.  Once
  again we take the function on objects to be the identity.  For a
  strict functor $F$, we take the image to be $F$ considered as a
  symmetric monoidal pseudofunctor; for the formal inverse of a strict
  biequivalence $F$, we take the image to be the equivalence class of
  weak inverses for $F$ in $\mathsf{PGM}$.  Note that all weak
  inverses are equivalent, so this is well-defined up to
  equivalence. Now $\nu F^{qst} = F\nu$ in $\PGM$, again by naturality
  of $\nu$ with respect to strict functors, so if $G$ is a weak
  inverse for $F$ in $\mathsf{PGM}$ then
  \[
    F\nu G^{qst} \nu^{-1}  = \nu F^{qst}G^{qst} \nu^{-1} 
    = \nu (FG)^{qst} \nu^{-1} = \nu \nu^{-1} = \id_{\cA}
  \]
  in $\Ho(\PGM, \cateq)$ and $\nu G^{qst} \nu^{-1}$ is also
  the formal inverse for $F$.  This calculation shows that the
  composite function
  \[
    \{\cA, \cB\} \to [\cA, \cB] \to \{\cA, \cB\}
  \]
  is the identity. The composite
  \[
    [\cA, \cB] \to \{\cA, \cB\} \to [\cA, \cB]
  \]
  is also the identity since $F$ is equivalent to $\nu F^{qst} \nu^{-1}$
  for any symmetric monoidal pseudofunctor $F$ by \cref{thm:cohqs2cats2} (ii).
\end{proof}

\begin{rmk} 
  The construction of the path object $\cB^I$ is mentioned in
  \cite[Remark 2.69]{SP2011Classification} where it is claimed that
  this path object can be used to equip the category of symmetric
  monoidal bicategories and strict functors with a transferred Quillen
  model structure. As far as we are aware, this claim is incorrect, as
  the map $\cB \to \cB^I$ is not strict without further restrictions
  on $\cB$. For example, this inclusion will not strictly preserve the
  monoidal structure unless $\id_a \oplus \id_b = \id_{a \oplus b}$
  which does not hold in a generic symmetric monoidal bicategory.
\end{rmk}

\begin{cor}\label{cor:smbs=smbps}
  There is an equivalence of categories between
  $\Ho (\SMBicat_{s}, \cateq)$ and $\Ho\SMB$.
\end{cor}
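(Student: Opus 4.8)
The plan is to present the desired equivalence as a composite of three, routing through the permutative Gray-monoids, where \cref{prop:smbs=smbps} has already done the substantive work. Explicitly, I would assemble the chain
\[
  \Ho(\SMBicat_{s}, \cateq) \simeq \Ho(\PGM, \cateq) \iso \Ho\mathsf{PGM} \simeq \Ho\SMB,
\]
in which the central isomorphism is exactly \cref{prop:smbs=smbps}, and the two flanking equivalences are both supplied by the quasistrictification of \cref{cohqs2cats,thm:cohqs2cats2}.

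For the left-hand equivalence, I would use the quasistrictification endofunctor $(-)^{qst}$ of $\SMBicat_{s}$; since every $\cB^{qst}$ is a permutative Gray-monoid, this restricts to a functor $Q\cn \SMBicat_{s} \to \PGM$, and I write $\iota\cn \PGM \hookrightarrow \SMBicat_{s}$ for the full inclusion. The first point to check is that $Q$ and $(-)^{c}$ carry categorical equivalences to categorical equivalences; this follows by two applications of the 2-out-of-3 property for biequivalences to the naturality squares of the strict biequivalences $\cB \leftarrow \cB^{c} \to \cB^{qst}$, which are natural in strict functors by \cref{cohqs2cats}. Hence both $Q$ and $\iota$ descend to the localizations. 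The natural zigzag $\id \Leftarrow (-)^{c} \Rightarrow (-)^{qst}$ has all legs given by strict biequivalences, so its components lie in $\cateq$; localizing therefore turns it into a natural isomorphism $\id \cong \iota Q$ on $\Ho(\SMBicat_{s},\cateq)$. Dually, the strict biequivalence $\nu\cn \cB^{qst}\to\cB$ of \cref{thm:cohqs2cats2}, which is natural in strict functors, localizes to a natural isomorphism $Q\iota \cong \id$ on $\Ho(\PGM,\cateq)$. Thus $\Ho(Q)$ and $\Ho(\iota)$ are inverse equivalences.

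For the right-hand equivalence, recall that $\Ho\mathsf{PGM}$ is by definition the full subcategory of $\Ho\SMB$ spanned by the permutative Gray-monoids, so its inclusion is fully faithful and only essential surjectivity remains. Given a symmetric monoidal bicategory $\cB$, the strict biequivalences $\cB \leftarrow \cB^{c} \to \cB^{qst}$ become isomorphisms in $\Ho\SMB$, because every symmetric monoidal biequivalence is invertible there by \cref{defn:HoSMB}; this exhibits $\cB$ as isomorphic to the permutative Gray-monoid $\cB^{qst}$, giving essential surjectivity and completing the chain.

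I expect the only real friction to be in the left-hand step: confirming that $(-)^{qst}$ and $(-)^{c}$ preserve categorical equivalences, and that a natural transformation whose components are weak equivalences descends to a natural isomorphism of the induced functors on localizations. Both hinge on the strict naturality of the quasistrictification comparisons recorded in \cref{cohqs2cats,thm:cohqs2cats2}, so the argument is conceptually routine but calls for care about which comparison cells are strictly, rather than merely weakly, natural.
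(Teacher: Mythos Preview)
Your proposal is correct and follows essentially the same approach as the paper: both route the equivalence through the permutative Gray-monoids via the chain $\Ho(\SMBicat_{s},\cateq)\simeq \Ho(\PGM,\cateq)\cong\Ho\mathsf{PGM}\simeq\Ho\SMB$, with \cref{prop:smbs=smbps} supplying the middle isomorphism and the quasistrictification zigzag of \cref{cohqs2cats,thm:cohqs2cats2} supplying the two flanking equivalences (the paper phrases these simply as ``every object is isomorphic to a permutative Gray-monoid''). Your version spells out more carefully why $(-)^{qst}$ descends to the localization and why the natural zigzag of strict biequivalences becomes a natural isomorphism there, but the underlying argument is the same.
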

\begin{proof}
  In both cases, every object is isomorphic to a permutative
  Gray-monoid, and thus we have equivalences for the vertical
  inclusions of subcategories in the following display.  
  \[
    \begin{tikzpicture}[x=40mm,y=20mm]
      \draw[tikzob,mm] 
      (0,0) node (a) {\Ho \SMB}
      (1,0) node (b) {\Ho ( \SMBicat_{s}, \cateq)}
      (0,1) node (c) {\Ho\mathsf{PGM}}
      (1,1) node (d) {\Ho ( \PGM, \cateq)}
      ;
      \path[tikzar,mm] 
      (c) edge[swap] node {\hty} (a)
      (d) edge node {\hty} (b)
      (a) edge[dashed] node {} (b)
      (c) edge node {\Phi} (d)
      ;
    \end{tikzpicture}
  \]
  The isomorphism $\Phi$ from \cref{prop:smbs=smbps} therefore induces
  an equivalence for the dashed arrow shown above.
\end{proof}

\begin{lem}\label{lem:PGM-to-SMB-equiv}
  The inclusions
  \[
    \PGM \hookrightarrow \SMBicat_{s}, \quad \Pic\PGM \hookrightarrow
    \Pic\Bicat_{s}
  \]
  induce equivalences of homotopy theories
  \[
    (\PGM, \cW \cap \PGM) \hookrightarrow (\SMBicat_{s}, \cW), 
  \]
  \[
    (\Pic\PGM, \cW \cap \Pic\PGM) \hookrightarrow (\Pic\Bicat_{s}, \cW)
  \]
  for any class of morphisms $\cW$ that includes all biequivalences.
\end{lem}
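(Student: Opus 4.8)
The plan is to prove that the two inclusions induce equivalences of homotopy theories by exhibiting, in each case, a functor the other way together with natural weak equivalences connecting the two composites to the respective identities; this is the standard criterion for an equivalence of homotopy theories (a Dwyer--Kan equivalence via a homotopical adjoint/deformation retraction). The key tool is already in hand: by \cref{cohqs2cats}, quasistrictification $(-)^{qst}$ is an endofunctor of $\SMBicat_s$ whose image lands in $\PGM$, and by part (ii) of that theorem the natural strict biequivalences $\cB \leftarrow \cB^c \to \cB^{qst}$ relate each $\cB$ to its strictification through maps that are biequivalences and hence in $\cW$.

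First I would treat the unrestricted inclusion $\PGM \hookrightarrow \SMBicat_s$. The functor in the other direction is $(-)^{qst}$, which by \cref{cohqs2cats}(i) restricts to a functor $\SMBicat_s \to \PGM$. For an object $\cB \in \SMBicat_s$, the zigzag $\cB \leftarrow \cB^c \to \cB^{qst}$ consists of natural strict biequivalences, so it is a natural zigzag in $\cW$ connecting the composite $\SMBicat_s \to \PGM \hookrightarrow \SMBicat_s$ to the identity on $\SMBicat_s$. On the other side, if $\cB$ is already a permutative Gray-monoid, then \cref{thm:cohqs2cats2}(i) supplies the strict biequivalence $\nu \cn \cB^{qst} \to \cB$, again natural in strict functors and landing in $\cW \cap \PGM$, connecting the composite $\PGM \hookrightarrow \SMBicat_s \to \PGM$ to the identity on $\PGM$. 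Since all the connecting maps are biequivalences and hence lie in $\cW$ (respectively $\cW \cap \PGM$) by hypothesis, these natural weak equivalences witness that the inclusion is an equivalence of homotopy theories; the fact that $\cW$ is defined compatibly on both categories (as $\cW \cap \PGM$) guarantees that the inclusion both preserves and reflects weak equivalences.

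For the Picard case, the only additional point to verify is that quasistrictification preserves invertibility of cells, so that $(-)^{qst}$ restricts to a functor $\Pic\Bicat_s \to \Pic\PGM$ and the entire argument transports verbatim. This follows because the strict biequivalences $\cB \leftarrow \cB^c \to \cB^{qst}$ are in particular biequivalences of underlying bicategories, and a biequivalence reflects and preserves the property that all $0$-, $1$-, and $2$-cells are invertible (\cref{defn:invertible2}): invertibility of $2$-cells is preserved under any equivalence of bicategories, invertibility of $1$-cells is exactly the property of being an internal equivalence and is preserved and reflected by biequivalences, and invertibility of objects is preserved because a symmetric monoidal biequivalence takes a weak inverse of $x$ to a weak inverse of its image. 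Hence $\cB^{qst}$ is a strict Picard 2-category whenever $\cB$ is a Picard bicategory, and the same natural zigzags as above, now lying in $\Pic\PGM$ and $\Pic\Bicat_s$ respectively, establish the second equivalence of homotopy theories.

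I expect the main obstacle to be purely bookkeeping rather than conceptual: one must be careful that the naturality of the connecting transformations $(-)^c \impl \id$, $(-)^c \impl (-)^{qst}$, and $\nu$ holds with respect to the strict functors that constitute the morphisms of $\SMBicat_s$ and $\PGM$ (which it does, by \cref{cohqs2cats} and \cref{thm:cohqs2cats2}), and that one is genuinely producing natural transformations of functors between the relevant homotopical categories rather than merely objectwise zigzags. Verifying the Picard preservation claim for objects---that invertibility at the object level survives strictification---is the one spot requiring a short argument, but it is immediate from the definition of symmetric monoidal biequivalence and \cref{defn:invertible2}(3).
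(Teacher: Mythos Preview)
Your proposal is correct and takes essentially the same approach as the paper: use $(-)^{qst}$ as the inverse functor, invoke the natural strict biequivalences $\cB \leftarrow \cB^c \to \cB^{qst}$ from \cref{cohqs2cats} and $\nu\cn\cB^{qst}\to\cB$ from \cref{thm:cohqs2cats2} as the connecting zigzags in $\cW$, and observe that being Picard is invariant under biequivalences to handle the second inclusion. The paper is slightly more terse, packaging the conclusion via Rezk nerves (citing \cite[Corollary 2.9]{GJO2017KTheory}) rather than spelling out the two composites, but the content is the same.
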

\begin{proof}\proofof{lem:PGM-to-SMB-equiv}\marginref{cohqs2cats,thm:cohqs2cats2}
  By \cref{cohqs2cats,thm:cohqs2cats2}, we have natural
  strict biequivalences
  \[
  \cA \longleftarrow \cA^{c} \longrightarrow \cA^{qst}
  \]
  for any symmetric monoidal bicategory $\cA$, and a natural strict
  biequivalence $\cB^{qst} \to \cB$ for any permutative Gray-monoid
  $\cB$.  This implies that the inclusion and quasistrictification
  induce weak equivalences between the Rezk nerves of
  $(\PGM, \cW \cap \PGM)$ and $(\SMBicat_{s}, \cW)$, and hence give an
  equivalence of homotopy theories (see, e.g., \cite[Corollary
  2.9]{GJO2017KTheory}).  The same argument implies the final
  equivalence of homotopy theories because the property of being
  Picard is invariant under biequivalences.
\end{proof}

The most important case of such a $\cW$ is the class of
$P_2$-equivalences we define now.

\begin{defn}\label{defn:p2-eq-SMB}
  A functor (of any type) $F\cn \cA \to \cB$ of bicategories is a
  \emph{$P_2$-equivalence} if the induced map of topological 
	spaces $NF\cn N\cA
  \to N\cB$ is a $P_2$-equivalence, i.e., induces an isomorphism
	on $\pi_n$ for $n=0, 1, 2$ and all choices of basepoint.
\end{defn}

\section{Symmetric monoidal structures from operad actions}
\label{sec:operads}

In this section we describe how to extract symmetric monoidal
structure from an operad action on a 2-category.  We describe the
motivating example of algebras over the Barratt-Eccles operad, but
then abstract the essential features to a general theory.  Our main
applications appear in \cref{sec:gp-cpn}, where we use this theory and
the topological group-completion theorem of May \cite{May1974Einfty}
to deduce information about the fundamental 2-groupoid of a
group-completion.

\subsection{Background about operads}
\label{sec:background}

\begin{defn}
  Let $(\sV,\star,e)$ be a symmetric monoidal category. An
  \emph{operad} $P$ in $\sV$ is a sequence $\{P(n)\}_{n\geq 0}$ of
  objects in $\sV$ such that $P(n)$ has a (right) $\Si_n$-action,
  together with morphisms
  \[
  \ga \cn P(n) \star P(k_1) \star \cdots \star P(k_n) \rtarr 
  P(k_1 + \cdots +k_n)
  \]
  and 
  \[
  \idop \cn e \rtarr P(1)
  \]
  that are appropriately equivariant and compatible.
  See \cite{may72geo} or \cite{Yau2016Colored} for a complete description.
    
  A \emph{map} of operads $f\cn P \rtarr Q$ is given by a $\Si_n$-map
  $f_n\cn P(n) \rtarr Q(n)$ for each $n\geq 0$ compatible with the
  operations and the identity.
    
  A \emph{$P$-algebra} is given by a pair $(X,\mu)$, where $X$ is an
  object of $\sV$, and $\mu$ is a collection of morphisms
  \[
  \mu_n \cn P(n) \star X^{\star n} \rtarr X
  \]
  in $\sV$ that are appropriately equivariant and compatible with
  $\ga$ and $\idop$. A morphism of $P$-algebras
  $(X,\mu) \rtarr (X',\mu')$ is given by a morphism $g\cn X \rtarr X'$
  compatible with the maps $\mu_n$ and $\mu'_n$.
\end{defn}    

In this paper we will be concerned with operads in
$(\sset, \times,*)$ and $(\Top,\times,*)$, as well as several variants
for 2-categories, including $(\iicat,\times,*)$, $(\iicat,\otimes,*)$,
and $(\iicatps,\times,*)$.

\begin{notn}
  We let $\PAlg(\sV,\star)$
 denote the category of $P$-algebras in
  $(\sV, \star)$ and their morphisms.\footnote{We do not include the
    monoidal unit in the notation as in all of our cases it will be
    the terminal object.} If there is no confusion over the ambient
  symmetric monoidal category we also write $\PAlg$.
\end{notn}

We now recall and fix notation for standard transfers of operadic
structures; see e.g., \cite{JY2015Foundation,Yau2016Colored}.
\begin{lem}
  A map of operads $f\cn P \rtarr Q$ induces a functor
  \[
  f^* \cn Q\mh\mb{Alg} \rtarr \PAlg.
  \]
  that sends a $Q$-algebra $(X,\mu)$ to the $P$-algebra $(X,\nu)$,
  where $\nu_n$ is given by the composite
  \[
    P(n)\star X^{\star n} \fto{f_n\star \id} Q(n)\star X^{\star n}
    \fto{\mu_n} X.
  \]
  If $f$ and $g$ are composable maps of operads, then $(g\circ f)^*=f^*\circ g^*$.
\end{lem}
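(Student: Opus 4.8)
The plan is to verify three things in turn: that the data $(X,\nu)$ is genuinely a $P$-algebra, that $f^*$ is functorial on $Q$-algebra morphisms, and that the assignment $f \mapsto f^*$ is contravariantly functorial in the operad map. Each verification is a diagram chase that combines the axioms making $f$ a map of operads with the axioms making $(X,\mu)$ a $Q$-algebra, using throughout that $\star$ is a bifunctor.

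First I would check that $(X,\nu)$ satisfies the $P$-algebra axioms. Equivariance of $\nu_n$ under $\Si_n$ follows by pasting the $\Si_n$-equivariance of $f_n$ (part of $f$ being a map of operads) with the equivariance of $\mu_n$. Compatibility with the unit $\idop\cn e \to P(1)$ reduces, after inserting $f_1$, to the relation $f_1 \circ \idop^P = \idop^Q$ together with the unit axiom for $\mu$. The compatibility with operadic composition $\ga$ is the one place requiring care: one must show that $\nu_{k_1+\cdots+k_n}\circ(\ga^P \star \id)$ agrees with the iterated application of the $\nu_{k_i}$ and $\nu_n$. Expanding $\nu$ as $\mu\circ(f\star\id)$ on both sides, the key input is the operad-map identity
\[
  f_{k_1+\cdots+k_n}\circ \ga^P = \ga^Q\circ (f_n\star f_{k_1}\star\cdots\star f_{k_n}),
\]
which lets one slide all the instances of $f$ past $\ga$; the remaining diagram is then exactly the $\ga^Q$-compatibility axiom for the $Q$-algebra $(X,\mu)$. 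The only real bookkeeping obstacle is tracking how the single $f_{k_1+\cdots+k_n}$ on the source matches the tuple $f_n\star f_{k_1}\star\cdots\star f_{k_n}$ after reassociating the monoidal product, but this is dictated entirely by the two axioms just cited.

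Next I would address functoriality. For a morphism $g\cn (X,\mu)\to(X',\mu')$ of $Q$-algebras, the equation $g\circ\mu_n = \mu'_n\circ(\id\star g^{\star n})$ holds by hypothesis; composing with $f_n\star\id$ and applying the interchange law for $\star$ (both resulting composites reduce to $f_n\star g^{\star n}$) yields $g\circ\nu_n = \nu'_n\circ(\id\star g^{\star n})$, so $g$ is a map of $P$-algebras. Since $f^*$ leaves the underlying object and underlying morphism unchanged, it visibly preserves identities and composition, hence is a functor. Finally, for composable operad maps $f\cn P\to Q$ and $g\cn Q\to R$ and any $R$-algebra $(X,\mu)$, the structure map produced by $f^*g^*$ is $\mu_n\circ(g_n\star\id)\circ(f_n\star\id)$, which equals $\mu_n\circ((g_n\circ f_n)\star\id)$ again by bifunctoriality of $\star$; as $(g\circ f)_n = g_n\circ f_n$ this is precisely the structure map produced by $(g\circ f)^*$. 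The agreement on morphisms is immediate, so $(g\circ f)^* = f^*\circ g^*$ on the nose.
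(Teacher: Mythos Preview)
Your proof is correct and complete: you verify the $P$-algebra axioms for $(X,\nu)$, check that $Q$-algebra morphisms remain $P$-algebra morphisms, and confirm contravariant functoriality in the operad map, each by the expected combination of the operad-map axioms with the $Q$-algebra axioms and bifunctoriality of $\star$.

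The paper, however, does not prove this lemma at all. It is stated as standard background immediately after the sentence ``We now recall and fix notation for standard transfers of operadic structures; see e.g., \cite{JY2015Foundation,Yau2016Colored},'' and no proof is given. So there is nothing to compare against; your argument simply supplies the routine verification that the paper delegates to the literature.
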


\begin{lem}\label{lem:operad_monoidal} 
  Let $P$ be an operad in $(\sV,\star)$, and let
  $F,G\cn (\sV,\star) \rtarr (\sW,\lozenge)$ be lax symmetric monoidal
  functors, and $\al \cn F \Rightarrow G$ a monoidal natural
  transformation. Then
  \begin{enumerate}
  \item $FP$ is an operad in $(\sW,\lozenge)$;
  \item $F$ induces a functor
    \[
      F\cn \PAlg(\sV,\star) \rtarr F\PAlg(\sW,\lozenge)
    \]
    that sends $(X,\mu)$ to $(FX,\nu)$, where $\nu_n$ is given by the
    composite
    \[
      FP\lozenge (FX)^{\lozenge n} \rtarr F(P\star X^{\star n}) \fto{F\mu_n} FX;
    \]
    and
  \item $\al$ induces a map of operads $\al\cn FP \rtarr GP$, and a
    natural transformation $\al\cn F \Rightarrow \al^*\circ G$ of
    functors
    \[
      \PAlg(\sV,\star) \rtarr F\PAlg(\sW,\lozenge),
    \]
    whose component at $(X,\mu)$ is given by $\al_X$.
  \end{enumerate}
\end{lem}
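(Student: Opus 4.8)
The plan is to produce, in each part, the required structure maps by conjugating the operad structure of $P$ (respectively the algebra structure of $(X,\mu)$) through the lax constraints of $F$ and $G$, and then to verify the defining axioms by reducing them to the axioms for $P$ together with the coherence axioms of a lax symmetric monoidal functor. Throughout I write $\phi_0 \cn e_\sW \to F e_\sV$ and $\phi \cn FA \lozenge FB \to F(A \star B)$ for the unit and multiplication constraints of $F$, and I abbreviate by $\phi^{(n)}$ the canonical $n$-fold constraint $FA_1 \lozenge \cdots \lozenge FA_n \to F(A_1 \star \cdots \star A_n)$; the associativity axiom for a lax monoidal functor guarantees that $\phi^{(n)}$ is independent of bracketing, so these iterated constraints are unambiguous.

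For part (1) I would equip $FP(n) = F(P(n))$ with the $\Si_n$-action obtained by applying $F$ to the $\Si_n$-action on $P(n)$, define the unit $\idop$ of $FP$ as the composite $e_\sW \fto{\phi_0} F e_\sV \fto{F\idop} FP(1)$, and define the composition by
\[
\ga \cn FP(n) \lozenge FP(k_1) \lozenge \cdots \lozenge FP(k_n) \fto{\phi^{(n+1)}} F\bigl(P(n) \star P(k_1) \star \cdots \star P(k_n)\bigr) \fto{F\ga} FP(k_1 + \cdots + k_n).
\]
Associativity and unitality of this $\ga$ would follow from the corresponding axioms for $P$ after rewriting the two relevant pasting composites using naturality of $\phi$ and the associativity and unit coherences of $F$. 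The equivariance axioms are the only place the symmetry of $F$ enters: to permute or reindex the tensor factors $FP(k_i)$ one uses precisely the compatibility of $\phi$ with the symmetry isomorphisms of $\star$ and $\lozenge$, which is the defining condition for $F$ to be \emph{symmetric} monoidal rather than merely monoidal.

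For part (2), given a $P$-algebra $(X,\mu)$, I would take $\nu_n$ to be the displayed composite $FP(n) \lozenge (FX)^{\lozenge n} \fto{\phi^{(n+1)}} F(P(n) \star X^{\star n}) \fto{F\mu_n} FX$. That $(FX,\nu)$ is an $FP$-algebra would follow by the same mechanism: compatibility of $\nu$ with the operadic composition and the unit of $FP$ reduces to the compatibility of $\mu$ with $\ga$ and $\idop$ once the constraints are slid across using naturality of $\phi$ and coherence, while equivariance again uses symmetry of $F$. Functoriality is immediate: for a morphism $g \cn (X,\mu) \to (X',\mu')$ of $P$-algebras, naturality of $\phi$ together with the identity $\mu'_n \circ (\id \star g^{\star n}) = g \circ \mu_n$ shows that $Fg$ intertwines $\nu$ and $\nu'$, so $Fg$ is a morphism of $FP$-algebras, and $F$ evidently preserves identities and composites.

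For part (3), I would observe that $\al \cn FP \to GP$, with components $\al_{P(n)}$, is a map of operads: the components are $\Si_n$-equivariant by naturality of $\al$, and their compatibility with the operadic composition and unit maps of $FP$ and $GP$ follows from the two axioms defining a \emph{monoidal} natural transformation, namely $\al_{A \star B} \circ \phi^F = \phi^G \circ (\al_A \lozenge \al_B)$ and $\al_{e_\sV} \circ \phi^F_0 = \phi^G_0$, applied levelwise. Composing with the restriction functor $\al^*$ of the preceding lemma then yields the asserted natural transformation $F \Rightarrow \al^* \circ G$: its component at $(X,\mu)$ is $\al_X$, which is a morphism of $FP$-algebras $(FX,\nu^F) \to \al^*(GX,\nu^G)$ by the same monoidal-naturality identity applied to $\mu_n$, and naturality in $(X,\mu)$ is inherited directly from naturality of $\al$. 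The only genuinely delicate point across all three parts is the repeated appeal to symmetry in establishing equivariance; everything else is a coherence-driven diagram chase.
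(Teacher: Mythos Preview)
Your proof is correct and is exactly the standard argument for this well-known fact. The paper does not actually prove this lemma; it is stated without proof as a standard transfer result, with a pointer to \cite{JY2015Foundation,Yau2016Colored} given just before the preceding lemma, so there is no paper-internal proof to compare against.
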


\begin{cor}\label{cor:operads-2cat-variants}
  Any operad $P$ in $(\iicat,\times)$ gives rise to an operad (with
  the same underlying sequence of objects) in $(\iicat, \otimes)$ and
  $(\iicatps,\times)$.

  There are inclusions
  \[
  \PAlg(\iicat,\times) \rtarr \PAlg(\iicat,\otimes) \rtarr
  \PAlg(\iicatps,\times).
  \]
\end{cor}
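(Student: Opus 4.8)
The plan is to obtain the corollary as two successive applications of \cref{lem:operad_monoidal} to the lax symmetric monoidal functors recorded in \cref{rem:id-is-lax-monoidal}. Recall from that remark that the identity on underlying $2$-categories is a lax symmetric monoidal functor
\[
  \Id\cn (\iicat, \times) \rtarr (\iicat, \otimes)
\]
with lax constraint the natural $2$-functor $i\cn \cA \otimes \cB \to \cA \times \cB$, and that the inclusion
\[
  J\cn (\iicat, \otimes) \rtarr (\iicatps, \times)
\]
is lax symmetric monoidal with constraint the cubical comparison $c\cn \cA \times \cB \to \cA \otimes \cB$ of \cref{thm:cubical-Gray}.

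First I would feed $F = \Id$ into \cref{lem:operad_monoidal}. Since $\Id$ is literally the identity on underlying $2$-categories, $\Id P$ has the same underlying sequence $\{P(n)\}$ as $P$; part (1) of the lemma thus makes $P$ an operad in $(\iicat, \otimes)$, its structure maps being the original $\ga$ and $\idop$ reindexed through the constraint $i$. Part (2) then yields the induced functor $\PAlg(\iicat, \times) \rtarr \PAlg(\iicat, \otimes)$, sending $(X, \mu)$ to $(X, \nu)$ where $\nu_n$ is $\mu_n$ precomposed with the appropriate iterate of $i$. Repeating verbatim with $F = J$ makes $P$ an operad in $(\iicatps, \times)$ and produces the second functor $\PAlg(\iicat, \otimes) \rtarr \PAlg(\iicatps, \times)$; composing the two gives the displayed chain. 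I do not need part (3) of the lemma.

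The one conceptually substantive point is why the second step must land in pseudofunctor-algebras rather than $2$-functor-algebras: here each $\nu_n$ is the $2$-functor $\mu_n$ precomposed with an iterate of $c$, and $c$ is only a normal (cubical) pseudofunctor, not a $2$-functor, so $\nu_n$ is genuinely a pseudofunctor. This is exactly why the codomain is $(\iicatps, \times)$, and it is what keeps the statement from being tautological. Finally, to justify the word ``inclusion'' I would note that both functors are the identity on underlying objects and morphisms, hence faithful and injective on objects: for the first, $\mu_n$ is recovered from $\nu_n$ by precomposing with the iterate of $c$, using $i \circ c = \id$; for the second, $\mu_n$ is recovered by the universal property of \cref{thm:cubical-Gray}, since $\nu_n$ is cubical and composition with $c$ is a bijection from $2$-functors out of the Gray tensor product onto cubical functors out of the cartesian product. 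I expect the only real bookkeeping to be tracking the iterated constraints, which is routine.
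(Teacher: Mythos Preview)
Your proposal is correct and takes essentially the same approach as the paper: both apply \cref{lem:operad_monoidal} to the two lax symmetric monoidal functors of \cref{rem:id-is-lax-monoidal}. The paper's proof is a single sentence invoking these, while you additionally unpack why the second step lands in $(\iicatps,\times)$ and why the functors deserve the name ``inclusion,'' which is helpful elaboration but not a different argument.
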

\begin{proof}\proofof{cor:operads-2cat-variants}\marginref{lem:operad_monoidal} 
  Following \cref{rem:id-is-lax-monoidal}, we can use the identity
  functor $(\iicat, \times) \rtarr (\iicat, \otimes)$ and the
  inclusion $(\iicat,\otimes) \rtarr (\iicatps,\times)$ to transfer
  the algebra structures.
\end{proof}

\begin{rmk}
  Given the relationship between cubical functors and the Gray tensor
  product, we can specify the objects in the categories of algebras
  above as follows. In all cases, an algebra is given by a pair
  $(X,\mu)$, where $X$ is a 2-category, and $\mu$ is a collection of
  morphisms $\mu_n \cn P(n) \times X^n \rtarr X$, which are 2-functors
  if working in $(\iicat,\times)$, cubical functors if working in
  $(\iicat,\otimes)$, or pseudofunctors if working in
  $(\iicatps,\times)$.
  In the first two cases, a map of algebras is given by a 2-functor
  $X \rtarr X'$ commuting with $\mu$, but for the latter case, a map
  of algebras is given by a pseudofunctor $X\rtarr X'$.
\end{rmk}

\begin{notn}\label{notn:P-2cat}
  Let $P$ be an operad in $(\iicat, \times)$.  Let $P\mh\iicat$ denote
  the subcategory of $\PAlg(\iicatps,\times)$ given by all objects and
  those morphisms whose underlying pseudofunctor $X\rtarr X'$ is a 2-functor.
\end{notn}

\begin{defn}[The Barratt-Eccles operad]\label{defn:operadO}
  Let $\mathcal{O}$ denote the operad in $(\cat, \times)$ with
  $\mathcal{O}(n)$ being the translation category for $\Sigma_n$: the
  objects of $\mathcal{O}(n)$ are the elements of the symmetric group
  $\Sigma_n$, and there is a unique isomorphism between any two
  objects. By abuse of notation, we also denote by $\cO$ the operad in
  $(\iicat, \times)$ obtained by adding identity 2-cells. Note that
  $\cO$ is also an operad in $(\iicat,\otimes)$ and
  $(\iicatps,\times)$.
  
  Because the nerve and geometric realization functors are strong
  monoidal, we have an operad $|N\cO| = B\cO$ in $\Top$.  Likewise, if
  $\cA$ is an $\cO$-algebra in ($\iicatps$,$\times$), then $B\cA$ is a
  $B\cO$-algebra in $\Top$.
  \marginproof{defn:operadO}{notn:B,prop:classical-monoidal-functors}
\end{defn}

\begin{rmk}\label{rmk:peter-invented-o}
  The operad $B\cO$ in $\Top$ was used implicitly in
  \cite{Bar1971FreeGroup}.  As an operad in $\cat$, $\cO$ was
  independently introduced by May in \cite{may72geo,May1974Einfty}. In
  \cite{May1974Einfty}, May shows that a permutative category is an
  $\cO$-algebra [\textit{loc.\ cit.} Lemmas 4.3-4.5] and that $B\cO$
  is an $E_\infty$ operad [\textit{loc.\ cit.} Lemma 4.8].
\end{rmk}

\subsection{Symmetric monoidal structures from arbitrary operads}
\label{sec:choice-of-mult}

The relationship between $\cO$-algebras in $\Cat$ and permutative
categories extends to the 2-categorical level.  We describe this now,
and then abstract the key features for general operads.  To begin,
note that an operad $P$ in $(\iicat, \times)$ induces a different
monad $P_{\otimes}$ on $\iicat$
using a combination of the cartesian
product and the Gray tensor product. Explicitly, $P_\otimes$ is
defined via the formula
\[
X \mapsto \coprod_{n\geq 0} P(n) \times_{\Sigma_n} X^{\otimes n}
\]
using the natural 2-functor
\[
(\cA \times \cB) \otimes (\cA' \times \cB') \to (\cA \otimes \cA')
\times (\cB \otimes \cB') \to (\cA \times \cA') \times (\cB \times
\cB').
\]

\begin{prop}\label{prop:OAlg-gray-is-PGM}
  The category $\cO_{\otimes} Alg$ of algebras for the monad
  $\cO_{\otimes}$ on $\iicat$ is isomorphic to $\PGM$.
\end{prop}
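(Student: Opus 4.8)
The plan is to unpack the structure of an algebra for the monad $\cO_\otimes$ and match it, datum by datum and axiom by axiom, with the structure of a permutative Gray-monoid, thereby exhibiting an isomorphism of categories directly on objects and morphisms. This is the exact 2-categorical analogue of May's identification of $\cO$-algebras in $(\cat,\times)$ with permutative categories \cite{May1974Einfty}, lifted one dimension and carried out with the Gray tensor product in place of the cartesian product. Conceptually the content is that the monad $\cO_\otimes$ agrees with the monad whose algebras are permutative Gray-monoids (the latter being monadic over $\iicat$ by \cref{prop:defn-PGM}), but since we want an isomorphism rather than merely an equivalence, I will construct the comparison explicitly.

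First I extract the permutative Gray-monoid structure from an $\cO_\otimes$-algebra $(X,\mu)$, whose components are 2-functors $\mu_n\cn \cO(n)\times_{\Sigma_n} X^{\otimes n}\to X$. The arity-zero component $\mu_0\cn *\to X$ picks out the unit object $e$. Because $\cO(2)$ is the translation (chaotic) category on $\Sigma_2=\{1,\tau\}$, the restriction of $\mu_2$ to the object $1$ is a 2-functor $X\otimes X\to X$, which we take to be $\oplus$; note that this is genuinely defined on the Gray tensor product, exactly as required by \cref{defn:gray-monoid}. Equivariance identifies the restriction at $\tau$ with $\oplus\circ\tau$, and the unique isomorphism $1\to\tau$ in $\cO(2)$ is sent by $\mu_2$, via \cref{lem:psfun-out-of-product}, to a 2-natural isomorphism $\beta\cn \oplus\Rightarrow\oplus\circ\tau$. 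It is 2-natural, rather than merely pseudonatural, precisely because $\cO(2)$ has only identity 2-cells and $\mu_2$ is a 2-functor out of a Gray product.

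Next I verify the axioms. Operadic unitality and associativity of $\mu$ (the structure maps $\idop$ and $\ga$) force $\oplus$ to be strictly unital and associative, so $(X,\oplus,e)$ is a Gray-monoid. The two axioms for $\beta$ in \cref{defn:pgm} then follow from the operad axioms in arities up to three together with the fact that each $\cO(n)$ is chaotic: any two parallel strings of structure isomorphisms built from $\beta$ have the same source and target in $X$ and are images of parallel isomorphisms in the relevant $\cO(n)$, which necessarily coincide. This ``freeness of coherence'' also explains the redundancy noted after \cref{defn:pgm}. Conversely, given a permutative Gray-monoid I define $\mu_n$ on an object $\sigma\in\cO(n)$ to be the iterate of $\oplus$ multiplying the $n$ inputs in the order dictated by $\sigma$, and on the unique isomorphism $\sigma\to\sigma'$ to be the composite of instances of $\beta$ realizing the permutation $\sigma'\sigma^{-1}$; the two $\beta$-axioms guarantee independence of the chosen factorization into transpositions, so $\mu_n$ is well defined. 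The two assignments are visibly mutually inverse, and a 2-functor $F\cn X\to X'$ is a map of $\cO_\otimes$-algebras if and only if it strictly preserves $e$, $\oplus$, and $\beta$, i.e.\ is a strict functor of permutative Gray-monoids in the sense of \cref{defn:strict-functor-gray-mon}.

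The main obstacle is the careful bookkeeping of the Gray tensor product. One must track how $\mu$ behaves on the generating interchange 2-cells $\Si_{f,g}$ of $X^{\otimes n}$ and confirm that the 2-naturality of $\beta$ and the cubical---as opposed to strictly 2-functorial---behaviour of $\oplus$ on $X\times X$ are matched on both sides; this is exactly where the choice of the Gray tensor product in the definition of $\cO_\otimes$, rather than the cartesian product, is essential. The remaining labor is the verification that the full list of permutative Gray-monoid axioms is equivalent to operadic coherence, for which the chaotic structure of the $\cO(n)$ does most of the work.
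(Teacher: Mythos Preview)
Your proposal is correct and follows essentially the same approach as the paper: both extract the unit from arity $0$, the pair $(\oplus,\beta)$ from arity $2$ via the strict part of \cref{lem:psfun-out-of-product}, and then argue that the operad/monad axioms match the permutative Gray-monoid axioms. The paper's proof is terser---it records the bijection on data and leaves the axiom-matching as ``straightforward to verify''---whereas you expand on that verification and spell out the inverse construction of $\mu_n$ for general $n$, but this is elaboration rather than a different strategy.
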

\begin{proof}\proofof{prop:OAlg-gray-is-PGM}\marginref{cor:operads-2cat-variants}
  Note that objects $e \in \cA$ are in bijection with 2-functors
  $\cO(0) \to \cA$, and that 2-functors
  $\oplus \cn \cA \otimes \cA \to \cA$ together with a 2-natural
  isomorphism
  \[
  \begin{tikzpicture}[x=1mm,y=1mm]
    \draw[tikzob,mm] 
    (0,0) node (00) {\cA \otimes \cA}
    (25,0) node (10) {\cA \otimes \cA}
    (12.5,-10) node (01) {\cA}
    ;
    \path[tikzar,mm] 
    (00) edge node{\tau} (10)
    (10) edge node{\oplus} (01)
    (00) edge[swap] node{\oplus} (01)
    ;
    \draw[tikzob,mm] 
    (12.5,-4) node {\Anglearrow{40} \beta}
    ;
  \end{tikzpicture}
  \]
  where $\tau \cn \cA \otimes \cA \to \cA \otimes \cA$ is the symmetry
  isomorphism in $\IICat$ for the Gray tensor product are in bijection
  with 2-functors
  \[
    \tilde{\oplus} \cn \cO(2) \times_{\Sigma_2} \cA^{\otimes 2} \to \cA
  \]
  using the strict parts of \cref{lem:psfun-out-of-product}. It is now
  straightforward to verify that the axioms for a permutative
  Gray-monoid are the same as those for an algebra over
  $\cO_{\otimes}$.
\end{proof}

Using \cref{cor:operads-2cat-variants} with
\cref{prop:OAlg-gray-is-PGM}, we can also regard a permutative
Gray-monoid as an algebra with respect to the cartesian product.  We
will use this implicitly in our work below.
\begin{cor}\label{prop:PGM-is-OAlg}
  Every permutative Gray-monoid is an algebra for the operad $\cO$
  acting on $(\iicatps,\times)$.
\end{cor}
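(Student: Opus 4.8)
The plan is to obtain this as a short formal consequence of the two preceding results, applied objectwise. By \cref{prop:OAlg-gray-is-PGM} a permutative Gray-monoid $\cC$ is precisely an algebra for the monad $\cO_\otimes$; unwinding the monad multiplication exhibits this as the data of an $\cO$-algebra in $(\iicat,\otimes)$, i.e.\ an object of $\PAlg(\iicat,\otimes)$. The second inclusion of \cref{cor:operads-2cat-variants}, $\PAlg(\iicat,\otimes)\rtarr\PAlg(\iicatps,\times)$, then places $\cC$ in the category of $\cO$-algebras in $(\iicatps,\times)$, which is exactly the desired conclusion. So the whole argument is the composite of these two identifications.

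To see concretely what the resulting algebra structure is, I would make the transfer explicit rather than merely citing the inclusion. As an $\cO_\otimes$-algebra, $\cC$ carries $\Sigma_n$-equivariant $2$-functors $a_n\cn \cO(n)\times\cC^{\otimes n}\rtarr\cC$ satisfying the monad-algebra axioms. I would then define the required structure maps $\mu_n\cn \cO(n)\times\cC^{\times n}\rtarr\cC$ by precomposing $a_n$ with $\id\times c$, where $c\cn\cC^{\times n}\rtarr\cC^{\otimes n}$ is the canonical cubical comparison of \cref{thm:cubical-Gray}. Each $\mu_n$ is then a composite of a pseudofunctor with a $2$-functor, hence a pseudofunctor, so $(\cC,\mu)$ is an object of $\PAlg(\iicatps,\times)$. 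This is precisely the transfer of algebra structure along the lax symmetric monoidal inclusion $(\iicat,\otimes)\rtarr(\iicatps,\times)$ whose constraint is $c$ (\cref{rem:id-is-lax-monoidal}), as packaged by \cref{lem:operad_monoidal} and \cref{cor:operads-2cat-variants}.

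The one real point of friction, and the step I expect to need the most care, is the first identification: that the monad $\cO_\otimes$, which attaches the arity $\cO(n)$ by cartesian product but takes Gray-tensor powers of $\cC$, genuinely presents the category $\PAlg(\iicat,\otimes)$ of $\cO$-algebras for the Gray tensor product. Reconciling these uses the universal property of the Gray tensor product (\cref{thm:cubical-Gray}) to pass between $2$-functors out of $\cO(n)\times\cC^{\otimes n}$ and cubical functors $\cO(n)\times\cC^{\times n}\rtarr\cC$, together with the comparison $2$-functors $c$ and $i$ of \cref{rem:id-is-lax-monoidal}; with that translation in hand, the equivariance and operad-algebra axioms for the $\mu_n$ follow from the monad-algebra axioms for the $a_n$ and the coherence of the cubical comparisons. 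Since the direct construction of the previous paragraph already produces an object of $\PAlg(\iicatps,\times)$ from any $\cO_\otimes$-algebra, the substance of the corollary is exactly this coherence bookkeeping, and everything else is the formal, already-established inclusion of \cref{cor:operads-2cat-variants}. In effect the corollary amounts to little more than repackaging \cref{prop:OAlg-gray-is-PGM}.
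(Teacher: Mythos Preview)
Your proposal is correct and follows exactly the route the paper takes: the paper's proof is nothing more than the sentence ``Using \cref{cor:operads-2cat-variants} with \cref{prop:OAlg-gray-is-PGM}, we can also regard a permutative Gray-monoid as an algebra with respect to the cartesian product,'' and you have unpacked precisely this composition. Your explicit description of the transfer via $\id\times c$ and your flagging of the bookkeeping needed to match $\cO_\otimes$-algebras with $\cO$-algebras in $(\iicat,\otimes)$ are more detail than the paper provides, but the approach is the same.
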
 
\marginproof{prop:PGM-is-OAlg}{cor:operads-2cat-variants,prop:OAlg-gray-is-PGM}

Although we have no use for it here, there is an analogous result for
$\cO$-algebras in $(\iicat,\times)$ and the stricter notion of
\emph{permutative 2-category} described in \cite{GJO2017KTheory},
which we state in the following proposition.  This is the
$\cat$-enriched version of the statement that permutative
\emph{categories} are precisely the $\cO$-algebras in $(\cat,\times)$
(see \cref{rmk:peter-invented-o}).

\begin{prop}\label{prop:OAlg-is-P2Cat}
  The category of $\cO$-algebras in $(\iicat, \times)$ is
  isomorphic to the category $\piicat$ of permutative 2-categories and
  strict functors of such.
\end{prop}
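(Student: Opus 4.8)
The plan is to mirror the proof of \cref{prop:OAlg-gray-is-PGM}, replacing the Gray tensor product $\otimes$ by the cartesian product $\times$ throughout, so that the multiplication extracted from the algebra structure becomes a strict $2$-functor on $\cA \times \cA$ rather than merely a cubical functor. Concretely, an $\cO$-algebra in $(\iicat,\times)$ is an algebra for the monad
\[
X \mapsto \coprod_{n \geq 0} \cO(n) \times_{\Sigma_n} X^{\times n},
\]
so the entire structure is encoded by $2$-functors $\mu_n \cn \cO(n) \times_{\Sigma_n} \cA^{\times n} \to \cA$ compatible with the operadic composition $\ga$ and unit $\idop$. First I would unpack these low-arity pieces exactly as in the $1$-categorical statement that permutative categories are the $\cO$-algebras in $(\cat,\times)$ (\cref{rmk:peter-invented-o}).

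Next I would extract the permutative $2$-category data. Since $\cO(0) = *$, the map $\mu_0$ names a single object $e \in \cA$; since $\cO(1)$ is trivial, $\mu_1$ is forced to be the identity. The essential piece is $\mu_2$, where $\cO(2)$ is the translation category of $\Sigma_2$: two objects with a unique isomorphism between them. Applying the strict parts of \cref{lem:psfun-out-of-product} to (the $\Sigma_2$-equivariant lift of) $\mu_2$, restriction to the identity object of $\cO(2)$ yields a strict $2$-functor $\oplus \cn \cA \times \cA \to \cA$, the unique isomorphism of $\cO(2)$ yields a $2$-natural isomorphism $\beta$ as in the triangle defining a permutative Gray-monoid, and the $\Sigma_2$-equivariance relates the two objects. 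The decisive point, and the only genuine divergence from \cref{prop:OAlg-gray-is-PGM}, is that here $\oplus$ is a $2$-functor on the \emph{cartesian} product and is therefore fully $2$-functorial; its interchange cell is the identity, which is precisely the extra strictness distinguishing a permutative $2$-category from a permutative Gray-monoid.

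Finally I would match the axioms. The operadic unit laws give the strict unit axioms for $e$; the associativity constraint $\ga$ evaluated on the identity objects gives strict associativity of $\oplus$; and equivariance together with the morphisms of $\cO(2)$ and $\cO(3)$ translates into the two permutativity axioms for $\beta$ (that $\beta$ composed with its transpose is the identity, and the compatibility of $\beta$ with associativity). Conversely, a permutative $2$-category reassembles a unique such family $\{\mu_n\}$, and strict functors of permutative $2$-categories correspond on the nose to maps of algebras, giving the claimed isomorphism of categories. The main obstacle is the bookkeeping in this last step: verifying that the coherence carried by $\cO(3)$ produces exactly the associativity-compatibility axiom for $\beta$ and imposes nothing further, so that the operad axioms and the permutative $2$-category axioms are genuinely in bijection rather than merely implying one another.
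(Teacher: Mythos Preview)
Your approach is correct, and it is exactly the argument the paper has in mind. However, the paper does not actually give a proof of this proposition: it is stated without proof, prefaced only by the remark that it is the analogue of \cref{prop:OAlg-gray-is-PGM} with the cartesian product in place of the Gray tensor product, and the $\cat$-enriched version of May's observation that permutative categories are $\cO$-algebras in $(\cat,\times)$. Your write-up simply makes that analogy explicit, so there is nothing to compare beyond noting that you have filled in what the authors left as an exercise.
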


We now turn to the general question of how symmetric monoidal
structures arise from operad actions on 2-categories.
\begin{defn}
  Let $\mathbb{P}$ be a property of 2-categories.  We write
  $\mathbb{P}(\leq n)$ (including the case $n = \infty$) for the full
  subcategory of the category of operads consisting of those operads
  $P$ for which $P(k)$ has $\mathbb{P}$ for all $k \leq n$.
\end{defn}

\begin{notn}
  Let $\bC$ denote the property of being bicategorically contractible,
  i.e., $X$ has $\bC$ if the unique 2-functor $X \to *$ is a
  biequivalence.
\end{notn}

\begin{lem}\label{lem:contractible-iff}
  A nonempty 2-category $X$ is contractible if and only if the
  following four conditions hold.
  \begin{enumerate}
  \item Any two objects are connected by a 1-cell.
  \item Every 1-cell is an equivalence.
  \item Every 2-cell is invertible.
  \item Any two parallel 1-cells are connected by a unique
    2-isomorphism.
  \end{enumerate}
\end{lem}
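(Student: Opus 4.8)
The plan is to reduce the lemma to the standard characterization of biequivalences: a pseudofunctor $F\cn \cA \to \cB$ is a biequivalence if and only if it is \emph{biessentially surjective} (every object of $\cB$ is equivalent to one in the image of $F$) and a \emph{local equivalence} (for all objects $a,a'$ the functor $F\cn \cA(a,a') \to \cB(Fa,Fa')$ is an equivalence of categories). I would apply this to the unique 2-functor $F\cn X \to *$. Because $*$ has a single object and $X$ is nonempty, biessential surjectivity holds automatically: the unique object of $*$ equals $Fx$ for any object $x$ of $X$. The endomorphism category of that object is $*(*,*) = \mathbf{1}$, the terminal category, so local equivalence says precisely that each hom-category $X(x,x')$ is equivalent to $\mathbf{1}$. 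Thus contractibility of $X$ is equivalent to the statement that $X(x,x') \simeq \mathbf{1}$ for every pair of objects $x,x'$.

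Next I would record the elementary fact that a category $\cC$ is equivalent to $\mathbf{1}$ if and only if it is nonempty and there is exactly one morphism between any two of its objects (in which case $\cC$ is automatically a contractible groupoid). One direction follows from fullness and faithfulness of an equivalence $\cC \simeq \mathbf{1}$; the converse is the observation that the unique functor $\cC \to \mathbf{1}$ is then essentially surjective and fully faithful.

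It then remains to translate ``$X(x,x') \simeq \mathbf{1}$ for all $x,x'$'' into the listed conditions. Nonemptiness of $X(x,x')$ for all pairs is exactly condition (1), the diagonal case $x = x'$ being witnessed by identity 1-cells. Applying the sublemma to the hom-categories, ``exactly one morphism between any two 1-cells $f,g\cn x\to x'$'' unpacks as: every 2-cell is invertible (condition (3)) together with existence and uniqueness of a 2-isomorphism $f \Rightarrow g$ (condition (4)); conversely (3) and (4) together force each hom-category to be a contractible groupoid. This establishes that contractibility is equivalent to (1), (3), and (4). Finally I would observe that condition (2) is a consequence of these: given a 1-cell $f\cn x \to x'$, condition (1) supplies some $g\cn x' \to x$, and condition (4) supplies invertible 2-cells $gf \cong \id_x$ and $fg \cong \id_{x'}$, so that $f$ is an equivalence. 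Hence the four conditions together are equivalent to contractibility.

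The main subtlety to watch is exactly this last point: condition (2) is not a ``vertical'' statement about the hom-categories and so does not fall out of the local-equivalence reformulation directly; it is in fact redundant, being implied by (1), (3), and (4), and I would flag it as such rather than try to read it off the biequivalence criterion. A secondary point requiring care is verifying that the ``unique 2-isomorphism'' clause in (4) combines with (3) to yield uniqueness of \emph{all} connecting 2-cells, not merely of the invertible ones, since that is what the contractible-groupoid condition demands.
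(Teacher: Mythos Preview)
Your argument is correct. The paper states this lemma without proof, treating it as an elementary characterization, so there is no proof in the paper to compare against. Your reduction to the standard biequivalence criterion (biessential surjectivity plus local equivalence) is exactly the right move, and your translation of ``$X(x,x')\simeq\mathbf{1}$ for all $x,x'$'' into conditions (1), (3), (4) is clean. Your observation that condition (2) is redundant---it follows from (1) and (4)---is correct and worth noting; the paper's formulation includes it presumably for emphasis or convenience in later applications. The care you take with the interaction of (3) and (4), ensuring that uniqueness of 2-isomorphisms upgrades to uniqueness of all 2-cells, is also appropriate.
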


\begin{example}
  The operad $\mathcal{O}$ of \cref{defn:operadO} is in
  $\bC(\leq \infty)$.
\end{example}

\begin{defn}
  A \emph{choice of multiplication} $\chi$ in $P$ consists of the
  following:
  \begin{itemize}
  \item a choice of an object $i \in P(0)$;
  \item a choice of an object $t \in P(2)$;
  \item an adjoint equivalence
    $\al\colon\ga(t; t, \idop) \simeq \ga(t; \idop, t)$ in $P(3)$;
  \item adjoint equivalences
    $l\colon \ga(t; i, \idop) \simeq \idop$ and $r\colon \idop \simeq
    \ga(t; \idop, i)$ in $P(1)$;
  \item invertible 2-cells $\pi$ in $P(4)$, $\mu$ in $P(3)$, and
    $\lambda$ and $\rho$ in $P(2)$, as depicted below;
  \item an adjoint equivalence $\be\cn t \simeq t \cdot (12)$ in
    $P(2)$;\item invertible 2-cells $R_{-|--}$ and $R_{--|-}$ in
    $P(3)$, and $v$ in $P(2)$, as depicted below.
  \end{itemize}
  
   \[
  \begin{tikzpicture}[x=25mm,y=20mm]
    \draw[tikzob,mm] 
    (0,0) node (0) {\ga(t;\ga(t;t,\idop),\idop)}
    (0,-.4) node (0') {\ga(\ga(t;t,\idop);t,\idop,\idop)}
    (0,-.2) node[rotate=90] {=}
    (1,1) node (1) {\ga(\ga(t;t,\idop);\idop,t,\idop)}
    (1,.6) node (1') {\ga(t;\ga(t;\idop,t),\idop)}
    (1,.8) node[rotate=90] {=}
    (3,1) node (2) {\ga(\ga(t;\idop,t);\idop,t,\idop)}
    (3,.6) node (2') {\ga(t;\idop,\ga(t;t,\idop))}
    (3,.8) node[rotate=90] {=}
    (4,0) node (3) {\ga(t;\idop,\ga(t;\idop,t))}
    (4,-.4) node (3') {\ga(\ga(t;\idop,t);\idop,\idop,t)}
    (4,-.2) node[rotate=90] {=}
    (2,-1) node (4) {\ga(\ga(t;\idop,t);t,\idop,\idop)}
    (2,-1.4) node (4') {\ga(\ga(t;t,\idop);\idop,\idop,t)}
    (2,-1.2) node[rotate=90] {=};
    \path[mm,auto,->] 
    (0) edge node {\ga(\id;\al,\id)} (1')
    (1) edge node {\ga(\al;\id,\id)} (2)
    (2') edge node {\ga(\id;\id,\al)} (3)
    (0') edge[swap] node {\ga(\al;\id;\id,\id)} (4)
    (4'.15) edge[swap] node {\ga(\al;\id,\id,\id)} (3');
    \draw[tikzob,mm]
    (2,0) node[rotate=270,font=\Large] {\Rightarrow}
    node[right=.3pc] {\pi};
  \end{tikzpicture}
  \]

  \[
  \begin{tikzpicture}[x=55mm,y=20mm]
    \draw[tikzob,mm] 
    (0,0) node (0) {\ga(\ga(t;t,\idop);\idop,i,\idop)}
    (0,-.4) node (0') {\ga(t;\ga(t;\idop,i),\idop)}
    (0,-.2) node[rotate=90] {=}
    (1,0) node (1) {\ga(\ga(t;\idop,t);\idop,i,\idop)}
    (1,-.4) node (1') {\ga(t;\idop,\ga(t;i,\idop))}
    (1,-.2) node[rotate=90] {=}
    (-.5,-1.3) node (2) {\ga(t;\idop,\idop)}
    (1.5,-1.3) node (3) {\ga(t;\idop,\idop)};
    \path[tikzar,mm] 
    (0) edge node {\ga(\al;\id,\id,\id)} (1)
    (2) edge node {\ga(\id;r,\id)} (0')
    (2) edge[swap] node {\id} (3)
    (1') edge node {\ga(\id;\id,l)} (3);
    \draw[tikzob,mm]
    (0.5,-0.65) node[rotate=270,font=\Large] {\Rightarrow}
    node[right=.3pc] {\mu};
  \end{tikzpicture}
  \]
  
  \[
  \begin{tikzpicture}[x=60mm,y=20mm]
    \draw[tikzob,mm] 
    (0,0) node (0) {\ga(t;\ga(t;i,\idop),\idop)}
    (0,-.4) node (0') {\ga(\ga(t;t,\idop);i,\idop,\idop)}
    (0,-.2) node[rotate=90] {=}
    (1,0) node (1) {\ga(t;\idop,\idop)}
    (1,-.4) node (1') {\ga(\idop;t)}
    (1,-.2) node[rotate=90] {=}
    (.5,-1) node (2) {\ga(\ga(t;i,\idop);t)}
    (.5,-1.4) node (2') {\ga(\ga(t;\idop,t);i,\idop,\idop)}
    (.5,-1.2) node[rotate=90] {=};
    \path[tikzar,mm] 
    (0) edge node {\ga(\id;l,\id)} (1)
    (0') edge[swap] node {\ga(\al;\id,\id,\id)} (2'.165)
    (2) edge[swap] node {\ga(l,\id)} (1');
    \draw[tikzob,mm]
    (0.5,-0.5) node[rotate=270,font=\Large] {\Rightarrow}
    node[right=.3pc] {\la};
  \end{tikzpicture}
  \]

  \[
  \begin{tikzpicture}[x=60mm,y=20mm]
    \draw[tikzob,mm] 
    (0,0) node (0) {\ga(t;\idop,\idop)}
    (0,-.4) node (0') {\ga(\idop;t)}
    (0,-.2) node[rotate=90] {=}
    (1,0) node (1) {\ga(t;\idop,\ga(t;\idop,i))}
    (1,-.4) node (1') {\ga(\ga(t;\idop,t);\idop,\idop,i)}
    (1,-.2) node[rotate=90] {=}
    (.5,-1) node (2) {\ga(\ga(t;\idop,i);t)}
    (.5,-1.4) node (2') {\ga(\ga(t;t,\idop);\idop,\idop,i)}
    (.5,-1.2) node[rotate=90] {=};
    \path[tikzar,mm] 
    ;
\path[mm,auto,->] 
    (0) edge node {\ga(\id;\id,r)} (1)
    (0') edge[swap] node {\ga(r,\id)} (2)
    (2'.15) edge[swap] node {\ga(\al;\id,\id,\id)} (1');
    \draw[tikzob,mm]
    (0.5,-0.5) node[rotate=270,font=\Large] {\Rightarrow}
    node[right=.3pc] {\rho};
  \end{tikzpicture}
  \]
  
  \[
  \begin{tikzpicture}[x=25mm,y=20mm]
    \draw[tikzob,mm] 
    (0,-.2) node (0) {\ga(t;t,\idop)}
    (1,1) node (1) {\ga(t;t,\idop)\cdot(12)}
    (1,.6) node (1') {\ga(t;t\cdot(12),\idop)}
    (1,.8) node[rotate=90] {=}
    (3,1) node (2) {\ga(t;\idop,t)\cdot(12)}
    (4,0) node (3) {{\ga(t;\idop,t\cdot(12))\cdot(12)}}
    (4,-.4) node (3') {{\ga(t;\idop,t)\cdot(132)}}
    (4,-.2) node[rotate=90] {=}
    (1,-1.4) node (4) {\ga(t;\idop,t)}
    (3,-1) node (5) {\ga(t;t,\idop)\cdot(132)}
    (3,-1.4) node (5') {\ga(t\cdot(12);\idop,t)}
    (3,-1.2) node[rotate=90] {=};
    \path[tikzar,mm] 
    (0) edge node {\ga(\id;\be,\id)} (1')
    (1) edge node {\al\cdot(12)} (2)
    (2) edge node {\ga(\id;\id,\be)\cdot(12)} (3)
    (0) edge[swap] node {\al} (4)
    (4) edge[swap] node {\ga(\be;\id,\id)} (5')
    (5) edge[swap] node {\al\cdot(132)} (3');
    \draw[tikzob,mm]
    (2,-.2) node[rotate=270,font=\Large] {\Rightarrow}
    node[right=.3pc] {R_{-|--}};
  \end{tikzpicture}
  \]

  \[
  \begin{tikzpicture}[x=25mm,y=20mm]
    \draw[tikzob,mm] 
    (0,-.2) node (0) {\ga(t;\idop,t)}
    (1,1) node (1) {\ga(t;\idop,t)\cdot(23)}
    (1,.6) node (1') {\ga(t;\idop,t\cdot(12))}
    (1,.8) node[rotate=90] {=}
    (3,1) node (2) {\ga(t;t,\idop)\cdot(23)}
    (4,0) node (3) {{\ga(t;t\cdot(12),\idop)\cdot(23)}}
    (4,-.4) node (3') {{\ga(t;t,\idop)\cdot(123)}}
    (4,-.2) node[rotate=90] {=}
    (1,-1.4) node (4) {\ga(t;t,\idop)}
    (3,-1) node (5) {\ga(t;\idop,t)\cdot(123)}
    (3,-1.4) node (5') {\ga(t\cdot(12);t,\idop)}
    (3,-1.2) node[rotate=90] {=};
    \path[tikzar,mm] 
    (0) edge node {\ga(\id;\id,\be)} (1')
    (1) edge node {\al^\bullet\cdot(23)} (2)
    (2) edge node {\ga(\id;\be,\id)\cdot(23)} (3)
    (0) edge[swap] node {\al^\bullet} (4)
    (4) edge[swap] node {\ga(\be;\id,\id)} (5')
    (5) edge[swap] node {\al^\bullet\cdot(123)} (3');
    \draw[tikzob,mm]
    (2,-.2) node[rotate=270,font=\Large] {\Rightarrow}
     node[right=.3pc] {R_{--|-}};
  \end{tikzpicture}
  \]
  
  \[
  \begin{tikzpicture}[x=60mm,y=20mm]
    \draw[tikzob,mm] 
    (0,-1) node (0) {t}
    (1,-1) node (1) {t}
    (.5,0) node (2) {t\cdot(12)};
    \path[tikzar,mm] 
    (0) edge[swap] node {\id} (1)
    (0) edge node {\be} (2)
    (2) edge node {\be\cdot(12)} (1);
    \draw[tikzob,mm]
    (0.5,-0.5) node[rotate=270,font=\Large] {\Rightarrow}
    node[right=.3pc] {v};
  \end{tikzpicture}
  \]
  
  Note that in all of the diagrams above, the equalities on objects
  follow from the axioms of an operad.  
\end{defn}

\begin{rmk}
  Our choice of multiplication is reminiscent of Batanin's notion of a
  system of compositions on a globular operad
  \cite{Bat1998Monoidal}. In both instances, this extra structure on
  an operad is intended to pick out preferred binary operations,
  ensuring they exist as needed. \Cref{prop:C4-choice-of-mult} below
  also reflects the modification made by Leinster \cite{Lei04Higher}
  in which contractibility of the operad already ensures enough
  operations.
\end{rmk} 
  
\begin{example}\label{ex:choiceO}
  Consider the Barratt-Eccles operad $\cO$ of
  \cref{defn:operadO}. There is a canonical choice of multiplication
  $\can$ on $\cO$ given by $i=\ast \in \cO(0)$ and $t$ equal to the
  identity permutation in $\Si_2$. All the equivalences are given by
  the unique 1-morphisms between the corresponding objects, and all
  the 2-cells are the identity (noting that the boundaries are equal
  because there is a unique 1-morphism between any two objects).
\end{example}
 
The idea behind this example can be generalized to a larger class of
operads, and we explain this now.

\begin{prop}\label{prop:C4-choice-of-mult}
  Let $P$ be an operad in $\bC(\leq 4)$. Then there exists a choice of
  multiplication on $P$.
\end{prop}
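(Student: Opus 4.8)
The plan is to build each piece of the data for a choice of multiplication directly, exploiting the contractibility of $P(k)$ for $k \le 4$ through the characterization in \cref{lem:contractible-iff}. The key observation is that the notion of choice of multiplication carries \emph{no axioms}: it asks only for cells of the indicated types, and the source and target \emph{objects} of every cell already agree by the operad axioms (as noted at the end of the definition). Hence it suffices to exhibit each cell, one at a time, in the appropriate 2-category $P(k)$. To begin, each $P(k)$ with $k \le 4$ is contractible and therefore nonempty, so I may choose an object $i \in P(0)$ and an object $t \in P(2)$.

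Next I would produce the adjoint equivalences $\al$, $l$, $r$, and $\be$, whose prescribed source and target objects lie in $P(3)$, $P(1)$, $P(1)$, and $P(2)$ respectively. In each case the ambient 2-category is contractible, so conditions (1) and (2) of \cref{lem:contractible-iff} guarantee that any two of its objects are joined by a 1-cell and that every such 1-cell is an equivalence. Choosing such a 1-cell and promoting it to an adjoint equivalence---always possible for an equivalence in a bicategory---supplies $\al$, $l$, $r$, and $\be$.

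Finally I would fill in the invertible 2-cells $\pi$, $\mu$, $\la$, $\rho$, $R_{-|--}$, $R_{--|-}$, and $v$, which live in $P(4)$, $P(3)$, $P(2)$, $P(2)$, $P(3)$, $P(3)$, and $P(2)$---all with index at most $4$. In each of the displayed diagrams the two bounding composites of 1-cells share a common source object and a common target object, so they constitute a pair of parallel 1-cells in a contractible 2-category; conditions (3) and (4) of \cref{lem:contractible-iff} then furnish a unique 2-isomorphism between them, which is in particular invertible, giving the required 2-cell. The only points demanding attention are the bookkeeping that these boundaries are genuinely parallel---which is exactly what the operad axioms ensure---and the routine promotion of equivalences to adjoint equivalences. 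I do not expect a real obstacle here: contractibility produces every cell we need, and since a choice of multiplication is pure data with no coherence conditions, there is nothing further to verify once the cells have been chosen.
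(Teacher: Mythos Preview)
Your proof is correct and follows essentially the same approach as the paper: choose $i$ and $t$ using nonemptiness of $P(0)$ and $P(2)$, then invoke \cref{lem:contractible-iff} to obtain all the required equivalences and invertible 2-cells from contractibility of the $P(k)$ with $k\le 4$. Your write-up is in fact more explicit than the paper's, spelling out which conditions of the lemma are used at each stage and the promotion of equivalences to adjoint equivalences.
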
 

\begin{proof}\proofof{prop:C4-choice-of-mult}
  Since $P(0)$ and $P(2)$ are contractible, they are in particular
  non-empty, and hence we can pick objects $i\in P(0)$ and
  $t\in P(2)$.  By \cref{lem:contractible-iff} the equivalences $\al$,
  $l$, $r$ and $\be$ and the 2-isomorphisms $\pi$, $\mu$, $\la$,
  $\rho$, $R_{-|--}$, $R_{--|-}$ and $v$ can be picked using
  contractibility. \end{proof}
 
\begin{prop}\label{prop:operad-map-induces-mult-choice}
  Let $f \cn P \to Q$ be a map of operads in $(\iicatps,\times)$.  If
  $P$ has a choice of multiplication $\chi$, then taking the images of
  all data involved gives a choice of multiplication $f(\chi)$ in
  $Q$.
\end{prop}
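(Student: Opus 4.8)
The plan is to build $f(\chi)$ by transporting each item of $\chi$ along the appropriate component pseudofunctor $f_n \cn P(n) \to Q(n)$, and then to confirm that the result meets the requirements of the definition of a choice of multiplication. The first thing to note is that a choice of multiplication carries \emph{no} coherence axioms: it is simply a list of objects, adjoint equivalences, and invertible $2$-cells with specified boundaries. So there is nothing to check beyond producing cells of the right dimension with the right source and target, verifying that the designated $1$-cells are adjoint equivalences, and verifying that the designated $2$-cells are invertible. Accordingly I set $f(i) = f_0(i)$ and $f(t) = f_2(t)$; I let $f(\al), f(l), f(r), f(\be)$ be the images of $\al, l, r, \be$ under $f_3, f_1, f_1, f_2$ respectively; and I let $f(\pi), f(\mu), f(\la), f(\rho), f(R_{-|--}), f(R_{--|-}), f(v)$ be suitable corrections (see below) of the images of the corresponding $2$-cells of $\chi$.

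The boundaries of these images are governed by the operad-map structure of $f$. Since $f$ is a map of operads in $(\iicatps,\times)$ it commutes strictly with the structure maps $\ga$ and preserves the distinguished element, so that $f_1(\idop_P) = \idop_Q$ and $f_m \circ \ga_P = \ga_Q \circ (f \times \cdots \times f)$ as pseudofunctors; moreover each $f_n$ is $\Si_n$-equivariant. Consequently every operadic composite occurring as a source or target in the diagrams—for instance $\ga(t; t, \idop)$, $\ga(t; i, \idop)$, and likewise the twisted composites such as $t \cdot (12)$ appearing in $\be$, $R_{-|--}$, $R_{--|-}$, and $v$—is carried by $f$ exactly to the analogous expression in $f(t), f(i), \idop_Q$ in $Q$. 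In particular the underlying objects match on the nose, and the object equalities displayed in the diagrams hold in $Q$ for precisely the reason they held in $P$, namely the operad axioms.

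The one place genuine care is needed—and the main, though entirely routine, obstacle—is that the source and target of each $2$-cell of $\chi$ are pasting composites of the chosen structure $1$-cells formed using composition \emph{inside} a single $P(n)$, whereas a pseudofunctor preserves composition of $1$-cells only up to its invertible compositor and unitor constraints. Thus $f_n$ applied to, say, $\pi$ has as its boundary $f_n$ of a composite rather than the composite of the $f_n$-images. To obtain a $2$-cell with exactly the boundary demanded by the definition, I paste $f_n(\pi)$ with the relevant coherence constraints of $f_n$, which are canonical by the coherence theorem for pseudofunctors \cite{JS1993btc,Gurski13Coherence}. Because these constraints are invertible and each $f_n$ is locally a functor, hence sends invertible $2$-cells to invertible $2$-cells, every corrected $2$-cell is again invertible; and because pseudofunctors send adjoint equivalences to adjoint equivalences, each of $f(\al), f(l), f(r), f(\be)$ is an adjoint equivalence with the correct source and target. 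This supplies all the data of a choice of multiplication $f(\chi)$ in $Q$, with the boundary-correction bookkeeping being the only nontrivial ingredient.
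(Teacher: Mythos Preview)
Your argument is correct. The paper states this proposition without proof, treating it as immediate from the definitions; your write-up spells out exactly what ``taking the images of all data involved'' means and why it works. In particular, your observation that a choice of multiplication carries no axioms---so the only task is to produce cells of the right type with the right boundaries---is the key point, and your handling of the boundary-correction for the $2$-cells via the coherence constraints of the pseudofunctors $f_n$ is the one place where care is genuinely needed in the general $(\iicatps,\times)$ setting. The paper's terse statement elides this last point, presumably because in every application (\cref{cor:operads-WNO-WSBO,defn:wt-can-choice,WSDinfty-mult-choice}) the relevant operad maps have underlying $2$-functors or normal pseudofunctors, for which the issue largely evaporates.
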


\begin{thm}\label{thm:choice-of-mult-to-SMB}
  Let $P$ be an operad in $\bC(\leq 5)$.  Then a choice of
  multiplication $\chi$ in $P$ determines a functor 
  \[
  \chi^* \cn \PAlg(\iicatps,\times) \to \SMIICat_{ps}
  \]
  which is the identity on underlying 2-categories and
  pseudofunctors.  We therefore have a functor
  \[
  \chi^* \cn P\mh\IICat \to \SMIICat_{s}.
  \]
\end{thm}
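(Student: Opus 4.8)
The plan is to transport every component of the chosen multiplication $\chi$, along the action pseudofunctors, into symmetric-monoidal data on each $P$-algebra, and then to verify all the axioms by a single uniqueness-of-$2$-cells argument in the contractible operad. For a $P$-algebra $(X,\mu)$ I would first extract, via \cref{lem:psfun-out-of-product}, the partial pseudofunctors $\mu_n(c,-)\cn X^{n}\to X$. The object $t\in P(2)$ then defines the tensor product $\otimes := \mu_2(t,-)$, and $i\in P(0)$ defines the unit $e:=\mu_0(i)$. Using the compatibility of the action with operadic composition $\ga$, the trifunctors $(x,y,z)\mapsto (xy)z$ and $(x,y,z)\mapsto x(yz)$ are identified with $\mu_3(\ga(t;t,\idop),-)$ and $\mu_3(\ga(t;\idop,t),-)$, so the $1$-cell $\al$ in $P(3)$ induces, again by \cref{lem:psfun-out-of-product}, the associativity equivalence $\mu_3(\al,-)$ as a pseudonatural equivalence. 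In the same fashion $l,r$ and $\be$ yield the unit and braid equivalences, while the invertible $2$-cells $\pi,\mu,\la,\rho,R_{-|--},R_{--|-},v$ (living in $P(k)$ for $k\le 4$) induce the required invertible modifications, invertibility being preserved by the last clause of \cref{lem:psfun-out-of-product}. Since $X$ is a $2$-category, this equips it with the structure of a symmetric monoidal $2$-category whose underlying $2$-category is unchanged.

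The heart of the argument is the verification of the three monoidal, four braided, two sylleptic, and one symmetric axioms. Each is an equality of two pasting composites built from the transported cells. Applying the $\ga$-compatibility of $\mu$ together with its pseudofunctoriality, I would rewrite each side of a fixed axiom as $\mu_k(\Theta,-)$ for a single $2$-cell $\Theta$ of $P(k)$, where $k$ is the number of tensor factors occurring in that axiom. The largest value of $k$ is $5$, achieved by the non-abelian cocycle axiom for the pentagonator $\pi$ among five objects; this is precisely why the hypothesis is $P\in\bC(\le 5)$. The two sides of each axiom are then $\mu_k$ applied to two parallel $2$-cells of $P(k)$ with the same source and target $1$-cells, so the bicategorical contractibility of $P(k)$ and \cref{lem:contractible-iff}(4) force those operadic $2$-cells to coincide; applying the pseudofunctor $\mu_k(-,-)$, which respects equalities of $2$-cells, yields the axiom in $X$. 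I expect the main obstacle to be exactly the bookkeeping of this reduction: verifying that every region of every axiom collapses to a single operadic $2$-cell, so that the pseudofunctoriality constraints of the action and the whiskerings by $\otimes$ are absorbed correctly by $\ga$-compatibility, and that no axiom forces an arity above $5$. Once this is arranged, contractibility does the real work and the difficulty is organizational rather than conceptual.

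Finally I would address functoriality and the strict statement. A morphism of $P$-algebras $f\cn (X,\mu)\to (X',\mu')$ satisfies $f\circ\mu_n(c,-)=\mu'_n(c,-)\circ f^{\times n}$ for all $c$, so $f$ sends $\otimes$ to $\otimes'$ and $e$ to $e'$ strictly and commutes on the nose with every transported structure cell. Hence $f$, with identity or forced-coherence constraints, is a strictly symmetric monoidal pseudofunctor in the sense of \cref{defn:strict-monoidal}, and $\chi^*$ is literally the identity on underlying $2$-categories and pseudofunctors; compatibility with composition is then immediate from the ``unique coherence'' convention defining composition in $\SMIICat_{ps}$. Restricting to $P\mh\IICat$, where each morphism is a $2$-functor, the image is a strict functor, giving the promised functor $\chi^*\cn P\mh\IICat\to\SMIICat_{s}$.
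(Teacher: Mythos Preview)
Your proposal is correct and follows essentially the same strategy as the paper: transport the data of $\chi$ along the action pseudofunctors via \cref{lem:psfun-out-of-product}, then verify the axioms by observing that each reduces to an equality of parallel $2$-cells in some $P(k)$ with $k\le 5$, which holds by contractibility. Your treatment is in fact more detailed than the paper's, which simply asserts that contractibility of $P(n)$ for $n\le 5$ forces the analogues of the axioms to hold in $P$ and hence in $X$ after applying $\mu$; the functoriality argument is likewise identical.
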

\begin{proof}\proofof{thm:choice-of-mult-to-SMB}
  First, observe that the second statement is a refinement of the
  first because $P\mh\IICat$ denotes the subcategory of
  $\PAlg(\iicatps,\times)$ whose morphisms are 2-functors (see
  \cref{notn:P-2cat}).  To prove the first statement, we use $\chi$ to
  construct a symmetric monoidal structure on an arbitrary $P$-algebra
  $(X,\mu)$.  

  We define the monoidal product as the
  pseudofunctor $\mu_2(t;-,-)$ obtained by applying
  \cref{lem:psfun-out-of-product} to the pseudofunctor
  $\mu_2\cn P(2) \times X^2 \to X$. Explicitly, the monoidal product
  on objects is defined as $xy=\mu_2(t;x,y)$. The unit object is
  defined as $e=\mu_0(i)$.
 
  The equivalences $\al$, $l$, $r$ and $\be$ are defined as the
  appropriate images of their namesakes in $\chi$, as given by
  \cref{lem:psfun-out-of-product}. For example,
  $\al\cn (xy)z \to x(yz)$ is given by the pseudonatural equivalence
  $\mu_3(\al;-,-,-)$. Similarly, the invertible modifications $\pi$,
  $\mu$, $\la$, $\rho$, $R_{-|--}$, $R_{--|-}$ and $v$ are given by
  the appropriate images of their namesakes in $\chi$. For example,
  $v$ is defined as $\mu_2(v;-,-)$.
 
  Contractibility of $P(n)$ for $n\leq 5$ implies that analogues of
  the axioms for the modifications in a symmetric monoidal bicategory
  are satisfied by the corresponding cells in $P$, which in turn
  implies that the same axioms are satisfied after applying $\mu$.
 
  Let $F\cn X \to Y$ be a $P$-algebra morphism, i.e., a pseudofunctor
  that commutes with the action of $P$. It is easy to check that this
  implies that $F$ preserves strictly the symmetric monoidal
  structures on $X$ and $Y$ given by the choice of multiplication
  $\chi$.  Thus we have a functor
  $\chi^*\cn \PAlg(\iicatps,\times) \to \SMIICat_{ps}$.
\end{proof}

We now record several results which follow from
\cref{thm:choice-of-mult-to-SMB} and its proof.
\begin{prop}\label{prop:chistar-fstar-equals-chifstar}
  Let $f \cn P \to Q$ in $\bC(\leq 5)$, and assume $P$ has choice of
  multiplication $\chi$. As functors
  $Q\mh\IICat \to \SMIICat_{s}$, we have an
  equality $\chi^* \circ f^* = (f(\chi))^*$.
\end{prop}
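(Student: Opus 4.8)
The plan is to unwind both functors on an arbitrary object and check that they produce \emph{identical} symmetric monoidal structures, the only substantive input being the compatibility of the construction in \cref{lem:psfun-out-of-product} with precomposition. First I would observe that every functor in sight is the identity on underlying 2-categories and pseudofunctors: this is part of \cref{thm:choice-of-mult-to-SMB} for $\chi^*$ and $(f(\chi))^*$, while $f^*$ alters only the algebra action and leaves the underlying 2-category and morphisms untouched. Hence $\chi^*\circ f^*$ and $(f(\chi))^*$ automatically agree on underlying data, and it suffices to compare the symmetric monoidal structures that each assigns to a fixed $Q$-algebra.

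So fix a $Q$-algebra $(X,\mu)$ and write $f^*(X,\mu) = (X,\nu)$, so that $\nu_k = \mu_k \circ (f_k \times \id)$ for every $k$. Following the proof of \cref{thm:choice-of-mult-to-SMB}, each cell of the symmetric monoidal structure on $\chi^*(X,\nu)$ is produced by applying \cref{lem:psfun-out-of-product} to the action pseudofunctor $\nu_k$ at the corresponding datum of $\chi$ living in $P(k)$; for example the tensor product is $\nu_2(t;-,-)$, the unit object is $\nu_0(i)$, and the associativity equivalence is $\nu_3(\al;-,-,-)$, with the remaining equivalences and invertible modifications of $\chi$ transported in the same manner.

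The heart of the argument is the identity
\[
  \nu_k(c;-,\dots,-) = \mu_k(f_k(c);-,\dots,-)
\]
for each datum $c$ of $\chi$ in $P(k)$. This holds because restricting the composite $\mu_k\circ(f_k\times\id)$ to a cell $c$ of $P(k)$ in the first variable agrees with first transporting $c$ to $f_k(c)\in Q(k)$ and then restricting $\mu_k$ to $f_k(c)$; concretely, the operation of \cref{lem:psfun-out-of-product} commutes with precomposition by $f_k\times\id$ on objects, 1-cells, and 2-cells. By \cref{prop:operad-map-induces-mult-choice}, the choice of multiplication $f(\chi)$ is defined to have precisely the images $f_k(c)$ as its data, so the right-hand side of the display is by definition the corresponding structure cell of $(f(\chi))^*(X,\mu)$. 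Matching the data cell-by-cell then shows that $\chi^*(f^*(X,\mu))$ and $(f(\chi))^*(X,\mu)$ carry the very same symmetric monoidal structure.

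It remains to treat morphisms, but this is immediate: both composites send a morphism of $Q\mh\IICat$ to its underlying 2-functor equipped with the canonically determined strictly symmetric monoidal constraints, and the structures on source and target have just been shown to coincide. I expect the only point requiring care to be the displayed compatibility---verifying on each type of cell that \cref{lem:psfun-out-of-product} genuinely commutes with precomposition by $f_k\times\id$---but this is routine bookkeeping rather than a real obstacle.
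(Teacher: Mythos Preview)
Your argument is correct and is precisely the verification the paper has in mind: the paper does not give a separate proof but states this proposition as one of ``several results which follow from \cref{thm:choice-of-mult-to-SMB} and its proof,'' and you have spelled out exactly that unwinding. The key identity $\nu_k(c;-,\dots,-)=\mu_k(f_k(c);-,\dots,-)$ together with the definition of $f(\chi)$ from \cref{prop:operad-map-induces-mult-choice} is the whole content.
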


\begin{prop}\label{prop:choice-from-product}
  Suppose that $P_1$ and $P_2$ are operads with choices of
  multiplication $\chi_1$ and $\chi_2$.  Then the product $P = P_1
  \times P_2$ has a choice of multiplication defined by the pointwise
  product of the data, $\chi = \chi_1 \times \chi_2$.
\end{prop}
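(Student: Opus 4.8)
The plan is to define $\chi = \chi_1 \times \chi_2$ by taking every piece of data componentwise and then to check only three things: that each piece has the correct source and target object, that it is an adjoint equivalence wherever the definition of a choice of multiplication demands one, and that it is invertible wherever a 2-cell is demanded. Since the definition of a choice of multiplication lists data but imposes no coherence axioms on them, these are the only points to verify.

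First I would record the structure of the product operad. By construction $P(n) = P_1(n) \times P_2(n)$, and the operadic composition, unit, and symmetric group actions are all computed componentwise: $\ga^P = \ga^{P_1} \times \ga^{P_2}$, $\idop^P = (\idop^{P_1}, \idop^{P_2})$, and a permutation $\sigma$ acts by $(\sigma,\sigma)$. Consequently every operadic composite appearing as a source or target in the definition decomposes as a product. For instance, writing $t = (t_1,t_2)$ and $\idop = (\idop^{P_1},\idop^{P_2})$,
\[
  \ga^P(t; t, \idop) = \bigl(\ga^{P_1}(t_1; t_1, \idop^{P_1}),\; \ga^{P_2}(t_2; t_2, \idop^{P_2})\bigr),
\]
and identically for $\ga^P(t;\idop,t)$, for the twisted composites such as $t\cdot(12)$, and for all the remaining boundaries occurring in $\al$, $l$, $r$, $\be$, $\pi$, $\mu$, $\la$, $\rho$, $R_{-|--}$, $R_{--|-}$, and $v$.

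Next I would set $i = (i_1,i_2) \in P_1(0)\times P_2(0) = P(0)$ and $t = (t_1,t_2)\in P(2)$, and take each adjoint equivalence and invertible 2-cell of $\chi$ to be the product of its two namesakes, e.g.\ $\al = \al_1\times\al_2$ in $P(3)$, $\be = \be_1\times\be_2$ in $P(2)$, $\pi = \pi_1\times\pi_2$ in $P(4)$, and so on. By the decomposition above, the source and target of $\al_1\times\al_2$ are exactly $\ga^P(t;t,\idop)$ and $\ga^P(t;\idop,t)$ as required, and similarly for every other cell. Because equivalences, adjoint equivalences, and invertible 2-cells in a product 2-category $P_1(n)\times P_2(n)$ are all characterized componentwise, the product of two adjoint equivalences is again an adjoint equivalence and the product of two invertible 2-cells is again invertible with the expected boundary.

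The verification is entirely routine; the only step requiring any care is the bookkeeping in the first paragraph, namely confirming that the operadic composites that define the boundaries in $P$ really are the products of the corresponding composites in $P_1$ and $P_2$. Once that is in hand, assembling the componentwise data exhibits $\chi_1\times\chi_2$ as a choice of multiplication on $P_1\times P_2$.
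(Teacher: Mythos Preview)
Your proposal is correct and is exactly the natural componentwise verification one expects here. The paper itself does not give a proof of this proposition: it is simply recorded as one of several results that ``follow from \cref{thm:choice-of-mult-to-SMB} and its proof,'' so your write-up is in fact more detailed than what the paper provides, while following the same implicit approach.
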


\begin{rmk}\label{rmk:product-choice-projections}
  As a special case of \cref{prop:chistar-fstar-equals-chifstar}, the
  projections $\pi_i\cn P \to P_i$ identify $\pi_i(\chi)$ with
  $\chi_i$.
\end{rmk}

\begin{prop}
  If $P$ is an operad in $\bC(\leq 5)$, $X$ is a $P$-algebra, and
  $\chi_1, \chi_2$ are two different choices of multiplication in $P$,
  the identity pseudofunctor on $X$ can be equipped with the structure of a
  symmetric monoidal biequivalence $\chi_1^\ast X \to \chi_2^\ast X$
  relating the two symmetric monoidal structures. Moreover, this
  assignment is natural in $X$.
\end{prop}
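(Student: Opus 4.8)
The plan is to construct the comparison directly from the contractibility of $P$, following the same template as the proof of \cref{thm:choice-of-mult-to-SMB}. Write $e_j = \mu_0(i_j)$ for the unit and $x \otimes_j y = \mu_2(t_j; x, y)$ for the tensor product arising from $\chi_j$, for $j = 1, 2$. First I would produce the structure 1-cells of the would-be symmetric monoidal biequivalence. Since $P(0)$ and $P(2)$ are contractible, \cref{lem:contractible-iff} supplies equivalence 1-cells $i_2 \to i_1$ in $P(0)$ and $t_2 \to t_1$ in $P(2)$. Applying $\mu_0$ to the former gives the unit constraint $e_2 \simeq e_1$, and applying \cref{lem:psfun-out-of-product} to the pseudofunctor $\mu_2$ and the latter 1-cell gives a pseudonatural equivalence $\mu_2(t_2; -, -) \Rightarrow \mu_2(t_1; -, -)$, i.e.\ the tensor constraint $x \otimes_2 y \simeq x \otimes_1 y$; both are equivalences because pseudofunctors preserve equivalences.

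Next I would supply the coherence data of a symmetric monoidal pseudofunctor (\cref{defn:symm-mon-psfun}): the three invertible modifications relating the unit and tensor constraints to the associativity and unit equivalences of $\chi_1$ and $\chi_2$, together with the invertible modification comparing the two braidings. Each of these has a prescribed boundary built from images under $\mu$ of cells of $\chi_1$, $\chi_2$ and the connecting 1-cells above; the comparison for associativity lives over $P(3)$, those for the units over $P(1)$ and $P(2)$, and the braiding comparison over $P(2)$. In each case condition (4) of \cref{lem:contractible-iff} provides a unique invertible 2-cell in the relevant $P(k)$ with that boundary, and I take its image under the appropriate $\mu_k(-; -, \dots, -)$, again via \cref{lem:psfun-out-of-product}.

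It then remains to verify the two monoidal, two braided, and one symmetric axioms of \cref{defn:symm-mon-psfun}. Exactly as in the proof of \cref{thm:choice-of-mult-to-SMB}, each axiom is an equality of modifications whose two sides are images under some $\mu_k$ of a pair of parallel 2-cells in $P(k)$ with $k \le 5$; contractibility of $P(k)$ forces those 2-cells to coincide, so every axiom holds. Since the underlying pseudofunctor is $\id_X$, which is a biequivalence, and the unit and tensor constraints are equivalences, the resulting symmetric monoidal pseudofunctor is a symmetric monoidal biequivalence $\chi_1^\ast X \to \chi_2^\ast X$. Well-definedness up to a unique symmetric monoidal equivalence follows because any two choices of connecting 1-cells in $P(0)$ and $P(2)$ are uniquely $2$-isomorphic.

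Finally, for naturality in $X$: a morphism $g \cn X \to X'$ of $P$-algebras is, by \cref{thm:choice-of-mult-to-SMB}, strictly symmetric monoidal for both $\chi_1^\ast$ and $\chi_2^\ast$, and it commutes strictly with the operad action. Since every piece of the comparison biequivalence at $X$ is the image under the action of fixed cells of $P$, and $g$ intertwines the actions on $X$ and $X'$, the relevant naturality squares commute; hence the comparison biequivalences assemble into a natural transformation $\chi_1^\ast \Rightarrow \chi_2^\ast$. The main obstacle is not any single computation but the bookkeeping: identifying the correct arity $P(k)$ from which each coherence modification and each axiom is drawn, and checking that the strict compatibility of $g$ with the action propagates to all of this data. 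As with \cref{thm:choice-of-mult-to-SMB}, once the arities are pinned down the contractibility hypothesis discharges every equation automatically.
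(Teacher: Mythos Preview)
Your proposal is correct and follows essentially the same approach as the paper: the paper's proof is a two-sentence sketch that uses contractibility to obtain connecting 1-cells $i_1 \simeq i_2$ and $t_1 \simeq t_2$, transports them via the action to get the unit and tensor constraints, and then appeals to contractibility for all remaining data and axioms. You have simply unpacked this sketch in detail, following the template of \cref{thm:choice-of-mult-to-SMB} as intended.
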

\begin{proof}
  Note that contractibility
  of $P$ gives 1-morphisms relating $i_1$ with $i_2$ and $t_1$ with
  $t_2$, which when applied to the algebra $X$ give rise to the map
  that compares units and multiplications. The rest of the data and
  axioms of a symmetric monoidal biequivalence follow from
  contractibility as well.
\end{proof}

Recall that $\can$ denotes the canonical choice of multiplication for
the Barratt-Eccles operad (\cref{ex:choiceO}), and
we implicitly regard a permutative Gray-monoid as an
$\cO$-algebra by \cref{prop:PGM-is-OAlg}.

\begin{prop}\label{prop:PGM-as-O-algebra}
  Let $\cA$ be a permutative Gray-monoid. Then
  $\cA=\can^* \cA$. The composite
  \[
  \PGM \to \cO\mh\mb{Alg}(\iicatps, \times) \fto{\can^*} \SMIICat_{s}
  \]
  is
  the inclusion functor from $\PGM$ to $\SMIICat_{s}$.
  \end{prop}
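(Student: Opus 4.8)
The plan is to check, cell by cell, that the symmetric monoidal 2-category $\can^*\cA$ produced by \cref{thm:choice-of-mult-to-SMB} has exactly the structure carried by the permutative Gray-monoid $\cA$ under the inclusion $\PGM \hookrightarrow \SMIICat_{s}$. First I would record, via \cref{prop:OAlg-gray-is-PGM} and \cref{cor:operads-2cat-variants}, the explicit form of the $\cO$-algebra structure maps $\mu_n \cn \cO(n)\times \cA^n \to \cA$ attached to $\cA$. The key facts are that, for $t$ the identity permutation, $\mu_2(t;-,-) = \oplus$, that $\mu_0(i) = e$, and that the morphism $\be_{\can}\cn t \simeq t\cdot(12)$ in $\cO(2)$ encodes the $2$-natural isomorphism $\beta$ of \cref{defn:pgm}. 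On objects the first identity is immediate from the correspondence of \cref{prop:OAlg-gray-is-PGM}; on $1$- and $2$-cells it holds because the inclusion of \cref{cor:operads-2cat-variants} precomposes with the cubical functor $c\cn \cA\times\cA \to \cA\otimes\cA$, which is precisely how a Gray-monoid is regarded as a monoidal bicategory (the remark following \cref{defn:gray-monoid}).

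Next I would run through the construction in the proof of \cref{thm:choice-of-mult-to-SMB} using the canonical data $\can$ of \cref{ex:choiceO}. The monoidal product is $\mu_2(t;-,-)=\oplus$ and the unit is $\mu_0(i)=e$, matching $\cA$. For the associativity and unit equivalences $\al, l, r$, the source and target objects of their namesakes in $\cO(3)$ and $\cO(1)$ coincide by the operad axioms (the equalities marked in the displayed diagrams of the choice of multiplication); since $\cO(n)$ has a unique $1$-morphism between any two objects, these canonical choices are identity $1$-cells, and applying $\mu$ via \cref{lem:psfun-out-of-product} yields identity equivalences. Likewise $\pi,\mu,\la,\rho, R_{-|--}, R_{--|-}$, and $v$ are identity $2$-cells in $\can$, so their images under $\mu$ are identity modifications. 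The only nonidentity datum is the braiding $\be = \mu_2(\be_{\can};-,-)$, which by the previous paragraph is exactly $\beta$.

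I would then match these against the structure carried by $\cA$ as an object of $\SMIICat_{s}$. Because $\cA$ is a Gray-monoid, its associativity and unit constraints together with $\pi,\mu,\la,\rho$ are identities; moreover the first axiom of \cref{defn:pgm} forces $\be\cdot\be=\id$ on the nose and the second governs the braided coherence, so that in the resulting semistrict symmetric monoidal bicategory structure $R_{-|--}, R_{--|-}$, and $v$ are identities and $\beta$ is the sole nontrivial cell. Thus every structure cell agrees on the nose and $\cA = \can^*\cA$. Finally, on morphisms a strict functor of permutative Gray-monoids is exactly an $\cO$-algebra map whose underlying pseudofunctor is a $2$-functor; since $\can^*$ is the identity on underlying pseudofunctors and sends such a map to the strict symmetric monoidal functor preserving the (now identical) structure strictly, the composite is the inclusion $\PGM \hookrightarrow \SMIICat_{s}$.

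The main obstacle is the bookkeeping behind the identifications $\mu_2(t;-,-)=\oplus$ and $\mu_2(\be_{\can};-,-)=\beta$ at the level of $1$- and $2$-cells, not merely on objects. This requires unwinding the correspondence of \cref{prop:OAlg-gray-is-PGM} through the cubical functor $c$ and the lax monoidal inclusion of \cref{rem:id-is-lax-monoidal}, and verifying that the pseudofunctoriality and pseudonaturality constraints produced by $\can^*$ coincide with the $\Si_{f,g}$ data of the Gray tensor product. Everything else reduces to the observation that $\can$ consists entirely of identities precisely on the cells that a permutative Gray-monoid also renders trivial.
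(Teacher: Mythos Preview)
Your proposal is correct and follows exactly the route the paper takes: the paper's proof consists solely of the assertion that the underlying 2-categories coincide and that ``it is clear by construction that the two symmetric monoidal structures are the same,'' and what you have written is a careful unwinding of precisely that claim. Your detailed verification---that the canonical choice $\can$ has identity associators, unitors, and coherence modifications because the relevant source and target objects in $\cO(n)$ already coincide, leaving only $\beta$ nontrivial---is the content the paper leaves implicit.
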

\begin{proof}\proofof{prop:PGM-as-O-algebra}\marginref{prop:PGM-is-OAlg,thm:choice-of-mult-to-SMB}
  The underlying 2-categories of $\cA$ and $\can^* \cA$
  are equal. It is clear by construction that the two symmetric
  monoidal structures are the same.
\end{proof}

\section{Symmetric monoidal structures and group-completion}
\label{sec:gp-cpn}

In this section we show how to construct a Picard 2-category with the
same stable 2-type as a given permutative Gray-monoid.  Our
construction begins with a strict version of the fundamental
2-groupoid in \cref{sec:W}.  In \cref{sec:K-thy} we analyze its effect
on stable equivalences, and in \cref{sec:top-gp-cpn} we apply the
theory of \cref{sec:operads} together with topological
group-completion to obtain the desired Picard 2-category.

\begin{defn}\label{defn:striigpd}
  A \emph{strict 2-groupoid} is a 2-category in which every 1- and
  2-cell is strictly invertible, i.e., for every 1-cell
  $f \cn a \to b$ there is a 1-cell $g \cn b \to a $ such that $gf$,
  $fg$ are both identity 1-cells, and similarly for 2-cells. We define
  the category $\siigpd$ to have strict 2-groupoids as objects and
  2-functors as morphisms.
\end{defn}

\begin{rmk}
  We should note that any 2-category isomorphic to a strict 2-groupoid
  is itself a strict 2-groupoid, but that a 2-category which is
  biequivalent to a strict 2-groupoid will not in general have 1-cells
  which are strictly invertible in the sense above, but only satisfy
  the weaker condition that we have called invertible in
  \cref{defn:invertible2}: given $f$ there exists a $g$ such that $fg$
  and $gf$ are isomorphic to identity 1-cells.
\end{rmk}

\subsection{Background on the Whitehead 2-groupoid and nerves}
\label{sec:W}

We now recall the construction of a strict fundamental 2-groupoid for
simplicial sets due to Moerdijk-Svensson \cite{MS93Algebraic}, known
as the Whitehead 2-groupoid.
\begin{defn}[{\cite[Section 1, Example (2)]{MS93Algebraic}}]\label{defn:whitehead}
  Let $X$ be a topological space, $Y \subseteq X$ a subspace, and
  $S \subseteq Y $ a subset.  We define the \emph{Whitehead
    2-groupoid} $W(X,Y,S)$ to be the strict 2-groupoid with
  \begin{itemize}
  \item objects the set $S$,
  \item 1-cells $[f] \cn a \to b$ to be homotopy classes of paths $f$
    from $a$ to $b$ in $Y$, relative the endpoints, and
  \item 2-cells $[\al] \cn [f] \Rightarrow [g]$ to be homotopy classes
    of maps $\al \cn I \times I \to X$ such that
    \begin{enumerate}
    \item $\al(t,0) = f(t)$,
    \item $\al(t,1) = g(t)$,
    \item $\al(0,-)$ is constant at the source of $f$ (and hence also
      $g$),
    \item $\al(1,-)$ is constant at the target of $f$ (and hence also
      $g$), and
    \item homotopies $H(s,t,-)$ between two such maps fix the vertical
      sides and map the horizontal sides into $Y$ for each $s$.
    \end{enumerate}
  \end{itemize}
\end{defn}

Since the nerve functor $N \cn \siigpd \to \sset$ preserves limits and
filtered colimits, it has a left adjoint. In \cite{MS93Algebraic},
this left adjoint was explicitly computed using Whitehead 2-groupoids,
and we recall their construction now.

\begin{notn}\label{notn:skeleton}
  For a simplicial set $X$, let $X^{(n)}$ denote the $n$-skeleton of
  $X$.
\end{notn}

\begin{thm}[{\cite[Theorem 2.3]{MS93Algebraic}}]\label{thm:W}
  The functor $W \cn \sset \to \siigpd$, defined by
  \[
    W(X) = W(|X|, |X^{(1)}|, |X^{(0)}|),
  \]
  is left adjoint to the nerve functor, $N$.
\end{thm}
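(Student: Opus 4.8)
The plan is to exhibit, for each simplicial set $X$ and each strict 2-groupoid $G$, a natural bijection
\[
  \siigpd(W(X), G) \iso \sset(X, NG),
\]
by constructing an explicit candidate unit $\eta_X \cn X \to NW(X)$ and verifying that it has the requisite universal property. Since $N$ preserves limits and filtered colimits, a left adjoint exists abstractly; the real content is to identify it with the geometric construction $W$ of \cref{defn:whitehead}, so I would work directly with $\eta$ rather than appealing to an adjoint functor theorem.

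First I would define $\eta_X$ on low-dimensional simplices. A vertex is sent to itself, viewed as a point of $|X^{(0)}|$ and hence an object of $W(X)$. A $1$-simplex is sent to the homotopy class of the path it realizes in $|X^{(1)}|$, which is a $1$-cell of $W(X)$. A $2$-simplex $\tau$ realizes to a filled triangle in $|X|$; reparametrizing this triangle as a square with one degenerate vertical side exhibits a $2$-cell of $W(X)$ whose source and target are the appropriate edge paths, that is, a $2$-simplex of the $2$-nerve $NW(X)$. One then checks compatibility with faces and degeneracies: a degenerate edge realizes to a constant path (the identity $1$-cell) and a degenerate $2$-simplex to an identity $2$-cell, which is where strictness of $\siigpd$ enters, while the face compatibility is exactly the recorded boundary of the filling $2$-cell. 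This makes $\eta_X$ a simplicial map, manifestly natural in $X$.

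The heart of the argument is a presentation lemma: $W(X)$ is generated as a strict 2-groupoid by the objects $X_0$, the $1$-cells coming from $X_1$, and the $2$-cells coming from $X_2$, subject only to relations arising from simplices through dimension $3$. Since $|X^{(1)}|$ is a graph, every $1$-cell of $W(X)$ is an edge path up to backtrack cancellation, so the $1$-cells form the free groupoid on the nondegenerate edges; and since $(|X|, |X^{(1)}|)$ is a relative CW pair whose relative cells are the nondegenerate simplices, the homotopy classes of squares defining the $2$-cells are generated by the $2$-simplices modulo relations detected by the $3$-simplices. This is a relative cellular-approximation computation in which only simplices through dimension $3$ contribute, because a strict 2-groupoid records only $\pi_0$, $\pi_1$, and $\pi_2$ relative to the skeleta.

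Granting the presentation lemma, the universal property follows. Given $f \cn X \to NG$, uniqueness of a $2$-functor $\bar f \cn W(X) \to G$ with $N\bar f \circ \eta_X = f$ is immediate, since $\eta_X$ hits the generating cells and a $2$-functor is determined on generators. For existence I would define $\bar f$ on generators using $f_0$, $f_1$, and $f_2$, sending an edge path to the corresponding composite in $G$ (using inverses, available since $G$ is a $2$-groupoid) and a filling square to the corresponding composite of image $2$-cells, and then verify that the relations are respected: backtracking is killed because $f_1(e)\,f_1(e)^{-1} = \id$, and the $2$-dimensional relations are killed precisely because $f$ respects $2$- and $3$-simplices, which encode the associativity and naturality coherences built into the nerve. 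Naturality of the resulting bijection in $G$ is then routine. The main obstacle is exactly the presentation lemma---establishing that homotopy classes of squares in $|X|$ are generated by the $2$-simplices with relations from the $3$-simplices---since this is where the relative cellular approximation and the restriction to low dimensions must be carried out with care; everything else is bookkeeping compatible with the simplicial and $2$-groupoid structures.
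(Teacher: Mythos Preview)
The paper does not prove this theorem at all: it is quoted verbatim from Moerdijk--Svensson \cite[Theorem 2.3]{MS93Algebraic} and used as a black box, so there is no in-paper proof to compare your proposal against.

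That said, your outline is a reasonable sketch of how the result is actually established in the original source. The construction of the unit $\eta_X$ on $0$-, $1$-, and $2$-simplices is correct, and you are right that the substantive content is the presentation lemma identifying $W(X)$ as freely generated (as a strict $2$-groupoid) by the simplices of $X$ through dimension $2$ subject to relations from dimension $3$. One point you gloss over is the extension of $\eta_X$ to simplices of dimension $\geq 3$: you need to check that every $3$-simplex of $X$ maps to a commuting tetrahedron in $W(X)$ (this is where the $3$-simplex relations in your presentation lemma are used in the other direction), and that higher simplices are then handled automatically because $NG$ for a strict $2$-groupoid $G$ is $3$-coskeletal. Your identification of the $1$-cells of $W(X)$ with the free groupoid on the edges is exactly right (cf.\ \cref{rmk:about-WNC} in the paper), and the $2$-cell analysis via relative cellular approximation is indeed the delicate part, as you note.
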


We will need several key properties of $W$ from \cite{MS93Algebraic}; we 
summarize these in the next proposition.
\begin{prop}[{\cite{MS93Algebraic}}]\label{cor:eta_2equiv}
  \
  \begin{enumerate}
  \item\label{it:W-preserves} If $X \to Y$ is a weak equivalence of
    simplicial sets, then $WX \to WY$ is a biequivalence of
    2-groupoids [\textit{loc.~cit.}, Proposition 2.2 (iii)].
  \item\label{it:epz} 
    For a strict 2-groupoid $\cC$, the strict
    2-functor $\epz \cn WN\cC \to \cC$ is a bijective-on-objects
    biequivalence, although its pseudoinverse is only a pseudofunctor
    [\textit{loc.~cit.}, Displays (1.9) and (2.10)].
  \item\label{it:eta} For a simplicial set $X$, the unit 
    $\eta_{X} \cn X \to NWX$ of the adjunction $W \dashv N$ is a
    $P_2$-equivalence.  In particular, if $\cK$ is a 2-category then
    $\eta_{N\cK} \cn N\cK \to NWN\cK$ is a $P_2$-equivalence
    [\textit{loc.~cit.}, Corollary 2.6].
  \item \label{it:W-monoidal} The functor $W$ is strong monoidal with
    respect to the cartesian product [\textit{loc.~cit.}, Proposition
    2.2 (i)].
  \end{enumerate}
\end{prop}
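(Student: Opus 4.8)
The plan is to assemble the four statements from the explicit description of $W$ in \cite{MS93Algebraic}, treating \cref{it:W-monoidal} and \cref{it:eta} as the substantive inputs and deriving the two equivalence claims \cref{it:W-preserves} and \cref{it:epz} formally from them. A useful organizing principle, which I would establish first, is the criterion that a $2$-functor between strict $2$-groupoids is a biequivalence if and only if its nerve is a $P_2$-equivalence: a strict $2$-groupoid carries no homotopical data above dimension $2$, and essential surjectivity together with the equivalences of hom-groupoids are detected exactly by isomorphisms on $\pi_0,\pi_1,\pi_2$ at all basepoints (using that post-composition by an equivalence $1$-cell identifies every automorphism group in a hom-groupoid with $\pi_2$ at the relevant object).

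I would begin with \cref{it:W-monoidal}. Geometric realization is strong monoidal (\cref{prop:classical-monoidal-functors}), so $|X\times Y|\cong |X|\times|Y|$; although the $1$-skeleton of a product is not literally a product of skeleta, every diagonal edge is path-homotopic to a horizontal edge followed by a vertical one, so after passing to path-homotopy classes and disk classes the data of $W(X\times Y)$ decomposes into pairs. Unwinding \cref{defn:whitehead} then shows the canonical comparison $W(X\times Y)\to WX\times WY$ is an isomorphism of strict $2$-groupoids, and $W(*)\cong *$, so $W$ is strong monoidal.

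The technical heart is \cref{it:eta}, that the unit $\eta_X\cn X\to NWX$ is a $P_2$-equivalence. Here the plan is to identify $NWX$ as the $2$-type of $|X|$ and $\eta_X$ as its truncation. On $\pi_0$ this is immediate. On $\pi_2$ it is essentially built in: at a fixed basepoint the endomorphism $2$-cells of the identity $1$-cell in $WX$ are homotopy classes of maps $I\times I\to |X|$ with constant boundary, that is, exactly $\pi_2(|X|)$, so $\pi_2(WX)\cong\pi_2(|X|)$. On $\pi_1$ one combines the description of $1$-cells as path-homotopy classes in the $1$-skeleton $|X^{(1)}|$ with the fact that the $2$-cells impose precisely the relations coming from the $2$-simplices, so that a van Kampen argument yields $\pi_1(WX)\cong\pi_1(|X|)$. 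I expect the main obstacle to be not any single one of these computations but the bookkeeping needed to see that the adjunction unit $\eta_X$ itself---rather than some abstract equivalence---realizes all three isomorphisms simultaneously; this is exactly the content of \cite[Corollary 2.6]{MS93Algebraic}.

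With \cref{it:eta} established, the remaining claims follow formally from the criterion above. For \cref{it:W-preserves}, given a weak equivalence $X\to Y$, naturality of $\eta$ together with \cref{it:eta} shows that $NWX\to NWY$ is a $P_2$-equivalence, whence $WX\to WY$ is a biequivalence. For \cref{it:epz}, the counit $\epz\cn WN\cC\to\cC$ is the identity on $0$-simplices and hence bijective on objects; the triangle identity $N\epz\circ\eta_{N\cC}=\id_{N\cC}$ exhibits a composite that is a $P_2$-equivalence, and since $\eta_{N\cC}$ is a $P_2$-equivalence by \cref{it:eta}, the two-out-of-three property forces $N\epz$ to be one as well, so $\epz$ is a biequivalence. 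The recorded asymmetry---that the pseudoinverse is only a pseudofunctor---reflects that $\epz$ need not admit a strict $2$-functor section, since $WN\cC$ is built freely from the cells of $N\cC$.
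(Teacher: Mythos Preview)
The paper does not actually prove this proposition: it is presented as a summary of results from \cite{MS93Algebraic}, with each item carrying a pointer to the specific location in that reference. So your proposal is not competing with a proof in the paper but rather supplying one where the authors were content to cite.

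Your strategy is sound and in fact more economical than the cited source in one respect: by isolating the criterion that a $2$-functor between strict $2$-groupoids is a biequivalence exactly when its nerve is a $P_2$-equivalence, you reduce items~(\ref{it:W-preserves}) and~(\ref{it:epz}) to formal consequences of item~(\ref{it:eta}) via naturality of $\eta$ and the triangle identity, respectively. The paper does eventually run exactly your $2$-out-of-$3$ argument for $\epz$, but only later, in \cref{lem:epz-P2-equiv}, and only to conclude that $\epz$ is a $P_2$-equivalence rather than a biequivalence---the biequivalence claim in item~(\ref{it:epz}) they simply import from \cite{MS93Algebraic}. Your treatment of item~(\ref{it:W-monoidal}) is correct in outline; the only caveat is that the isomorphism $W(X\times Y)\cong WX\times WY$ requires slightly more care than ``diagonal edges are homotopic to compositions of axis edges,'' since one must also match the $2$-cell data and check strict functoriality of the comparison, but this is exactly what \cite[Proposition~2.2(i)]{MS93Algebraic} records. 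For item~(\ref{it:eta}) you rightly flag the bookkeeping---that $\eta_X$ itself realizes the identifications on $\pi_0,\pi_1,\pi_2$---as the genuine content, and you defer to the cited corollary for it, which is what the paper does as well.
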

\marginproof{cor:eta_2equiv}{prop:WN-adj}

\begin{rmk}\label{rmk:about-WNC}
  For a strict 2-groupoid $\cC$, the 1-cells of $WN\cC$ are freely
  generated by the underlying graph of the 1-cells of $\cC$
  \cite[Display (2.10)]{MS93Algebraic}.
\end{rmk}

Now the nerve functor is defined on all of $\iicatnop$, not just the
subcategory of 2-groupoids or strict 2-groupoids.  Thus we can define
a functor $ \iicatnop \to \siigpd$ using the composite $W \circ
N$. Perhaps surprisingly, this composite is also a left adjoint even
though $N$ is a right adjoint.

\begin{prop}\label{prop:WN-adj}
  The functor $WN$ extends to a functor $\iicatnop \to \siigpd$, and
  is left adjoint to the inclusion
  $i \cn \siigpd \hookrightarrow \iicatnop$.
\end{prop}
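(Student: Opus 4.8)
The plan is to build the adjunction $WN \dashv i$ by splicing together the adjunction $W \dashv N$ of \cref{thm:W} with the defining property of the 2-dimensional nerve as a functor on $\iicatnop$. First I would dispatch the clause that $WN$ \emph{extends to a functor}: the excerpt has already asserted that $N$ is functorial on all of $\iicatnop$ (not merely on $\siigpd$), and $W$ is a functor, so the composite $WN \cn \iicatnop \to \siigpd$ is a functor with no further work. Everything then reduces to producing a bijection $\siigpd(WN\cA, \cB) \cong \iicatnop(\cA, i\cB)$, natural in the 2-category $\cA$ and the strict 2-groupoid $\cB$.

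The essential input is the known fact (\cite{CCG10Nerves}, \cite{Gur2009Nerves}) that this nerve detects normal oplax functors: for 2-categories $\cA$ and $\cB$ there is a bijection, natural in both variables,
\[
  \sset(N\cA, N\cB) \cong \iicatnop(\cA, \cB).
\]
I would unwind it only enough to be sure of the variance: a simplicial map sends objects to objects and 1-cells to 1-cells, its value on the 2-simplex carrying $\id_{gf}$ produces a 2-cell $F(gf) \Rightarrow F(g)F(f)$---the oplax direction, exactly because our convention makes a 2-simplex a 2-cell \emph{into} a composite---the degenerate simplices force $F$ to be normal, and the 3-simplices impose precisely the single oplax coherence axiom; coskeletality of the nerve ensures there is no further data or condition in higher dimensions. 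I would cite this rather than reprove it.

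With that fact in hand the adjunction is a two-step calculation. For a 2-category $\cA$ and a strict 2-groupoid $\cB$,
\[
  \siigpd(WN\cA, \cB) \cong \sset(N\cA, N\cB) \cong \iicatnop(\cA, i\cB),
\]
where the first isomorphism is $W \dashv N$ from \cref{thm:W} and the second is the displayed bijection specialized to the target $\cB$. Composing exhibits $WN$ as left adjoint to $i$, with naturality in $\cA$ coming from functoriality of $N$ on $\iicatnop$ and naturality in $\cB$ from the adjunction $W \dashv N$. The hard part is the middle fact that simplicial maps of nerves are exactly normal oplax functors; the one bookkeeping subtlety beyond it is to check that $N$ applied to the inclusion $i$ returns the very simplicial set used in \cref{thm:W}---that is, that the Moerdijk--Svensson nerve of $\cB$ as a strict 2-groupoid coincides with the 2-dimensional nerve of $i\cB$ as an object of $\iicatnop$---so that the two displayed bijections are composable.
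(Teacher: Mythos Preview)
Your proposal is correct and follows essentially the same route as the paper: cite that $N$ is full and faithful on $\iicatnop$ (the paper attributes this to \cite{Gur2009Nerves}), then chain $\siigpd(WN\cA,\cB) \cong \sset(N\cA,N\cB) \cong \iicatnop(\cA,i\cB)$ using $W \dashv N$ for the first step. The paper's proof is just these two sentences; your additional discussion of the oplax variance and the nerve-agreement bookkeeping is accurate but more than the paper records.
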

\begin{proof}\proofof{prop:WN-adj}\marginref{thm:W}
  First note that $N$ extends to a full and faithful functor
  $\iicatnop \to \sset$ by \cite{Gur2009Nerves}.  Thus we have natural
  isomorphisms
  \[
    \siigpd(WN\cA, \cB) \cong \sset(N\cA, N\cB) \cong \iicatnop(\cA, i\cB).
  \]
\end{proof}

\begin{rmk}\label{rmk:nop=np}
  Note that, in the above proof, $\cB$ is a strict 2-groupoid so in
  particular every normal oplax functor $\cA \to i\cB$ is in fact a normal
  pseudofunctor.  Thus we have a natural isomorphism
  $\siigpd(WN\cA, \cB) \cong \iicatnps(\cA, i\cB)$ as well, so $WN$ is also
  left adjoint to the inclusion $\siigpd \hookrightarrow \iicatnps$.
\end{rmk}

Note since the nerve functor $N\cn \iicatnop \to \sset$ is full and
faithful, $\eta_{N\cK}$ is in fact in the image of $N$.

\begin{notn}\label{notn:epdot}
  Let $\epz_{\cK}^{\centerdot} \cn \cK \to WN \cK$ be the unique
  normal pseudofunctor (\cref{rmk:nop=np}) with
  $N\epz^{\centerdot}_{\cK} = \eta_{N \cK}$. Note that
  $\epz^{\centerdot}$ is strictly natural in normal pseudofunctors.
\end{notn}
\marginproof{notn:epdot}{cor:eta_2equiv}

\begin{rmk}\label{rmk:epz-is-retraction}
  The triangle identities for $\eta$ and $\epz$ show that the
  composite $\epz \epz^\centerdot$ is the identity 2-functor. The unit
  and counit of the adjunction $WN \dashv i$ are given, respectively,
  by $\epz^\centerdot$ and $\epz$.
\end{rmk}

\begin{lem}\label{lem:epz-P2-equiv}
  The transformations $\epz^\centerdot$ and $\epz$ are $P_2$-equivalences.
\end{lem}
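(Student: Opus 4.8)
The plan is to detect everything on nerves via \cref{defn:p2-eq-SMB} and then quote \cref{cor:eta_2equiv}, so that essentially no new computation is required. Both $\epz^\centerdot$ and $\epz$ are maps of (strict) $2$-groupoids, and $P_2$-equivalences of such functors are by definition those whose nerves are $P_2$-equivalences of simplicial sets, so the whole argument lives at the level of $N$.

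First I would dispatch $\epz^\centerdot$. By \cref{notn:epdot} its nerve is exactly the unit map, $N\epz^\centerdot_\cK = \eta_{N\cK}$, and \cref{cor:eta_2equiv} states precisely that this unit is a $P_2$-equivalence of simplicial sets. Unwinding \cref{defn:p2-eq-SMB}, this immediately gives that $\epz^\centerdot_\cK$ is a $P_2$-equivalence for every $2$-category $\cK$.

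For $\epz$ I would exploit the retraction identity $\epz\,\epz^\centerdot = \id$ from \cref{rmk:epz-is-retraction}. Applying the (functorial) nerve and substituting $N\epz^\centerdot = \eta$ yields $N\epz \circ \eta = \id$. The identity map is trivially a $P_2$-equivalence, and $\eta$ is one by the previous step, so the two-out-of-three property for $P_2$-equivalences---recorded in the discussion after \cref{defn:n-equiv}, where they are identified with the two-out-of-three closure of the $3$-equivalences---forces $N\epz$ to be a $P_2$-equivalence, and hence $\epz$ is as well. A conceptually cleaner alternative, if one prefers, is to quote \cref{cor:eta_2equiv} directly: each $\epz_\cC$ is a biequivalence of $2$-groupoids, and a biequivalence induces a weak homotopy equivalence of nerves, which is \emph{a fortiori} a $P_2$-equivalence.

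The arguments are short, so the only points demanding care---rather than a genuine obstacle---are bookkeeping ones. I would confirm that $N$ is honestly functorial on the composite $\epz\,\epz^\centerdot$, which mixes the strict $2$-functor $\epz$ with the merely normal pseudofunctor $\epz^\centerdot$; this is fine because $N$ is defined and functorial on all of $\iicatnop$. I would also be careful that the two-out-of-three step is applied with the right variance and over all basepoints, which is automatic since a $P_2$-equivalence is in particular a bijection on $\pi_0$, so every path component of the intermediate nerve $NWN\cK$ is hit by $\eta$, where we have already verified the retraction is an isomorphism on $\pi_i$ for $i \le 2$.
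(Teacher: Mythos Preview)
Your proposal is correct and follows essentially the same approach as the paper: deduce that $\epz^\centerdot$ is a $P_2$-equivalence from $N\epz^\centerdot = \eta$ and \cref{cor:eta_2equiv}(\ref{it:eta}), then obtain $\epz$ from the retraction identity $\epz\,\epz^\centerdot = \id$ and two-out-of-three. One small imprecision: the components $\epz^\centerdot_\cK$ have domain an arbitrary $2$-category $\cK$, not a strict $2$-groupoid, but this does not affect the argument since \cref{defn:p2-eq-SMB} applies to any functor of bicategories.
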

\begin{proof}\proofof{lem:epz-P2-equiv}
  Recall that $\eta$ is a $P_2$-equivalence by
  \cref{cor:eta_2equiv}~(\ref{it:eta}).  Since the nerve functor
  creates $P_2$-equivalences of 2-categories, $\epz^\centerdot$ is
  also a $P_2$-equivalence.  This implies that $\epz$ is a
  $P_2$-equivalence by \cref{rmk:epz-is-retraction} and 2-out-of-3.
\end{proof}

This accomplishes the first goal of this section, to produce from a
2-category $\cC$ a strict 2-groupoid $WN \cC$ and a pseudofunctor $\cC \to
WN \cC$ which is a natural $P_2$-equivalence.  We now turn to
incorporating the symmetric monoidal structure.

\begin{prop}\label{prop:WN-monoidal-adjunction}
  The adjunction $WN \dashv i$ of \cref{prop:WN-adj} is monoidal with
  respect to the cartesian product.
\end{prop}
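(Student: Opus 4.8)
The plan is to show that the adjunction $WN \dashv i$ is a monoidal adjunction by exhibiting the left adjoint $WN$ as a strong monoidal functor; it is a standard fact that when the left adjoint is strong monoidal and its structure maps are the mates of lax structure maps on the right adjoint, the adjunction is monoidal (equivalently, the right adjoint is lax monoidal and the unit and counit are monoidal transformations). So first I would assemble the monoidal structure on $WN$ from its two constituent functors. By \cref{cor:eta_2equiv}~(\ref{it:W-monoidal}), the Whitehead 2-groupoid functor $W \cn \sset \to \siigpd$ is strong monoidal with respect to cartesian product. The subtlety is that $N$ on $\iicatnop$ is \emph{not} strong monoidal for cartesian product---on $\iicatps$ it is strong monoidal by \cref{prop:classical-monoidal-functors}, but for oplax functors the natural comparison goes the other way. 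The key point (see \cref{rem:id-is-lax-monoidal}) is that the relevant structure map $N(\cA \times \cB) \to N\cA \times N\cB$ is still available and is what we need, since $N$ preserves products as a right adjoint and the nerve of a product of 2-categories is computed levelwise.

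Concretely, I would first record that the composite $WN \cn \iicatnop \to \siigpd$ carries a lax (indeed strong) monoidal structure whose comparison map
\[
  WN\cA \times WN\cB \longrightarrow WN(\cA \times \cB)
\]
is obtained by composing the strong monoidal structure isomorphism of $W$,
\[
  WN\cA \times WN\cB \iso W(N\cA \times N\cB),
\]
with $W$ applied to the product-comparison map $N\cA \times N\cB \to N(\cA \times \cB)$ for the 2-dimensional nerve. Then I would verify that the right adjoint inclusion $i \cn \siigpd \hookrightarrow \iicatnop$ is lax monoidal: a product of strict 2-groupoids is again a strict 2-groupoid (invertibility of cells is checked componentwise), so $i$ in fact strictly preserves products, and its structure map is the identity. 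The heart of the argument is then to check that these two monoidal structures are \emph{mates} under the adjunction of \cref{prop:WN-adj}, equivalently that the unit $\epz^{\centerdot}$ and counit $\epz$ are monoidal transformations; by the doctrinal adjunction theorem of Kelly, producing the monoidal adjunction reduces to this compatibility.

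The main obstacle I anticipate is the mismatch of variance for the nerve on oplax functors. On $\iicatps$ the nerve is strong symmetric monoidal, but the functor in \cref{prop:WN-adj} is defined on $\iicatnop$, where the comparison 2-cells for the product run oplaxly; I must confirm that the single comparison map $N\cA \times N\cB \to N(\cA \times \cB)$ needed here is genuinely natural in normal oplax functors and is an isomorphism (which holds because $N$ preserves the limit $\cA \times \cB$ as a right adjoint, and the nerve of a 2-category is built levelwise from hom-categories). Once that naturality and invertibility are in hand, the coherence axioms for a monoidal functor---associativity and unit compatibility---follow formally from the corresponding coherences for $W$ (which are given) and for the nerve's product comparison, and from the fact that $i$ preserves products strictly. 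I would therefore present the proof as: (i) recall $W$ is strong monoidal; (ii) supply and check the product comparison for $N$ on $\iicatnop$; (iii) compose to get strong monoidal $WN$; (iv) note $i$ is strict monoidal; and (v) invoke doctrinal adjunction, checking that $\epz^{\centerdot}$ and $\epz$ are monoidal, to conclude the adjunction is monoidal.
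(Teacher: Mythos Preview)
Your proposal is correct and takes essentially the same approach as the paper: show $WN$ is strong monoidal by composing the strong monoidal structure on $W$ with the product-preservation of $N$, then verify that $\epz^{\centerdot}$ and $\epz$ are monoidal transformations. The paper dispatches the latter as ``straightforward calculations'' rather than invoking doctrinal adjunction, and it does not pause over the naturality of the product comparison for $N$ on $\iicatnop$---your concern there is legitimate but, as you note, easily resolved since the isomorphism $N(\cA \times \cB) \cong N\cA \times N\cB$ is defined levelwise and is natural in any morphisms for which $N$ is functorial.
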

\begin{proof}\proofof{prop:WN-monoidal-adjunction}
  Note that $N$ preserves products since it is a right adjoint
  (\cref{prop:classical-monoidal-functors}), and therefore $WN$ is
  strong monoidal by \cref{cor:eta_2equiv}~(\ref{it:W-monoidal}).
  Straightforward calculations show that $\epz^\centerdot$ and $\epz$ are
  monoidal transformations.  
\end{proof}

\begin{notn}\label{notn:maps-to-WSBO}
  Let
  \[
  h \cn WN(-) \Rightarrow WS|N(-)| = WSB(-)
  \]
  denote the natural transformation induced by the unit $\id
  \Rightarrow S|-|$.  
\end{notn}

Applying symmetric monoidal functors to the operad $\cO$, we have the
following corollary (see \cref{lem:operad_monoidal}).
\begin{cor}\label{cor:operads-WNO-WSBO}
  There are operads $WN\cO$ and $WSB\cO$.  The transformations
  $\epz$, $\epz^\centerdot$, and $h$ induce operad maps
  \[
  \epz \cn WN\cO \to \cO, \qquad 
  \epz^\centerdot \cn \cO \to WN\cO, \qquad \text{and} \qquad 
  h \cn WN\cO \to WSB\cO.
  \]
\end{cor}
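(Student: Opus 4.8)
The plan is to deduce all three assertions from \cref{lem:operad_monoidal}, whose hypotheses ask only for lax symmetric monoidal functors and monoidal natural transformations with respect to the relevant cartesian products. Throughout I will use that, for cartesian monoidal structures, a strong monoidal functor is automatically symmetric monoidal and a monoidal transformation automatically compatible with the symmetries; thus the ``symmetric'' clauses in \cref{lem:operad_monoidal} are free, and it suffices to track products, units, and monoidal transformations.

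First I would produce the two operads using part (1) of \cref{lem:operad_monoidal}. By \cref{prop:WN-monoidal-adjunction} the functor $WN$ is strong monoidal for the cartesian product. The functor $WSB = WS|N(-)|$ is the composite of $N$, $|-|$, $S$, and $W$, each of which is strong symmetric monoidal---by \cref{prop:classical-monoidal-functors} for $N$, $|-|$, $S$, and by \cref{cor:eta_2equiv}~(\ref{it:W-monoidal}) for $W$---so $WSB$ is strong monoidal as well. Applying \cref{lem:operad_monoidal}~(1) to the operad $\cO$ then yields the operads $WN\cO$ and $WSB\cO$.

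Next I would realize each of the three maps via part (3) of \cref{lem:operad_monoidal}. The components $\cO(n)$ are strict $2$-groupoids (the translation categories of the $\Sigma_n$ with identity $2$-cells), so $\cO$ may be viewed as an operad in $(\siigpd,\times)$, and by \cref{rmk:epz-is-retraction} together with \cref{prop:WN-monoidal-adjunction} both $\epz^\centerdot$ and $\epz$ are monoidal natural transformations: the unit $\Id \Rightarrow iWN$ and the counit $WNi \Rightarrow \Id$ of the monoidal adjunction $WN \dashv i$. Taking $(F,G,\al) = (\Id, iWN, \epz^\centerdot)$ produces the operad map $\epz^\centerdot \cn \cO \to WN\cO$, and taking $(F,G,\al) = (WNi, \Id, \epz)$ produces $\epz \cn WN\cO \to \cO$. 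For $h$, the unit $\id \Rightarrow S|-|$ is a monoidal natural transformation by \cref{prop:classical-monoidal-functors}; whiskering by the strong monoidal $N$ and then applying the strong monoidal $W$ preserves monoidality, so $h\cn WN \Rightarrow WSB$ is monoidal and $(F,G,\al) = (WN, WSB, h)$ gives $h \cn WN\cO \to WSB\cO$.

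The one point that requires care---and the main obstacle---is matching the naturality actually available for $\epz$ and $\epz^\centerdot$ with the naturality that \cref{lem:operad_monoidal}~(3) demands. By \cref{notn:epdot}, $\epz^\centerdot$ is strictly natural only in normal pseudofunctors and $\epz$ only in $2$-functors, whereas the induced map of operads must be compatible with the operadic structure maps $\ga$ and $\idop$ of $\cO$. This is exactly where it matters that those structure maps are genuine $2$-functors (hence in particular normal pseudofunctors, cf.\ \cref{cor:operads-2cat-variants} and \cref{rmk:nop=np}): on this class of morphisms the relevant naturality squares commute strictly, so the hypotheses of \cref{lem:operad_monoidal}~(3) hold and each induced family of components is equivariant and compatible with $\ga$ and $\idop$, i.e.\ a map of operads.
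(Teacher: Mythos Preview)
Your proposal is correct and follows the same approach the paper indicates: apply \cref{lem:operad_monoidal} using the monoidal data supplied by \cref{prop:WN-monoidal-adjunction}, \cref{prop:classical-monoidal-functors}, and \cref{cor:eta_2equiv}~(\ref{it:W-monoidal}). The paper leaves this corollary essentially unproved beyond the sentence ``Applying symmetric monoidal functors to the operad $\cO$ \ldots\ (see \cref{lem:operad_monoidal})'' and the margin references to \cref{prop:WN-monoidal-adjunction}, \cref{defn:operadO}, and \cref{lem:operad_monoidal}; your write-up simply fills in the details that the paper takes for granted, in particular the observation that each $\cO(n)$ is a strict $2$-groupoid so that $\epz$ and $\epz^{\centerdot}$ are available as monoidal transformations on the relevant categories.
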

\marginproof{cor:operads-WNO-WSBO}{prop:WN-monoidal-adjunction,defn:operadO,lem:operad_monoidal}

\begin{notn}\label{defn:wt-can-choice}
Let $\wncan$ denote the choice of multiplication in $WN\cO$ given
by
  applying $\epz^\centerdot$ to the canonical choice $\can$
  (\cref{prop:operad-map-induces-mult-choice}).
\end{notn}
 
\begin{prop}\label{prop:zigzag1}
  Given a permutative Gray-monoid $\cA$, there is a natural zigzag of
  strict functors of symmetric monoidal 2-categories as shown
  below. The left leg is a $P_2$-equivalence and the right leg is a
  biequivalence.
     \[
  \begin{tikzpicture}[x=30mm,y=20mm]
    \draw[tikzob,mm] 
    (0,0) node (a) {\cA}
    ++(.35,0) node (ka) {\can^* \cA}
    ++(.55,0) node (ka') {\wncan^* \epz^* \cA}
    ++(1,.8) node (wna) {\wncan^* WN(\cA)}
    ++(1,-.8) node (wsba) {\wncan^* h^* WSB(\cA).}
    ;
    \path[tikzar,mm] 
    (wna) edge[swap] node {\wncan^*(\epz_{\cA})} (ka')
    (wna) edge node {\wncan^*(h_{\cA})} (wsba)
    ;
    \node[mm] at ($ (a) !.4! (ka) $) {=};
    \node[mm] at ($ (ka) !.45! (ka') $) {=};
  \end{tikzpicture}
  \] 
\end{prop}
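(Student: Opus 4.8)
The plan is to obtain the whole span by applying the single functor $\wncan^*$ to a span of $WN\cO$-algebra morphisms; then all three terms are symmetric monoidal $2$-categories and both legs are strict functors for free, since $\wncan^*$ is the identity on underlying $2$-categories and $2$-functors (\cref{thm:choice-of-mult-to-SMB}). First I would check that $WN\cO$ lies in $\bC(\leq 5)$: since $\cO$ is in $\bC(\leq\infty)$ and $W$ carries weak equivalences to biequivalences (\cref{cor:eta_2equiv}), each $WN\cO(k)$ is contractible, so $\wncan$ is a legitimate choice of multiplication on an operad to which \cref{thm:choice-of-mult-to-SMB} applies. Regarding $\cA$ as an $\cO$-algebra (\cref{prop:PGM-is-OAlg}), the three objects of the span are the $WN\cO$-algebras $\epz^*\cA$, $WN(\cA)$ (via the monoidal functor $WN$ of \cref{lem:operad_monoidal}), and $h^*WSB(\cA)$, built from the operad maps $\epz$ and $h$ of \cref{cor:operads-WNO-WSBO}.

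Next I would justify the equalities on the left. The identity $\cA=\can^*\cA$ is \cref{prop:PGM-as-O-algebra}. For $\can^*\cA=\wncan^*\epz^*\cA$ I would apply \cref{prop:chistar-fstar-equals-chifstar} to the operad map $\epz\cn WN\cO\to\cO$ equipped with the choice $\wncan$, giving the equality of functors $\wncan^*\circ\epz^*=(\epz(\wncan))^*$; it then suffices to identify the choice $\epz(\wncan)$ on $\cO$ (\cref{prop:operad-map-induces-mult-choice}) with $\can$, which follows from $\wncan=\epz^{\centerdot}(\can)$ (\cref{defn:wt-can-choice}) together with $\epz\circ\epz^{\centerdot}=\id$ (\cref{rmk:epz-is-retraction}).

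The two legs I would produce as the components at $\cA$ of the natural transformations of algebra-valued functors supplied by \cref{lem:operad_monoidal}: the monoidal transformation $\epz$ gives a morphism $\epz_\cA\cn WN(\cA)\to\epz^*\cA$ of $WN\cO$-algebras, and the monoidal transformation $h$ of \cref{notn:maps-to-WSBO} gives $h_\cA\cn WN(\cA)\to h^*WSB(\cA)$. Applying $\wncan^*$ turns these into the two strict functors in the statement, and because $\wncan^*$ leaves underlying $2$-functors untouched, the two equivalence claims become claims about $\epz_\cA$ and $h_\cA$ themselves. The left leg $\epz_\cA\cn WN(\cA)\to\cA$ is a $P_2$-equivalence by \cref{lem:epz-P2-equiv}. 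For the right leg I would observe that $h_\cA$ is $W$ applied to the unit $N\cA\to S|N\cA|=SB\cA$ of the realization--singular adjunction, which is a weak equivalence of simplicial sets; hence $h_\cA$ is a biequivalence by \cref{cor:eta_2equiv}. Naturality of the entire span in strict functors of permutative Gray-monoids is then inherited from the naturality of $\epz$ and $h$ and the functoriality of $\wncan^*$.

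I expect the main difficulty to be bookkeeping the algebra structures rather than any analytic estimate. Concretely, I must ensure that $\epz$ and $h$ are genuinely \emph{monoidal} natural transformations between the monoidal endofunctors $WN$, $\id$, and $WSB$ of $\iicat$ (for $\epz$ this is \cref{prop:WN-monoidal-adjunction}), so that \cref{lem:operad_monoidal} applies and the induced maps preserve the symmetric monoidal structures strictly; and I must check that the evaluation $\epz_\cA\cn WN(\cA)\to\cA$ is a well-defined strict $2$-functor of algebras for a general, non-groupoidal $\cA$, where the description of $WN(\cA)$ as having freely generated $1$-cells (\cref{rmk:about-WNC}) is what makes strictness possible. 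Once the monoidal-transformation framework is in place, the identification $\epz(\wncan)=\can$ and the naturality verifications are routine.
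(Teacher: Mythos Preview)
Your proposal is correct and follows essentially the same route as the paper: build the $WN\cO$-algebra span $\epz^*\cA \leftarrow WN(\cA) \to h^*WSB(\cA)$ from the components of $\epz$ and $h$, apply $\wncan^*$, identify $\wncan^*\epz^*=\can^*$ via $\epz\epz^\centerdot=\id$, and invoke \cref{lem:epz-P2-equiv} and \cref{cor:eta_2equiv}~(\ref{it:W-preserves}) for the two equivalence claims. You are more explicit in places (naming \cref{prop:chistar-fstar-equals-chifstar}, checking $WN\cO\in\bC(\leq 5)$, and flagging the bookkeeping around $\epz_\cA$), but the structure of the argument is identical.
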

\begin{proof}\proofof{prop:zigzag1}\marginref{thm:choice-of-mult-to-SMB,cor:operads-WNO-WSBO}
  Recall that we implicitly regard $\cA$ as an
  $\cO$-algebra via \cref{prop:PGM-is-OAlg}. Therefore we
  have a zigzag of $WN\cO$-algebra maps (note that these have underlying
 2-functors) induced by the
  components of $\epz$ and $h$, respectively,
  \[
    \epz^* \cA \longleftarrow WN(\cA) \longrightarrow 
    h^* WSB(\cA).
  \]
We have $\cA = \can^* \cA$ by
  \cref{prop:PGM-as-O-algebra}.  Note $\wncan^* \epz^* = \can^*$
  because $\epz \epz^\centerdot = \id$ (\cref{rmk:epz-is-retraction}).
  This gives a zigzag of symmetric monoidal 2-categories and strict functors.
  Naturality follows from naturality of
  $\epz$ and $h$.  Moreover, $\epz$ is a $P_2$-equivalence by
  \cref{lem:epz-P2-equiv} and $h$ is a biequivalence
  because $W$ sends weak equivalences to
  biequivalences by \cref{cor:eta_2equiv}~(\ref{it:W-preserves}).
\end{proof}

It is clear that the property of being Picard is preserved by
biequivalences and, moreover, every $P_2$-equivalence of Picard
2-categories is a biequivalence.
Therefore we have the following corollary of \cref{prop:zigzag1}.
\begin{cor}\label{cor:zigzag1-picard}
  If $\cA$ is a strict Picard 2-category, then the span in
  \cref{prop:zigzag1} is a span of Picard 2-categories.
  \marginproof{cor:zigzag1-picard}{prop:zigzag1}
\end{cor}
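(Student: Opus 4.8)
The plan is to verify the Picard condition---invertibility of all $0$-, $1$-, and $2$-cells---at each of the three vertices of the span separately, making essential use of the fact recorded just before the statement that being Picard is preserved by biequivalences. First I observe that, since $\wncan^*$ and $h^*$ leave underlying 2-categories unchanged, the three underlying 2-categories of the span are $\cA$ itself (because $\wncan^* \epz^* \cA = \can^* \cA = \cA$, as in the proof of \cref{prop:zigzag1}), together with $WN(\cA)$ and $WSB(\cA)$. The leftmost vertex is Picard by hypothesis. The other two are outputs of the Whitehead functor $W$, hence strict 2-groupoids by construction, so every $1$-cell and every $2$-cell is already strictly invertible. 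The only remaining issue at these two vertices is invertibility of the objects, that is, that each object admits a monoidal inverse.

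For the apex $WN(\cA)$, I would compare along the left leg. By \cref{prop:zigzag1} the underlying 2-functor of $\wncan^*(\epz_{\cA})$ is a strict symmetric monoidal $P_2$-equivalence $WN(\cA) \to \cA$. Passing to $\pi_0$ of classifying spaces, a $P_2$-equivalence induces an isomorphism on $\pi_0$, and because the functor is strict symmetric monoidal this isomorphism is one of monoids; here $\pi_0$ is the monoid of objects modulo the relation of being joined by a $1$-cell, with multiplication induced by $\oplus$. Since $\cA$ is Picard, the monoid $\pi_0(\cA)$ is a group, so $\pi_0(WN(\cA))$ is a group as well. Concretely, for an object $x$ of $WN(\cA)$ a representative of the inverse class furnishes an object $y$ with $x \oplus y$ and $y \oplus x$ each joined to $e$ by a $1$-cell, and any such $1$-cell is an equivalence since $WN(\cA)$ is a 2-groupoid; thus every object of $WN(\cA)$ is invertible and $WN(\cA)$ is Picard.

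For the right foot $WSB(\cA)$, I would invoke that the right leg $\wncan^*(h_{\cA})$ is a biequivalence, again by \cref{prop:zigzag1}, together with the standing fact that being Picard is invariant under biequivalence. As $WN(\cA)$ is Picard by the previous paragraph, it follows immediately that $\wncan^* h^* WSB(\cA)$ is Picard. This shows all three vertices are Picard and proves the corollary. The step requiring the most care is the middle one: being a strict 2-groupoid supplies invertibility of $1$- and $2$-cells for free but says nothing about objects, so invertibility of objects in $WN(\cA)$ must be extracted from the homotopical comparison with $\cA$---precisely from the observation that the $P_2$-equivalence is strict symmetric monoidal and hence induces a monoid isomorphism on $\pi_0$.
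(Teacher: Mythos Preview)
Your proof is correct and is essentially a careful unpacking of the paper's one-sentence justification preceding the corollary (that being Picard is invariant under biequivalence, together with the implicit fact that $W$ always outputs strict 2-groupoids so only object-invertibility remains at issue). The paper leaves the $\pi_0$ argument for the apex $WN(\cA)$ implicit; your explicit treatment---using that the strict monoidal $P_2$-equivalence $\epz_\cA$ induces a monoid isomorphism on $\pi_0$ and that in a 2-groupoid a group structure on $\pi_0$ forces object-invertibility---is exactly the detail needed to make the paper's remark into a proof.
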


\subsection{\texorpdfstring{$E_\infty$}{E\_infty}-algebras and stable homotopy theory of
  symmetric monoidal bicategories}
\label{sec:K-thy}

In this section, we show that the composite $WS$, combined with
any choice of multiplication, sends stable equivalences of $E_\infty$
spaces to stable $P_2$-equivalences of symmetric monoidal 2-groupoids.

Our notions of stable equivalence, stable $n$-equivalence, and
$P_n$-equivalence for strict functors of symmetric monoidal bicategories
are created by the $K$-theory functors of
\cite{GJO2017KTheory,GO2012Infinite}, which construct infinite loop
spaces from bicategories and 2-categories.  We begin with a review of
these functors and then apply the theory of $E_\infty$ algebras in
$\Top$.

\begin{thm}[{\cite{GO2012Infinite,GJO2017KTheory}}]\label{thm:property-of-Kthy}
  There is a functor $K\cn \SMBicat_{s} \to \Sp_{0}$.
  For a symmetric monoidal bicategory $\cA$, $K\cA$ is a positive
  $\Om$-spectrum, with the property that
  \[
  B\cA \simeq K\cA(0) \to \Om K\cA(1)
  \]
  is a group-completion. In particular, we have that 
  \[
  \pi_n(K\cA) \cong \pi_n(\Om B (B\cA)),
  \]
  where the latter are the unstable homotopy groups of the topological
  group-completion of the classifying space $B\cA$.  
\end{thm}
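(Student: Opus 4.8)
The plan is to realize $K$ as an infinite loop space machine applied to the classifying space of a symmetric monoidal bicategory, using the operadic input assembled in \cref{sec:operads}. First I would reduce to the semi-strict case. By the quasistrictification of \cref{cohqs2cats}, every symmetric monoidal bicategory $\cA$ is connected by natural strict biequivalences $\cA \leftarrow \cA^{c} \to \cA^{qst}$ to a permutative Gray-monoid, and a biequivalence induces a homotopy equivalence of classifying spaces. So it suffices to construct $K$ on $\PGM$ with the stated group-completion property and then extend along these natural equivalences to all of $\SMBicat_{s}$; the only care needed is that the extension be functorial, since the comparison to $\cA^{qst}$ is a zigzag rather than a single map.

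On a permutative Gray-monoid $\cA$ I would produce the requisite $E_\infty$-structure directly. By \cref{prop:PGM-is-OAlg}, $\cA$ is an algebra for the Barratt-Eccles operad $\cO$ acting on $(\iicatps,\times)$, and applying the strong symmetric monoidal functor $B = |N(-)|$ of \cref{prop:classical-monoidal-functors} turns $B\cA$ into an algebra over the topological operad $B\cO$ (\cref{defn:operadO}). Since $B\cO$ is an $E_\infty$ operad (\cref{rmk:peter-invented-o}), $B\cA$ is an $E_\infty$-space, strictly and functorially in strict functors. Feeding $B\cA$ into May's infinite loop space machine \cite{May1974Einfty} yields a connective positive $\Om$-spectrum $K\cA$ together with the natural group-completion map $B\cA \simeq K\cA(0) \to \Om K\cA(1)$; this is exactly the defining output of the machine and gives the second assertion.

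Finally I would read off the homotopy groups. For a positive $\Om$-spectrum the structure maps $K\cA(k) \to \Om K\cA(k+1)$ are weak equivalences for $k \ge 1$, so the colimit defining $\pi_n K\cA$ stabilizes immediately and $\pi_n(K\cA) \cong \pi_n(\Om K\cA(1))$. Since $\Om K\cA(1)$ is a topological group-completion of the homotopy-commutative $H$-space $B\cA$, it is weakly equivalent to $\Om B(B\cA)$ by the uniqueness of group-completions up to weak equivalence (the remark following \cref{defn:gpcompletion}), whence $\pi_n(K\cA) \cong \pi_n(\Om B(B\cA))$. The main obstacle, and the reason for working with the rigid Barratt-Eccles model rather than an abstract $E_\infty$-structure, is obtaining a genuine, strictly functorial operad action on $B\cA$ so that the machine applies on the nose; the subsequent group-completion property is then the content of May's theorem, and the homotopy-group computation is routine bookkeeping.
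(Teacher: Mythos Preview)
The paper does not supply its own proof of this statement; it is recorded as a theorem with attribution to \cite{GO2012Infinite,GJO2017KTheory} and used thereafter as a black box. So there is nothing in the present paper to compare your proposal against directly.

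Your sketch is a plausible reconstruction of one route to the result, and the core mechanism---that $B\cA$ carries an $E_\infty$-structure and May's machine outputs a positive $\Omega$-spectrum whose bottom map is a group-completion---is correct. A few remarks on how it lines up with the cited sources. The construction in \cite{GO2012Infinite} does not pass through the Barratt-Eccles operad on permutative Gray-monoids; it works directly with general symmetric monoidal bicategories and produces operad actions via the little $n$-cubes operads (the reference to \cite[\S 2.2]{GO2012Infinite} in \cref{prop:WSCinfty-mult-choice} of the present paper hints at that machinery). The construction in \cite{GJO2017KTheory} is Segal-style, via $\Gamma$-2-categories rather than operads. Your route through $\cO$ and $B\cO$ is a third, closely related, way to the same destination.

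The one place where your sketch is genuinely incomplete is the functoriality of the extension from $\PGM$ to $\SMBicat_s$. You flag it but do not resolve it: a natural \emph{zigzag} of biequivalences does not by itself produce a functor. The cleanest fix within your framework is to set $K(\cA) = \text{machine}(B(\cA^{qst}))$, since $(-)^{qst}$ is already an endofunctor on $\SMBicat_s$ by \cref{cohqs2cats}; the zigzag $\cA \leftarrow \cA^c \to \cA^{qst}$ of biequivalences then yields $B\cA \simeq B(\cA^{qst}) \simeq K\cA(0)$, giving the stated group-completion property for the original $\cA$. This is a detail, not a gap in the mathematics, but it is the sort of thing one would want made explicit in a full proof.
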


\begin{defn}\label{defn:st-eq-SMB}
  A strict functor $F\cn \cA \to \cB$ of symmetric monoidal bicategories is a
  \emph{stable equivalence} if the induced map of spectra $KF\cn K\cA
  \to K\cB$ is a stable equivalence. Similarly, $F$ is a stable
  $n$-equivalence, respectively stable $P_n$-equivalence, if $KF$ is so.
\end{defn}

\begin{lem}
  Let $F \cn \cA \to \cB$ be a strict functor such that
  $BF \cn B\cA \to B\cB$ is a weak equivalence. Then $F$ is a stable
  equivalence, and hence, also a stable $P_n$-equivalence for all
  $n\geq 0$.
\end{lem}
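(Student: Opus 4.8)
The plan is to reduce the statement to \cref{prop:n-equiv}, using the identification of stable homotopy groups with the homotopy groups of a topological group-completion supplied by \cref{thm:property-of-Kthy}. By \cref{defn:st-eq-SMB}, $F$ is a stable equivalence precisely when $KF$ induces an isomorphism on all stable homotopy groups, and \cref{thm:property-of-Kthy} gives natural isomorphisms $\pi_n(K\cA) \cong \pi_n(\Om B(B\cA))$, where the right-hand side is the homotopy of the topological group-completion of the classifying space. Thus it suffices to show that the induced map on group-completions $\Om B(BF) \cn \Om B(B\cA) \to \Om B(B\cB)$ is a weak homotopy equivalence.

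The first step is to record that $BF$ is an $E_\infty$ map, so that \cref{prop:n-equiv} applies. Since $N$ and $|-|$ are strong symmetric monoidal (\cref{prop:classical-monoidal-functors}), the classifying space $B = |N(-)|$ carries the symmetric monoidal structure to an action of the $E_\infty$ operad $B\cO$ on $B\cA$ (\cref{defn:operadO}, \cref{rmk:peter-invented-o}); as $F$ is a strict functor of symmetric monoidal bicategories, $BF$ is a map of $B\cO$-algebras, hence an $E_\infty$ map. For a general symmetric monoidal bicategory one first passes to its quasistrictification via \cref{cohqs2cats}, which is a permutative Gray-monoid and therefore an $\cO$-algebra; the accompanying strict biequivalences do not affect the argument. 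With this in place the conclusion is formal: because $BF$ is a weak homotopy equivalence it is an $n$-equivalence for every $n \ge 0$ and every basepoint, so \cref{prop:n-equiv} shows $\Om B(BF)$ is an $n$-equivalence for every $n$, and hence a weak homotopy equivalence. Under the natural isomorphisms of \cref{thm:property-of-Kthy}, $\pi_n(KF)$ is identified with $\pi_n(\Om B(BF))$, an isomorphism for all $n$, so $KF$ is a stable equivalence. A stable equivalence is in particular a stable $P_n$-equivalence for every $n \ge 0$, since an isomorphism on all stable homotopy groups certainly restricts to an isomorphism on $\pi_i$ for $0 \le i \le n$.

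The main obstacle is the structural one already isolated above: verifying that $BF$ is an $E_\infty$ map and that the group-completion comparison of \cref{thm:property-of-Kthy} is natural enough to identify $\pi_n(KF)$ with $\pi_n(\Om B(BF))$. Once these two points are granted, the remainder is a direct application of \cref{prop:n-equiv} across all $n$, with no further homotopical input required.
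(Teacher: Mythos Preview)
Your argument is correct in outline, but it takes a substantially longer route than the paper. The paper's proof is a single line: the map of spectra $KF$ is a \emph{level} equivalence, hence a stable equivalence. This relies on knowing how $K$ is built: each space $K\cA(n)$ in the spectrum is the classifying space of a 2-category constructed functorially from $\cA$ (via the $\Gamma$-2-category / Segal machine of \cite{GJO2017KTheory,GO2012Infinite}), so a strict functor $F$ with $BF$ a weak equivalence induces a weak equivalence on every level. No group-completion theorem, no $E_\infty$ structure on $BF$, and no appeal to \cref{prop:n-equiv} are needed.

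Your approach instead extracts only the \emph{output} property of $K$ recorded in \cref{thm:property-of-Kthy}---that $\pi_n K\cA \cong \pi_n \Om B(B\cA)$---and then argues via \cref{prop:n-equiv} that a weak equivalence $BF$ group-completes to a weak equivalence. This works, and has the conceptual virtue of treating $K$ as a black box, but it forces you to confront exactly the two issues you flag: that $BF$ is an $E_\infty$ map (requiring the detour through quasistrictification for general symmetric monoidal bicategories, since only permutative Gray-monoids are $\cO$-algebras on the nose), and that the isomorphism in \cref{thm:property-of-Kthy} is natural in strict functors. Both are true, but neither is free; the paper's level-equivalence observation sidesteps them entirely.
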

\begin{proof}
 The corresponding map of spectra $KF$ is a level equivalence.
\end{proof}

Restricting to permutative Gray-monoids, we obtain the main result in
\cite{GJO2017KTheory}.

\begin{thm}[\cite{GJO2017KTheory}]
  There is a functor $K\cn \PGM \to \Sp_{0}$ which induces an
  equivalence of homotopy theories
  \[
    (\PGM,\steq) \hty (\Sp_{0}, \steq)
  \]
  between permutative Gray-monoids and connective spectra, working
  relative to the stable equivalences.
\end{thm}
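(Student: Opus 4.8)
The functor is the restriction to $\PGM$ of the $K$-theory functor $K\cn \SMBicat_{s} \to \Sp_{0}$ of \cref{thm:property-of-Kthy}; since a permutative Gray-monoid lies in $\SMIICat_{s} \subset \SMBicat_{s}$, no new construction is needed. The plan is to verify that $K$ is an equivalence of the relative categories $(\PGM,\steq)$ and $(\Sp_{0},\steq)$ by the same Rezk-nerve criterion used in the proof of \cref{lem:PGM-to-SMB-equiv} (see \cite[Corollary 2.9]{GJO2017KTheory}): it is enough to show that $K$ preserves and reflects stable equivalences, is homotopically essentially surjective, and induces a weak equivalence on derived mapping spaces. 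Preservation and reflection are immediate, since by \cref{defn:st-eq-SMB} a strict functor $F$ of permutative Gray-monoids is a stable equivalence exactly when $KF$ is one.

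For essential surjectivity I would reduce to the classical $1$-categorical theorem of Thomason \cite{Tho95Symmetric}. A permutative category, regarded as a locally discrete $2$-category, is a permutative Gray-monoid: in the Gray tensor product the interchange cells $\Si_{f,g}$ are forced to the identity because $\oplus$ is a strict bifunctor, so the Gray-monoid structure is precisely the strict symmetric monoidal one. Under $K$ the associated spectrum agrees, up to natural stable equivalence, with the usual algebraic $K$-theory of the permutative category; the bridge is the identity $\pi_n K\cA \cong \pi_n \Om B(B\cA)$ of \cref{thm:property-of-Kthy}, which exhibits $K\cA$ as a delooping of the group-completion of the $E_\infty$-space $B\cA$ (the $E_\infty$-structure coming from the $\cO$-action of \cref{prop:PGM-is-OAlg} via \cref{defn:operadO}). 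Since Thomason's theorem says permutative categories already realize every connective spectrum, so does $\PGM$.

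The substantive step is the derived full faithfulness, and the plan is to identify $K$ with an established infinite loop space machine and invoke its uniqueness. The classifying-space functor carries $\cO$-algebras in $2$-categories to $B\cO$-algebras in $\Top$, and the group-completion $B\cA \to \Om B(B\cA)$ of \cref{thm:property-of-Kthy} together with May's recognition principle \cite{May1974Einfty} attaches to each $\cA$ a second, topologically constructed connective spectrum. The uniqueness theorem for infinite loop space machines of May and Thomason then supplies a natural chain of stable equivalences comparing $K\cA$ with this topological spectrum, and transporting the homotopy theory across this comparison reduces the derived mapping-space statement for $K$ to the corresponding, known statement for the topological machine on $E_\infty$-spaces.

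The hard part will be exactly this comparison: the derived mapping spaces of $(\PGM,\steq)$ are built from strict functors, while the weak equivalences are only detected after applying $K$ and group-completing, so one must show that passing to $K$-theory loses no homotopical information rather than merely giving a bijection on homotopy classes of maps. This is where the explicit construction of $K$ in \cite{GJO2017KTheory} and the machine-comparison input do the real work; the formal reductions above are comparatively routine.
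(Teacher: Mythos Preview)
The paper does not prove this theorem; it is stated with attribution to \cite{GJO2017KTheory} and no proof is given here.  So the relevant comparison is against what that reference actually does.

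Your route differs substantially from the argument of \cite{GJO2017KTheory}.  There the equivalence is obtained by constructing an explicit inverse: $K$ is factored through an intermediate category of $\Ga$-2-categories, a functor $P$ is built in the opposite direction, and one checks that the resulting unit and counit are natural stable equivalences.  Traces of this structure appear in the present paper: the inverse $P$ is invoked in \cref{lem:P-preserves-P2}, and the unit $\eta$ and its naturality (from \cite[Corollary 7.14]{GJO2017KTheory}) are used in the proof of \cref{prop:PGMst2eq-equiv-Ga2Catst2eq}.

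Your proposal instead tries to verify the Rezk-nerve criterion directly, with essential surjectivity via Thomason and derived full faithfulness via May--Thomason uniqueness.  The essential-surjectivity half is reasonable.  The full-faithfulness half has a genuine gap.  May--Thomason uniqueness compares two infinite loop space machines on the \emph{same} input category, producing a natural zigzag of stable equivalences between their output spectra; it does not, on its own, compare derived mapping spaces of $(\PGM,\steq)$ with those of $(\Sp_{0},\steq)$.  To go from ``$K\cA$ is naturally stably equivalent to the topological machine applied to $B\cA$'' to ``$K$ is derived fully faithful'' you would still need that the classifying-space functor $B\cn (\PGM,\steq) \to (E_\infty\text{-spaces},\steq)$ is itself an equivalence of homotopy theories --- but that is precisely the content you are trying to establish, merely shifted one category over.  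The approach in \cite{GJO2017KTheory} avoids this circularity by exhibiting the inverse $P$ explicitly rather than arguing indirectly.
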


\begin{prop}[{\cite[Remark 6.4]{GJO2017KTheory}}]
  When restricted to the subcategory $\PGM$, the functor $K$ of
  \cite{GO2012Infinite} is equivalent to that of \cite{GJO2017KTheory}.
\end{prop}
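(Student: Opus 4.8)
The plan is to reduce the comparison to the uniqueness theorem for infinite loop space machines of May and Thomason. The common input for both functors is the classifying space $B\cA$, which carries an $E_\infty$-structure arising from the Barratt-Eccles action of \cref{defn:operadO}, since a permutative Gray-monoid is an $\cO$-algebra by \cref{prop:PGM-is-OAlg}. Each functor sends this $E_\infty$-space to a connective spectrum whose zeroth space group-completes $B\cA$: for the functor of \cite{GJO2017KTheory} this is precisely the content of \cref{thm:property-of-Kthy}, while the functor of \cite{GO2012Infinite}, being an operadic machine applied to $B\cA$, satisfies the same property by the group-completion theorem of May \cite{May1974Einfty}.

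First I would exhibit both functors, restricted to $\PGM$, as infinite loop space machines operating on the single $E_\infty$-space $B\cA$. The functor of \cite{GO2012Infinite} is manifestly of this form. For the functor of \cite{GJO2017KTheory}, one recalls its explicit construction and checks that its underlying zeroth space, together with its multiplicative structure, recovers $B\cA$ as an $E_\infty$-space; this identification rests on \cref{prop:OAlg-gray-is-PGM} and \cref{prop:PGM-is-OAlg}, which show that the two descriptions of the algebraic data---as a permutative Gray-monoid and as an $\cO$-algebra---coincide. Having placed both machines on a common footing, I would then invoke the May-Thomason uniqueness theorem: two infinite loop space machines that each group-complete their input are connected by a natural zigzag of stable equivalences. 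Restricting this zigzag to $\PGM$ yields the asserted equivalence of the two $K$-theory functors.

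The hard part will be the second step, namely verifying that the $E_\infty$-space produced by the machine of \cite{GJO2017KTheory} agrees, naturally in $\cA$, with $B\cA$ equipped with its Barratt-Eccles structure, so that the hypotheses of the uniqueness theorem are satisfied on the nose rather than merely up to an abstract weak equivalence. Once this matching of input data is in place, the conclusion is immediate from the uniqueness theorem.
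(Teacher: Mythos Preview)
The paper does not supply its own proof of this proposition; it simply records the statement and cites Remark~6.4 of \cite{GJO2017KTheory} for the justification.  So there is no argument in the present paper to compare yours against.

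Your proposed route---showing that both constructions are infinite loop space machines on the common input $B\cA$ and then invoking the May--Thomason uniqueness theorem---is a standard and reasonable strategy.  You have correctly identified the nontrivial point: the machine of \cite{GJO2017KTheory} is Segal-style, passing through $\Gamma$-2-categories and then $\Gamma$-spaces, so one must check that the resulting special $\Gamma$-space has underlying space naturally equivalent to $B\cA$ with its Barratt--Eccles $E_\infty$ structure before May--Thomason applies.  That identification is not addressed in the present paper and is exactly where the work lies.  Whether the cited remark argues along these lines or instead gives a more direct comparison (for instance, an explicit zigzag at the level of $\Gamma$-spaces or spectra) is a matter for that reference; nothing further is said here.
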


\begin{defn}
  An operad $\sD$ in $\Top$ is an \emph{$E_\infty$ operad} if for all
  $n\geq 0$, the $\Si_n$-action on $\sD(n)$ is free, and $\sD(n)$ is
  contractible.
\end{defn}

The following theorem appeared first in \cite{May1974Einfty}.  A modern
(equivariant) version is in \cite{GM2017Iterated}.
\begin{thm}[{\cite[Theorem 2.3]{May1974Einfty}}, {\cite[Theorem 1.14, Definition 2.7]{GM2017Iterated}}]
  \label{thm:spectra-from-Einfty}
  Let $\sD$ be an $E_\infty$ operad in $\Top$. There is a functor
  \[
  \bE\cn  \sD\mh\mb{Alg} \to \Spectra
  \]
  such for a $\sD$-algebra $X$ and all $n\geq 2$,
  \[
  X=\bE(X)(0)\to \Om^n \bE(X)(n)
  \]
  is a group-completion.
\end{thm}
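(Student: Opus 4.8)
The plan is to construct $\bE$ by May's infinite loop space machine, first replacing the given $E_\infty$ operad by the little cubes operad and then building the deloopings with the two-sided bar construction. Let $\mathcal{C}_\infty = \operatorname{colim}_k \mathcal{C}_k$ denote the infinite little cubes operad, which is again $E_\infty$. Because $\sD$ is $E_\infty$, the two projections out of the product operad $\sD \times \mathcal{C}_\infty$ are maps of operads that are $\Si_n$-equivariant homotopy equivalences in each arity (each factor being $\Si_n$-free and contractible). Given a $\sD$-algebra $X$, restriction along the first projection makes $X$ an algebra over $\sD \times \mathcal{C}_\infty$, and the standard change-of-operad via the two-sided monadic bar construction along the second projection then produces a $\mathcal{C}_\infty$-algebra $X'$ together with a natural weak equivalence $X \simeq X'$; homotopy invariance of the bar construction over an $E_\infty$ operad (whose components are suitably cofibrant) guarantees the comparison maps are weak equivalences. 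As everything here is functorial in $X$, it suffices to build the machine for $\mathcal{C}_\infty$-algebras.

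Next I would construct the deloopings. Write $\mathbb{C}_k$ for the monad on $\Top$ associated to $\mathcal{C}_k$, and let $\Si^k$ denote the $k$-fold reduced suspension. The action of $\mathcal{C}_k$ on $k$-fold loop spaces gives, for any space $Y$, a natural map $\alpha_k \cn \mathbb{C}_k Y \to \Om^k \Si^k Y$. I would then set
\[
  \bE(X)(n) = \operatorname{Bar}(\Si^n, \mathbb{C}_\infty, X),
\]
the two-sided bar construction in which $\Si^n$ is regarded as a right $\mathbb{C}_\infty$-functor through the stabilization $\mathcal{C}_n \hookrightarrow \mathcal{C}_\infty$ and the maps $\alpha_n$. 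The natural suspension maps of the bar construction supply structure maps $\Si\,\bE(X)(n) \to \bE(X)(n+1)$, whose adjoints are the spectrum structure maps $\bE(X)(n) \to \Om\,\bE(X)(n+1)$. At level zero the augmentation $\operatorname{Bar}(\mathrm{Id}, \mathbb{C}_\infty, X) \to X$ is a natural homotopy equivalence, which I would use to identify $\bE(X)(0)$ with $X$. Functoriality of the bar construction makes $\bE$ a functor $\sD\mh\mb{Alg} \to \Spectra$.

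It then remains to verify that $X = \bE(X)(0) \to \Om^n \bE(X)(n)$ is a group-completion in the sense of \cref{defn:gpcompletion} for $n \ge 2$. The essential input is the approximation theorem: $\alpha_k$ is a weak equivalence whenever $Y$ is connected. Applying this degreewise inside the bar construction and commuting $\Om^n$ past geometric realization (using that the relevant simplicial spaces are proper, so realization preserves the needed weak equivalences) identifies $\Om^n \bE(X)(n)$ with the free grouplike infinite loop space on $X$. The $E_\infty$ structure makes $\pi_0 X$ a commutative monoid and shows $\pi_0 \Om^n \bE(X)(n)$ is its algebraic group-completion. For the homological condition, I would run the spectral sequence of the simplicial bar construction: its $E^1$-term computes $H_*(\mathbb{C}_\infty^{\bullet} X; k)$, and the approximation theorem together with the telescoping of the suspensions identifies the outcome with the localization $H_*(X; k)[\pi_0(X)^{-1}]$, which is exactly $H_*(\Om^n \bE(X)(n); k)$. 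The homological criterion of the group-completion theorem (Quillen, Barratt--Priddy; cf.\ the references following \cref{defn:gpcompletion}) then gives the result.

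The hardest part will be precisely this last step: proving the approximation theorem for connected inputs and then carrying out the homological analysis of the bar construction, so as to show that the comparison map realizes the Pontryagin-ring localization $H_*(X;k)[\pi_0(X)^{-1}]$. Controlling the bar-construction spectral sequence and justifying the interchange of iterated loops with realization is where all of the genuine difficulty lies; by contrast, the operadic reduction of the first paragraph is formal once one has homotopy invariance of the bar construction over $E_\infty$ operads.
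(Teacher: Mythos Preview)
The paper does not prove this theorem; it is quoted as a black-box result from \cite{May1974Einfty} (with the modern formulation in \cite{GM2017Iterated}) and used without further argument. Your sketch is a faithful outline of May's original construction: pass to $\mathcal{C}_\infty$-algebras via the product operad $\sD\times\mathcal{C}_\infty$ and a two-sided monadic bar construction, build the deloopings as bar constructions $B(\Si^n,\mathbb{C}_n,-)$, and then invoke the approximation theorem together with the homological analysis of the bar construction to obtain the group-completion statement.

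One small point of care: in May's setup the $n$th space is $B(\Si^n,\mathbb{C}_n,X)$ with the \emph{finite} little $n$-cubes monad, and the spectrum structure maps come from the compatibilities $\Si\circ\Si^n\cong\Si^{n+1}$ and $\mathbb{C}_n\to\mathbb{C}_{n+1}$; writing $\mathbb{C}_\infty$ uniformly as you do requires an additional (easy) comparison. Your assessment of where the difficulty lies is accurate: the approximation theorem for connected inputs and the spectral-sequence/localization argument identifying $H_*(\Om^n\bE(X)(n);k)$ with $H_*(X;k)[\pi_0(X)^{-1}]$ are the substantive steps, and they are precisely what the cited references supply.
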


\begin{defn}
  Let $\sD$ be an $E_\infty$ operad in $\Top$. A map $f\cn X \to Y$ of
  $\sD$-algebras is a \emph{stable equivalence} if the associated map
  $\bE(f)$ of spectra is so. Similarly, $f$ is said to be a \emph{stable
  $P_n$-equivalence} if $\bE(f)$ is so.
\end{defn}

We use \cref{thm:spectra-from-Einfty} to recognize stable equivalences
and stable $P_n$-equivalences of $\sD$-algebras by their induced maps
on group-completions, as in the following result.
\begin{cor}\label{lem:steq-Einfty-gp-cpn}
  A map $f\cn X \to Y$ of $\sD$-algebras is a stable equivalence if
  and only if the associated map on group-completions
 \[
 \Om B f \cn \Om B X \to \Om BY
 \]
 is an unstable equivalence.
 
 Similarly, $f$ is a stable $P_n$-equivalence if and only if
 \[
 \Om B f \cn \Om B X \to \Om BY
 \]
 is an unstable $P_n$-equivalence.
 \marginproof{lem:steq-Einfty-gp-cpn}{thm:spectra-from-Einfty}
\end{cor}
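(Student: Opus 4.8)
The plan is to identify the stable homotopy groups of the spectrum $\bE(X)$ with the ordinary homotopy groups of the group-completion $\Om B X$, naturally in the $\sD$-algebra $X$, and then to read off both equivalences from the definitions. The key input is \cref{thm:spectra-from-Einfty}: for every $\sD$-algebra $X$ and every $n \geq 2$ the map $X \to \Om^n \bE(X)(n)$ is a group-completion. A $\sD$-algebra is in particular a homotopy-commutative $H$-space, and group-completions are unique up to weak equivalence, so each $\Om^n \bE(X)(n)$ with $n \geq 2$ is a model for $\Om B X$. In particular, the adjoint structure maps $\Om^n\bE(X)(n) \to \Om^{n+1}\bE(X)(n+1)$ are maps between two group-completions of $X$ commuting with the group-completion maps from $X$, hence weak equivalences for $n \geq 2$.

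First I would use this stabilization to compute the stable homotopy groups directly. Since $\pi_{k+n}\bE(X)(n) = \pi_k\bigl(\Om^n\bE(X)(n)\bigr)$ and the transition maps are weak equivalences for $n \geq 2$, the defining colimit stabilizes at $n = 2$ and yields, for every $k \geq 0$,
\[
\pi_k \bE(X) = \operatorname*{colim}_n \pi_k\bigl(\Om^n \bE(X)(n)\bigr) \cong \pi_k(\Om B X);
\]
the same computation gives $\pi_k\bE(X) = 0$ for $k < 0$, so $\bE(X)$ is connective. This identification is natural in $X$ because the group-completions $X \to \Om^n\bE(X)(n)$ are natural in the $\sD$-algebra, so the square relating $f$, $\bE(f)$, and $\Om B f$ commutes on homotopy groups. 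Consequently $\pi_k\bE(f)$ is identified with $\pi_k(\Om B f)$ for every $k$.

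It then remains to pass between ``$\Om B f$ is an isomorphism on all homotopy groups'' and ``$\Om B f$ is a weak equivalence.'' Here I would invoke the fact already used in the proof of \cref{prop:n-equiv}, that in a group-complete homotopy-commutative $H$-space translation by any point is a homotopy equivalence, so all path components are homotopy equivalent. Thus an isomorphism on $\pi_0$ together with an isomorphism on $\pi_k$ at the unit basepoint for all $k \geq 1$ already forces an isomorphism on $\pi_k$ at every basepoint, i.e.\ a weak equivalence; the converse is immediate. Combining this with the previous paragraph, $f$ is a stable equivalence iff $\pi_k\bE(f)$ is an isomorphism for all $k$ iff $\pi_k(\Om B f)$ is an isomorphism for all $k \geq 0$ iff $\Om B f$ is an (unstable) weak equivalence. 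The statement for stable $P_n$-equivalences is obtained by restricting every occurrence of ``all $k$'' to ``$0 \leq k \leq n$,'' using the same reduction to the unit component to handle the ``all basepoints'' clause in \cref{defn:P_n-equiv}.

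The main obstacle I anticipate is bookkeeping rather than conceptual: one must verify that the weak equivalences $\Om^n \bE(X)(n) \simeq \Om B X$ supplied by uniqueness of group-completions can be taken compatibly with $f$, so that the identification $\pi_k\bE(f) \cong \pi_k(\Om B f)$ is genuinely natural and not merely an abstract isomorphism of groups. I expect this to follow from the naturality of the group-completion maps $X \to \Om^n\bE(X)(n)$ built into $\bE$, together with two-out-of-three applied to the comparison maps into $\Om B X$, but it is the point that requires care.
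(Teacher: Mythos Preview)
Your proposal is correct and is precisely the argument the paper intends: the statement is presented there as an immediate corollary of \cref{thm:spectra-from-Einfty} with no explicit proof, and your write-up simply unpacks that dependence by identifying $\pi_k\bE(X)$ with $\pi_k(\Om B X)$ via the stabilization of group-completions and then reading off the two claims. The only addition you make beyond what the paper leaves implicit is the basepoint bookkeeping via translation in group-complete $H$-spaces, which is standard and already used elsewhere in the paper.
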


Applying \cref{thm:property-of-Kthy}, we can recognize stable
$P_n$-equivalences of symmetric monoidal bicategories in the same way.
\begin{cor}\label{prop:P_n-equiv-gp-cpn}
  \marginproof{prop:P_n-equiv-gp-cpn}{thm:property-of-Kthy} 
  A strict functor $F \cn \cA \to \cB$ of symmetric monoidal bicategories is a
  stable $P_n$-equivalence if and only if the associated map on
  topological group-completions
  \[
  \Om B (BF)\cn \Om B(B\cA) \to \Om B(B\cB)
  \]
  is an unstable $P_n$-equivalence.
\end{cor}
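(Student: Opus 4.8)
The plan is to unwind the definition of stable $P_n$-equivalence through the $K$-theory functor and then translate the resulting condition on stable homotopy groups into the asserted condition on the homotopy groups of the topological group-completion, using the comparison supplied by \cref{thm:property-of-Kthy}.

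By \cref{defn:st-eq-SMB}, the strict functor $F$ is a stable $P_n$-equivalence precisely when $KF \cn K\cA \to K\cB$ induces an isomorphism on stable homotopy groups $\pi_i$ for $0 \le i \le n$. First I would record that, since $K\cA$ is a positive $\Om$-spectrum, its stable homotopy groups are computed as $\pi_i(K\cA) \cong \pi_i(\Om K\cA(1))$, and the group-completion map $B\cA \to \Om K\cA(1)$ identifies $\Om K\cA(1)$ with the topological group-completion $\Om B(B\cA)$ up to weak equivalence. This is exactly the isomorphism $\pi_i(K\cA) \cong \pi_i(\Om B(B\cA))$ of \cref{thm:property-of-Kthy}, and likewise for $\cB$. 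Unwinding the definition of stable $P_n$-equivalence of spectra then reduces the claim to comparing these homotopy groups degreewise.

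Next I would check that these identifications are natural in $F$. Functoriality of $K$ yields a commuting square relating the group-completion maps for $\cA$ and $\cB$ to $BF$ and to $\Om KF(1)$; by the weak-equivalence uniqueness of group-completions, the induced map $\Om K\cA(1) \to \Om K\cB(1)$ agrees with $\Om B(BF)$ under the equivalences above. Consequently $\pi_i(KF)$ is an isomorphism if and only if $\pi_i(\Om B(BF))$ is. Finally, since $\Om B(B\cA)$ and $\Om B(B\cB)$ are group-complete $H$-spaces, translation makes all path components homotopy equivalent, so the single-basepoint isomorphisms $\pi_i(\Om B(BF))$ are equivalent to the basepoint-independent condition in \cref{defn:P_n-equiv}. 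Thus $\Om B(BF)$ is an unstable $P_n$-equivalence if and only if $F$ is a stable $P_n$-equivalence, giving both directions at once.

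The hard part will be the naturality step: \cref{thm:property-of-Kthy} phrases the comparison isomorphism object-wise, whereas here I need it compatible with the map induced by $F$. I expect this to follow cleanly from functoriality of $K$ together with the uniqueness of group-completions, but it is the one place where the argument uses more than a formal rewriting of definitions.
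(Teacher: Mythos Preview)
Your proposal is correct and matches the paper's approach: the paper states this as an immediate corollary of \cref{thm:property-of-Kthy} with no written proof, simply remarking that one can ``recognize stable $P_n$-equivalences of symmetric monoidal bicategories in the same way'' as for $\sD$-algebras in \cref{lem:steq-Einfty-gp-cpn}. Your argument spells out exactly the details the paper leaves implicit, including the naturality and basepoint points, so you have supplied more than the paper does.
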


\begin{prop}\label{lem:WS-preserves-st2eq}
  Let $\sD$ be an $E_\infty$ operad, and let $\chi$ denote any choice
  of multiplication for $WS(\sD)$.  If $\al \colon X\to Y$ is a map of
  $\sD$-algebras which is a stable equivalence, then $\chi^* WS \al
  \colon \chi^*WS X \to \chi^*WS Y$ is a stable $P_2$-equivalence in
  $\SMIICat_{s}$.
\end{prop}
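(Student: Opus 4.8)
The plan is to reduce the claim to a comparison of topological group-completions and then feed it into the homotopical machinery of \cref{sec:topological-background}. First I would record that $WS\sD$ is an operad in $\bC(\leq\infty)$: each $\sD(n)$ is contractible, so $S\sD(n)$ is weakly contractible, and $W$ sends weak equivalences to biequivalences by \cref{cor:eta_2equiv}, whence $WS\sD(n)$ is bicategorically contractible. In particular $WS\sD\in\bC(\leq 5)$, so the choice of multiplication $\chi$ determines a functor $\chi^*$ as in \cref{thm:choice-of-mult-to-SMB}; since $WS$ is symmetric monoidal for the cartesian product, $WSX$ and $WSY$ are $WS\sD$-algebras whose underlying $2$-categories are strict $2$-groupoids (\cref{lem:operad_monoidal}), and $WS\al$ is a strict functor, so $\chi^*WS\al$ is a morphism in $\SMIICat_{s}$. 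By \cref{defn:st-eq-SMB} and \cref{prop:P_n-equiv-gp-cpn}, it then suffices to prove that the induced map on topological group-completions is a $P_2$-equivalence. Because $\chi^*$ is the identity on underlying $2$-categories and pseudofunctors, this map is exactly $\Om B(BWS\al)\cn \Om B(BWSX)\to\Om B(BWSY)$.

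The heart of the argument is a natural comparison, in the homotopy category of $E_\infty$-spaces, between the $\sD$-algebra $X$ and the $E_\infty$-space $BWSX$. I would use the zigzag
\[
X \xleftarrow{\;\varepsilon_X\;} |SX| \xrightarrow{\;|\eta_{SX}|\;} |NWSX| = BWSX ,
\]
where $\varepsilon$ is the counit of $|-|\dashv S$ and $\eta$ is the unit of $W\dashv N$. The functors $S$, $|-|$, $N$, and $W$ are all strong symmetric monoidal for the cartesian product (\cref{prop:classical-monoidal-functors}, \cref{cor:eta_2equiv}) and $\varepsilon$, $\eta$ are monoidal, so after restricting along the resulting operad maps each leg becomes an honest map of algebras over a fixed $E_\infty$ operad, i.e.\ an $E_\infty$ map; moreover the whole zigzag is natural in $X$. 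The left leg $\varepsilon_X$ is a weak equivalence, while $|\eta_{SX}|$ is a $P_2$-equivalence since $\eta_{SX}$ is one by \cref{cor:eta_2equiv} and $|-|$ preserves $P_2$-equivalences. Crucially, since $WSX$ is a strict $2$-groupoid, $BWSX$ is a homotopy $2$-type \cite{MS93Algebraic}, hence $3$-coconnected.

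Now I would apply the group-completion results. By \cref{cor:n-equiv-alt} with $n=2$, applied to the right leg (whose target is $3$-coconnected), $\Om B(|\eta_{SX}|)$ is a $P_2$-equivalence, and $\Om B(\varepsilon_X)$ is a weak equivalence; hence there is a natural $P_2$-equivalence $\Om B X\to\Om B(BWSX)$, and likewise for $Y$. Since $\al$ is a stable equivalence, $\Om B\al\cn\Om B X\to\Om B Y$ is an unstable equivalence by \cref{lem:steq-Einfty-gp-cpn}. Naturality of the comparison produces a commuting square relating $\Om B\al$ and $\Om B(BWS\al)$ whose two horizontal maps and left vertical map are $P_2$-equivalences, so two-out-of-three for $P_2$-equivalences forces $\Om B(BWS\al)$ to be a $P_2$-equivalence, which is exactly what \cref{prop:P_n-equiv-gp-cpn} requires.

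The hard part will be the second paragraph: upgrading the underlying weak comparison $X\to BWSX$ to a genuine \emph{natural $E_\infty$ map}. This means carefully transporting the operad actions through $S$, $|-|$, $W$, and $N$, checking that $\varepsilon$ and $\eta$ are maps of algebras over the appropriate (weakly equivalent) $E_\infty$ operads, and confirming that the group-completions computed from these different-but-equivalent operad structures agree. The only other delicate point is the coconnectivity input---verifying that $BWS(-)$ really lands in homotopy $2$-types---so that the hypothesis of \cref{cor:n-equiv-alt} is genuinely available; everything after that is formal manipulation of $P_2$-equivalences.
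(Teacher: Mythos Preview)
Your proposal is correct and follows essentially the same route as the paper. The paper uses precisely your zigzag $X \leftarrow |SX| \to BWSX$ (viewed as a diagram of algebras over $|S\sD|$ via the monoidal counit $|S(-)|\Rightarrow\id$ and the monoidal unit $|S(-)|\Rightarrow BWS(-)$), invokes the $3$-coconnectivity of $BWSX$ together with \cref{cor:n-equiv-alt} and \cref{lem:steq-Einfty-gp-cpn}, and finishes by $2$-out-of-$3$; the only cosmetic difference is that the paper phrases the argument at the level of stable $P_2$-equivalences of $E_\infty$ maps rather than explicitly passing to $\Om B(-)$ throughout, and it treats the ``hard part'' you flag (making the zigzag one of $E_\infty$ maps over a single operad) as routine.
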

\begin{proof}\proofof{lem:WS-preserves-st2eq}
  By \cref{prop:P_n-equiv-gp-cpn} and \cref{lem:steq-Einfty-gp-cpn},
  it suffices to show that $BWS\al$ is a stable $P_2$-equivalence.
  Consider the following diagram of algebras over $|S\sD|$,
  induced by naturality of the counit
  \[
  |S(-)| \Rightarrow \id
  \]
  and the transformation
  \[
  |S(-)| \Rightarrow BWS(-)
  \]
  induced by the unit of $W \dashv N$.

  \[
  \begin{tikzpicture}[x=30mm,y=15mm]
    \draw[tikzob,mm] 
    (0,2) node (X) {X}
    (1,2) node (Y) {Y}
    (0,1) node (SX) {|SX|}
    (1,1) node (SY) {|SY|}
    (0,0) node (BWSX) {BWSX}
    (1,0) node (BWSY) {BWSY}
    ;
    \path[tikzar,mm] 
    (X) edge node {\al} (Y)
    (SX) edge node {|S\al|} (SY)
    (BWSX) edge node {BWS\al} (BWSY)
    (SX) edge node {} (X)
    (SX) edge node {} (BWSX)
    (SY) edge node {} (Y)
    (SY) edge node {} (BWSY)
    ;
  \end{tikzpicture}
  \]
  The upper vertical arrows are unstable weak equivalences, therefore
  stable equivalences.  The lower vertical arrows are unstable
  $P_2$-equivalences by \cref{cor:eta_2equiv}~(\ref{it:eta}).  Since
  $W$ takes values in 2-groupoids, $BWSX$ and $BWSY$ are
  3-coconnected.  Therefore by \cref{cor:n-equiv-alt} and
  \cref{lem:steq-Einfty-gp-cpn} the lower vertical morphisms are stable
  $P_2$-equivalences. The assumption that $\al$ is a stable
  equivalence means that $|S\al|$ must be too, and hence both are
  stable $P_2$-equivalences.  The result then follows by 2-out-of-3
  for stable $P_2$-equivalences.
\end{proof}

\subsection{Group-completion for \texorpdfstring{$E_\infty$}{E\_infty} algebras} 
\label{sec:top-gp-cpn} 

In this section we recall the theory of group-completions of $E_\infty$ algebras
in $\Top$ and discuss its implications for the symmetric monoidal
2-groupoids studied above.  Let $\sD$ be an arbitrary $E_\infty$
operad in $\Top$. 

\begin{notn}\label{notn:Cinfty-Dinfty}
  Let $\sC_n$ be the little $n$-cubes operad, and let $\sC_\infty$ be
  the colimit (the maps are given by inclusions of $\sC_n$ into
  $\sC_{n+1}$). This is an $E_\infty$ operad (see \cite[Section
  4]{may72geo}). Let $\sD_\infty = \sD \times \sC_\infty$ and let
  $p_1$ and $p_2$ denote the two projections.
\end{notn}
 
\begin{thm}[{\cite[Theorem 2.3]{May1974Einfty}}]\label{gpcomp}
  If $X$ is a $\sD$-algebra, then there is an algebra $qX$ over
  $\sD_\infty$ and a $\sC_\infty$-algebra $LX$, together with
  $\sD_\infty$-algebra maps
  \[
  \begin{tikzpicture}[x=15mm,y=20mm]
    \draw[tikzob,mm] 
    (-1,0) node (L) {p_1^* X}
    (0,0) node (M) {qX}
    (1,0) node (R) {p_2^* LX}    
    ;
    \path[tikzar,mm] 
    (M) edge[swap] node {\xi} (L)
    (M) edge node {\al} (R)
    ;
  \end{tikzpicture}
  \]
  such that $\xi$ is a homotopy equivalence and $\al$ is a
  group-completion. The assignments $X \mapsto qX$ and $X \mapsto LX$
  are functorial, and $\xi$ and $\al$ are natural.
\end{thm}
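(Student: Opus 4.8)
The plan is to construct both $qX$ and $LX$ from May's two-sided monadic bar construction, using that $\sD_\infty = \sD \times \sC_\infty$ is again an $E_\infty$ operad and that the projection $p_2 \cn \sD_\infty \to \sC_\infty$ is an equivalence of operads. Write $\mathbb{D}$, $\mathbb{D}_\infty$, and $\mathbb{C}_\infty$ for the monads on $\Top$ associated to $\sD$, $\sD_\infty$, and $\sC_\infty$, and let $\mathbb{B}(-,-,-)$ denote the two-sided bar construction (not the classifying space $B$). The projections $p_1$ and $p_2$ induce monad maps $\mathbb{D}_\infty \to \mathbb{D}$ and $\mathbb{D}_\infty \to \mathbb{C}_\infty$. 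The first lets me regard the $\sD$-algebra $X$ as a $\sD_\infty$-algebra $p_1^* X$; the second equips $\mathbb{C}_\infty$ with a right $\mathbb{D}_\infty$-action, so that $\mathbb{B}(\mathbb{C}_\infty, \mathbb{D}_\infty, -)$ is defined. Since each $\sD(n)$ is contractible, $p_2$ is a $\Si_n$-equivariant homotopy equivalence on each level, hence an equivalence of $E_\infty$ operads.

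First I would set $qX = \mathbb{B}(\mathbb{D}_\infty, \mathbb{D}_\infty, p_1^* X)$. Because each operadic monad commutes with geometric realization of simplicial spaces, this is again a $\sD_\infty$-algebra, and its augmentation gives $\xi \cn qX \to p_1^* X$; the standard extra-degeneracy argument exhibits $\xi$ as a simplicial homotopy equivalence, hence a homotopy equivalence after realization. To obtain the $\sC_\infty$-structure required for delooping I pass to the honest $\sC_\infty$-algebra $Y = \mathbb{B}(\mathbb{C}_\infty, \mathbb{D}_\infty, p_1^* X)$. The change-of-operad map $\mathbb{B}(p_2, \id, \id) \cn qX \to p_2^* Y$ is a map of $\sD_\infty$-algebras, and because $p_2$ is a levelwise equivalence, this map is a degreewise equivalence of these (proper) simplicial spaces and hence a homotopy equivalence. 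I then deloop $Y$ by May's recognition machine, forming the connective spectrum whose $m$th space is $\mathbb{B}(\Si^m, \mathbb{C}_m, Y)$ --- using the action of the little $m$-cubes on the suspension coordinates --- and setting $LX = \operatorname{colim}_m \Om^m \mathbb{B}(\Si^m, \mathbb{C}_m, Y)$, a grouplike $\sC_\infty$-algebra. The recognition machine supplies a group-completion $Y \to LX$ of $\sC_\infty$-algebras; pulling back along $p_2$ and precomposing with the equivalence above defines the $\sD_\infty$-algebra map $\al \cn qX \to p_2^* LX$.

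It remains to verify that $\al$ is a group-completion in the sense of \cref{defn:gpcompletion}, and this is the step I expect to be the main obstacle. Since $qX \to p_2^* Y$ is a homotopy equivalence, it suffices to treat $Y \to LX$. On $\pi_0$ this is formal: $\pi_0 LX$ is the Grothendieck group of the commutative monoid $\pi_0 Y \cong \pi_0 X$, and the map is the algebraic group-completion. The homological statement, that $H_*(Y;k)[\pi_0^{-1}] \to H_*(LX;k)$ is an isomorphism for every field $k$, is the genuine content of the group-completion theorem: one analyses the homology of the little-cubes filtration computing $LX$, invoking the approximation of $\mathbb{C}_m Y$ by $\Om^m \Si^m Y$ and Quillen's localization argument \cite{Quillen71GpCompletion, BP1972Homology}, where exactness of localization at $\pi_0$ is precisely what matches the inverted monoid on the source. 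Finally, functoriality of $qX$ and $LX$ and naturality of $\xi$ and $\al$ follow formally from functoriality of the bar construction in its algebra variable together with naturality of the recognition machine.
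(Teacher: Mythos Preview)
The paper does not prove this theorem at all: it is stated as a citation of \cite[Theorem 2.3]{May1974Einfty}, followed only by the remark that $q$ and $L$ are constructed explicitly there and that the homotopy inverse of $\xi$ is explicit but not a $\sD_\infty$-algebra map. So there is no ``paper's own proof'' to compare against beyond the attribution.

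Your sketch is a reasonable and essentially faithful reconstruction of May's original argument. Setting $qX = \mathbb{B}(\mathbb{D}_\infty, \mathbb{D}_\infty, p_1^* X)$ with $\xi$ the bar augmentation, passing to the $\sC_\infty$-algebra $Y = \mathbb{B}(\mathbb{C}_\infty, \mathbb{D}_\infty, p_1^* X)$ via the levelwise equivalence $p_2$, and then feeding $Y$ into the recognition machine to produce $LX$ as the zeroth space of the resulting $\Om$-spectrum is exactly the shape of May's construction; you have also correctly identified the group-completion theorem as the substantive analytic input for the homological condition on $\al$. One small point worth flagging, consistent with the paper's remark: the extra-degeneracy homotopy inverse to $\xi$ is not a map of $\sD_\infty$-algebras, so $\xi$ is only a homotopy equivalence of underlying spaces, not an equivalence in $\sD_\infty$-algebras with structured inverse. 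This does not affect the theorem as stated but is the reason the zigzag through $qX$ is needed in the first place.
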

\begin{rmk}
The functors $q$ and $L$ are constructed explicitly in
\cite{May1974Einfty}. The homotopy inverse of $\xi$ is also very
explicit, but it is not a
  $\sD_\infty$-algebra map. 
\end{rmk}

Note that \cref{lem:steq-Einfty-gp-cpn} implies that both $\xi$ and
$\al$ above are stable equivalences of $\sD_\infty$-algebras.  We now
specialize to the $E_\infty$ operad $B\cO$, and we let
$B\cO_\infty=B\cO\times\sC_\infty$. Since the functors $W$ and $S$ are
strong symmetric monoidal we obtain the following result.

\begin{lem}\label{cor:operads-WSC-WSD}
There are operads
$WS\sC_\infty$ and
$WSB\cO_\infty$ in $\iicat$, together with projections
\[
p_1\cn WSB\cO_\infty \to WSB\cO
\]
and
\[
p_2\cn WSB\cO_\infty \to WS\sC_\infty.
\]
\end{lem}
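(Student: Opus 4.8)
The plan is to recognize this lemma as a direct application of \cref{lem:operad_monoidal} to the composite functor $WS$, so the main task is to identify $WS$ as a strong symmetric monoidal functor $\Top \to \iicat$. First I would record that $S \cn \Top \to \sset$ is strong symmetric monoidal by \cref{prop:classical-monoidal-functors}, and that $W \cn \sset \to \siigpd \hookrightarrow \iicat$ is strong monoidal with respect to the cartesian product by \cref{cor:eta_2equiv}~(\ref{it:W-monoidal}). Since both the source and target of $W$ are cartesian monoidal and $W$ preserves products, its monoidal constraint is the canonical comparison map, which is automatically symmetric; the inclusion $\siigpd \hookrightarrow \iicat$ likewise preserves products. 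Thus $W$, and hence the composite $WS$, is strong symmetric monoidal.

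Next I would observe that $B\cO$ and $\sC_\infty$ are operads in $\Top$---the first by \cref{defn:operadO} and the second by \cref{notn:Cinfty-Dinfty}---so that their product $B\cO_\infty = B\cO \times \sC_\infty$ is again an operad in $\Top$. Applying \cref{lem:operad_monoidal}~(1) to the strong symmetric monoidal functor $WS$ then produces operads $WS\sC_\infty$ and $WSB\cO_\infty$ in $\iicat$, each with underlying sequence obtained by applying $WS$ termwise.

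Finally, to obtain the projections I would use that the two product projections $B\cO_\infty \to B\cO$ and $B\cO_\infty \to \sC_\infty$ are maps of operads in $\Top$. A strong symmetric monoidal functor carries maps of operads to maps of operads, since the operad structure on an image $WS P$ is assembled from the images of the structure maps of $P$ together with the monoidal constraints of $WS$, and a map of operads commutes with precisely these data; applying $WS$ therefore yields operad maps $p_1 \cn WSB\cO_\infty \to WSB\cO$ and $p_2 \cn WSB\cO_\infty \to WS\sC_\infty$. I expect no genuine obstacle here: the only points needing a word of justification are that $WS$ is symmetric and not merely monoidal (immediate from the cartesian structure) and that symmetric monoidal functors preserve operad morphisms (the evident extension of \cref{lem:operad_monoidal}~(1) to morphisms), both of which are routine, and the lemma follows formally.
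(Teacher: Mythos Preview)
Your proposal is correct and matches the paper's approach exactly: the paper records no explicit proof, only the references to \cref{cor:eta_2equiv} and \cref{prop:classical-monoidal-functors}, which is precisely what you invoke to exhibit $WS$ as strong symmetric monoidal and then apply \cref{lem:operad_monoidal}. Your additional remarks on symmetry and on preservation of operad maps are the routine unpacking the paper leaves implicit.
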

\marginproof{cor:operads-WSC-WSD}{cor:eta_2equiv,prop:classical-monoidal-functors}

\begin{cor}\label{cor:WSD-zigzag}
Given a permutative Gray-monoid $\cA$, there exists a natural zigzag
of maps
  of $WSB\cO_\infty$-algebras in $(\iicat,\times)$ 
  \[
  \begin{tikzpicture}[x=30mm,y=20mm]
    \draw[tikzob,mm] 
    (0,0) node (wsba') {p_1^* WS(B\cA)}
    ++(1,.8) node (wsqba) {WS(qB\cA) }
    ++(1,-.8) node (wslba) {p_2^* WS(LB\cA).}
    ;
    \path[tikzar,mm] 
    (wsqba) edge[swap] node {WS(\xi_{B\cA})} (wsba')
    (wsqba) edge node {WS(\al_{B\cA})} (wslba)
    ;
  \end{tikzpicture}
  \]
  The left arrow is a biequivalence.
\end{cor}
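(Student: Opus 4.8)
The plan is to apply May's group-completion theorem to the classifying space $B\cA$ and then transport the resulting zigzag across the strong symmetric monoidal functor $WS$.

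First I would recall that $\cA$ is an $\cO$-algebra by \cref{prop:PGM-is-OAlg}, so by \cref{defn:operadO} its classifying space $B\cA$ is an algebra over the $E_\infty$ operad $B\cO$. Taking $\sD = B\cO$ in \cref{gpcomp} produces a $B\cO_\infty$-algebra $qB\cA$ together with the zigzag of $B\cO_\infty$-algebra maps
\[
p_1^* B\cA \xleftarrow{\xi_{B\cA}} qB\cA \xrightarrow{\al_{B\cA}} p_2^* LB\cA,
\]
in which $\xi_{B\cA}$ is a homotopy equivalence and $\al_{B\cA}$ is a group-completion; moreover $\xi$ and $\al$ are natural in the input algebra.

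Next I would push this zigzag through $WS$. Since $S$ is strong symmetric monoidal (\cref{prop:classical-monoidal-functors}) and $W$ is strong monoidal for the cartesian product (\cref{cor:eta_2equiv}~(\ref{it:W-monoidal})), the composite $WS$ is strong symmetric monoidal from $(\Top, \times)$ to $(\iicat, \times)$. By \cref{lem:operad_monoidal} it therefore carries the operad $B\cO_\infty$ to the operad $WSB\cO_\infty$ of \cref{cor:operads-WSC-WSD} and induces a functor on algebras, so applying it termwise yields a zigzag of $WSB\cO_\infty$-algebra maps
\[
WS(p_1^* B\cA) \xleftarrow{WS(\xi_{B\cA})} WS(qB\cA) \xrightarrow{WS(\al_{B\cA})} WS(p_2^* LB\cA).
\]

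The one point requiring care — and the step I expect to be the main, if essentially formal, obstacle — is the identification of the two outer terms with $p_1^* WS(B\cA)$ and $p_2^* WS(LB\cA)$. This amounts to checking that restriction of scalars along an operad map commutes with applying the symmetric monoidal functor $WS$. Concretely, the projections $p_1\cn B\cO_\infty \to B\cO$ and $p_2\cn B\cO_\infty \to \sC_\infty$ are sent by $WS$ to the projections of \cref{cor:operads-WSC-WSD}, and the explicit formula for the induced action in \cref{lem:operad_monoidal}, together with naturality of the monoidal constraints of $WS$, shows that the $WSB\cO_\infty$-action on $WS(p_i^*(-))$ is exactly the one obtained by restricting the $WSB\cO$- respectively $WS\sC_\infty$-action on $WS(-)$. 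Granting this, the displayed zigzag is the asserted one. Finally, the left leg is a biequivalence: $\xi_{B\cA}$ is a homotopy equivalence of spaces, so $S\xi_{B\cA}$ is a weak equivalence of simplicial sets, and $W$ sends weak equivalences to biequivalences by \cref{cor:eta_2equiv}~(\ref{it:W-preserves}). Naturality of the entire zigzag in $\cA$ then follows from the naturality of $\xi$ and $\al$ in \cref{gpcomp}, the functoriality of $B(-)$, and the functoriality of $WS$ and of the restriction functors $p_i^*$.
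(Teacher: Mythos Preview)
Your proposal is correct and follows essentially the same route as the paper: the paper's proof simply notes that $B$, $S$, and $W$ are strong monoidal (so the zigzag of \cref{gpcomp} transports to one of $WSB\cO_\infty$-algebras) and that $S$ sends homotopy equivalences to weak equivalences while $W$ sends weak equivalences to biequivalences. Your write-up is more explicit than the paper's, particularly in spelling out the compatibility of $WS$ with the restriction functors $p_i^*$, but there is no genuine difference in strategy.
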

\begin{proof}\proofof{cor:WSD-zigzag}\marginref{cor:operads-WSC-WSD}
  By
  \cref{prop:classical-monoidal-functors,cor:eta_2equiv}~(\ref{it:W-monoidal}),
  $B$, $S$ and $W$ are strong monoidal.  For the biequivalence part, $S$
  sends homotopy equivalences to weak equivalences, and $W$ sends weak
  equivalences to biequivalences (\cref{cor:eta_2equiv}~(\ref{it:W-preserves})).
\end{proof}

\begin{notn}\label{prop:WSCinfty-mult-choice}
  Because $WS\sC_\infty$ is in $\bC(\leq\infty)$,
  \cref{prop:C4-choice-of-mult} guarantees that it has a choice of
  multiplication.  For the rest of this paper, let $\nu$ denote a
  fixed such choice.
  For example, the content of \cite[\S 2.2]{GO2012Infinite} provides one
  such choice.
\end{notn}

\begin{notn}\label{WSDinfty-mult-choice}
  Let $\CAN$ denote the choice of multiplication in $WSB\cO_\infty$
  given by the product of $h(\wncan)$
 (\cref{notn:maps-to-WSBO,defn:wt-can-choice}) and $\nu$.
We  call this the canonical choice of multiplication for $WSB\cO_\infty$.
\marginproof{WSDinfty-mult-choice}{prop:choice-from-product,rmk:product-choice-projections}  
\end{notn}

\begin{prop}\label{prop:WSLBA-picard}
  Given a permutative Gray-monoid $\cA$, there is a natural zigzag of
  symmetric monoidal 2-categories and strict monoidal 2-functors
  \[
  \begin{tikzpicture}[x=30mm,y=20mm]
    \draw[tikzob,mm] 
    (0,0) node (wsba) {\wncan^* h^* WS(B\cA)}
    ++(.94,0) node (wsba') {\CAN^* p_1^* WS(B\cA)}
    ++(1,.8) node (wsqba) {\CAN^* WS(qB\cA) }
    ++(1,-.8) node (wslba) {\CAN^* p_2^* WS(LB\cA)}
    ++(1,0) node (wslba') {\nu^* WS(LB\cA).}
    ;
    \path[tikzar,mm] 
    (wsqba) edge[swap] node {\CAN^*(WS(\xi_{B\cA}))} (wsba')
    (wsqba) edge node {\CAN^*(WS(\al_{B\cA}))} (wslba)
    ;
    \node[mm] at ($ (wslba) !.55! (wslba') $) {=};
    \node[mm] at ($ (wsba) !.50! (wsba') $) {=};
  \end{tikzpicture}
  \]
  Moreover, $\nu^* WS(LB\cA)$ is a Picard 2-category, $\xi$ is a
  biequivalence and $\al$ is a stable $P_2$-equivalence.
\end{prop}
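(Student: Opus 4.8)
The plan is to obtain the asserted zigzag simply by applying the functor $\CAN^*$ to the zigzag of $WSB\cO_\infty$-algebras produced in \cref{cor:WSD-zigzag}, and then to identify the two endpoints with the claimed expressions and verify the three remaining properties. First I would record that all the relevant operads, namely $WN\cO$, $WSB\cO$, $WS\sC_\infty$, and $WSB\cO_\infty$, lie in $\bC(\leq\infty) \subseteq \bC(\leq 5)$: each arises as $WS$ (or $WN$) applied to an $E_\infty$ operad, so each $n$th term is $WS$ (or $WN$) of a contractible space, and \cref{cor:eta_2equiv}~(\ref{it:W-preserves}) carries the weak equivalence to a point to a biequivalence to a point. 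Consequently \cref{thm:choice-of-mult-to-SMB} makes $\CAN^*$ a well-defined functor which is the identity on underlying 2-categories and 2-functors; applying it to the span of \cref{cor:WSD-zigzag} yields a span of symmetric monoidal 2-categories and strict monoidal 2-functors with legs $\CAN^*(WS(\xi_{B\cA}))$ and $\CAN^*(WS(\al_{B\cA}))$, natural in $\cA$ by naturality of \cref{cor:WSD-zigzag} and functoriality of $\CAN^*$.

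Next I would establish the two endpoint equalities using compatibility of choices of multiplication with operad maps. By \cref{WSDinfty-mult-choice} the choice $\CAN$ is the product of $h(\wncan)$ and $\nu$, so by \cref{rmk:product-choice-projections} the projections satisfy $p_1(\CAN) = h(\wncan)$ and $p_2(\CAN) = \nu$. Applying \cref{prop:chistar-fstar-equals-chifstar} to $p_1$ and to $p_2$ gives the equalities of functors $\CAN^* p_1^* = (h(\wncan))^*$ and $\CAN^* p_2^* = \nu^*$. Applying \cref{prop:chistar-fstar-equals-chifstar} once more to the operad map $h\cn WN\cO \to WSB\cO$ of \cref{cor:operads-WNO-WSBO} identifies $(h(\wncan))^* = \wncan^* h^*$. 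Evaluating these at the $WSB\cO$-algebra $WS(B\cA)$ and at the $WS\sC_\infty$-algebra $WS(LB\cA)$ respectively yields exactly the two equalities $\CAN^* p_1^* WS(B\cA) = \wncan^* h^* WS(B\cA)$ and $\CAN^* p_2^* WS(LB\cA) = \nu^* WS(LB\cA)$ appearing in the statement.

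It then remains to verify the three properties. That $\xi$ is a biequivalence is immediate: $\xi_{B\cA}$ is a homotopy equivalence by \cref{gpcomp}, hence $WS(\xi_{B\cA})$ is a biequivalence (as $S$ sends homotopy equivalences to weak equivalences and $W$ sends weak equivalences to biequivalences by \cref{cor:eta_2equiv}~(\ref{it:W-preserves})), and $\CAN^*$ leaves the underlying 2-functor unchanged. For $\al$, the map $\al_{B\cA}\cn qB\cA \to p_2^* LB\cA$ is a group-completion, hence a stable equivalence of $B\cO_\infty$-algebras by \cref{lem:steq-Einfty-gp-cpn}; applying \cref{lem:WS-preserves-st2eq} with the $E_\infty$ operad $\sD = B\cO_\infty$ and the choice $\chi = \CAN$ shows $\CAN^*(WS(\al_{B\cA}))$ is a stable $P_2$-equivalence in $\SMIICat_{s}$. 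Finally, for the Picard claim, $WS(LB\cA)$ is a strict 2-groupoid by construction of $W$, so all its 1- and 2-cells are invertible; its objects are the points of $LB\cA$, and two objects are joined by an (automatically invertible) 1-cell precisely when they lie in the same path component, so isomorphism classes of objects form $\pi_0(LB\cA)$. Since $\al_{B\cA}$ is a group-completion, $\pi_0(LB\cA)$ is a group, and the monoidal product of $\nu^* WS(LB\cA)$ induces this group structure on isomorphism classes; hence every object has an inverse up to equivalence and $\nu^* WS(LB\cA)$ is Picard.

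The assembly above is essentially formal once the supporting lemmas are in hand, so I expect the only genuinely delicate points to be bookkeeping: matching the choices of multiplication through the operad maps $h$, $p_1$, and $p_2$ so that the endpoints of the zigzag are \emph{literally} the displayed expressions rather than merely equivalent to them, and tracing the group-completion hypothesis on $LB\cA$ down to invertibility of objects at the level of $\pi_0$ for the Picard conclusion.
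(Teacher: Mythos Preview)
Your proposal is correct and follows essentially the same approach as the paper: apply $\CAN^*$ to the zigzag of \cref{cor:WSD-zigzag}, identify the endpoints via \cref{prop:chistar-fstar-equals-chifstar} (and \cref{rmk:product-choice-projections}), invoke \cref{lem:WS-preserves-st2eq} for the stable $P_2$-equivalence claim, and argue the Picard property from the 2-groupoid structure of $W$ together with $\pi_0(LB\cA)$ being a group. You have simply spelled out more of the bookkeeping (e.g., verifying the operads lie in $\bC(\leq 5)$ and unpacking the chain $\CAN^*p_1^* = (h(\wncan))^* = \wncan^*h^*$) than the paper does.
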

\begin{proof}\proofof{prop:WSLBA-picard}\marginref{thm:choice-of-mult-to-SMB}
  The existence of this natural zigzag follows by applying
  \cref{cor:WSD-zigzag} with the canonical choice of multiplication
  $\CAN$.  By \cref{prop:chistar-fstar-equals-chifstar} we identify
  $\CAN^*p_1^* = \wncan^*h^*$ and $\CAN^*p_2^* = \nu^*$.

  We see that $\nu^* WS(LB\cA)$ is a 2-groupoid because $WX$ is a
  2-groupoid for every simplicial set $X$ (see \cref{thm:W}). We note
  that $\nu^*WS$ preserves $\pi_0$ and that
  $\pi_0(\nu^* WS(LB\cA))\cong \pi_0(LB\cA)$ is a group because
  $LB\cA$ is group-complete.  The
  product is induced by the monoidal structure, and therefore it follows
  that objects have inverses up to equivalence.  Thus
  $\nu^* WS(LB\cA)$ is a Picard 2-category.

  The fact that $\CAN^* WS(\xi)$ is a biequivalence is immediate from
  \cref{cor:WSD-zigzag}, and the claim about $\CAN^* WS(\al)$ follows
  from \cref{lem:WS-preserves-st2eq} because the
  map $\al$ of \cref{gpcomp} is a group-completion and hence a stable equivalence.
\end{proof}

Because the property of being Picard is preserved by biequivalences, we
have the following corollary.
\begin{cor}\label{cor:zigzag2-picard}
  If $\cA$ is a strict Picard 2-category, then the span in
  \cref{prop:WSLBA-picard} is a span of Picard 2-categories.
   \marginproof{cor:zigzag2-picard}{prop:WSLBA-picard}
\end{cor}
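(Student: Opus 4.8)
The plan is to use that being Picard is invariant under biequivalence, so that it suffices to exhibit each of the three 2-categories in the span as biequivalent to a 2-category already known to be Picard. Note first that every term in the span is a strict 2-groupoid, since $W$ takes values in strict 2-groupoids and the operations $\CAN^{*}$, $\nu^{*}$, $\wncan^{*}h^{*}$ leave underlying 2-categories unchanged; for such a symmetric monoidal 2-groupoid, being Picard is equivalent to $\pi_{0}$ being a group.

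The primary route reuses the two surrounding results. The rightmost term $\nu^{*}WS(LB\cA)$ is Picard by \cref{prop:WSLBA-picard} itself, with no hypothesis on $\cA$. For the left-hand term I would invoke the identification, already recorded in the proof of \cref{prop:WSLBA-picard}, that $\wncan^{*}h^{*}WS(B\cA) = \CAN^{*}p_{1}^{*}WS(B\cA)$; since $B\cA = |N\cA|$ and $WS|N(-)| = WSB(-)$ (\cref{notn:maps-to-WSBO}), this term is literally the right-hand endpoint of the span in \cref{prop:zigzag1}, which is Picard by \cref{cor:zigzag1-picard} under the hypothesis that $\cA$ is strict Picard. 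Finally, the apex $\CAN^{*}WS(qB\cA)$ is biequivalent to the left-hand term through the left leg $\CAN^{*}(WS(\xi_{B\cA}))$, which \cref{prop:WSLBA-picard} guarantees is a biequivalence, so the apex is Picard as well. This settles all three terms.

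An alternative route, depending only on \cref{prop:WSLBA-picard}, is to upgrade the right leg to a biequivalence. When $\cA$ is strict Picard the monoid $\pi_{0}(\cA)\cong\pi_{0}(B\cA)$ is a group, so the homotopy-commutative $H$-space $B\cA$—and hence $qB\cA$, which is homotopy equivalent to it via $\xi$—is group-complete. For a group-complete space the localization $H_{*}(-;k)[\pi_{0}^{-1}]$ of \cref{defn:gpcompletion} is trivial, so the defining condition for $\al_{B\cA}$ to be a group-completion reduces to its being a homology isomorphism; as an $H$-map of simple spaces inducing an isomorphism on $\pi_{0}$, it is then a weak homotopy equivalence. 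Applying $S$ (which preserves weak equivalences) and $W$ (which sends weak equivalences to biequivalences, \cref{cor:eta_2equiv}~(\ref{it:W-preserves})), and then $\CAN^{*}$, turns the right leg into a biequivalence too, so all three terms are biequivalent to the Picard 2-category $\nu^{*}WS(LB\cA)$.

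The first route is essentially bookkeeping, so the only genuine content lies in the second: the homotopy-theoretic fact that a topological group-completion of an already group-complete $H$-space is a weak equivalence. I expect that step—rather than anything categorical—to be the point requiring care, and it is precisely where the hypothesis that $\cA$ is Picard, and not merely symmetric monoidal, enters.
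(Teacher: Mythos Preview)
Your primary route is correct and is exactly the paper's (terse) argument: the paper records only the one sentence ``because the property of being Picard is preserved by biequivalences,'' leaving implicit that the left term is Picard by \cref{cor:zigzag1-picard}, the apex is then Picard via the biequivalence $\CAN^{*}(WS(\xi_{B\cA}))$, and the right term is Picard by \cref{prop:WSLBA-picard} itself. Your alternative route is also valid but is extra work the paper does not undertake; the Picard hypothesis enters already in the first route, through \cref{cor:zigzag1-picard}, so your closing remark that the second route is where the hypothesis ``really'' enters slightly undersells the first.
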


\section{Proof of the 2-dimensional stable homotopy hypothesis}
\label{sec:main}

Our main theorem is the following.  
\begin{thm}\label{thm:main}
  There is an equivalence of homotopy theories
  \[
  \htythy{\PicPGM}{\cateq} \hty \htythy{\Sp^2_{0}}{\steq}.
  \]
\end{thm}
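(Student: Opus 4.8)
The plan is to factor the desired equivalence as a chain of four equivalences of homotopy theories, interpolating between the categorical equivalences on the left and the stable equivalences on the right by the class of stable $P_2$-equivalences (maps inducing isomorphisms on stable homotopy groups in degrees $0,1,2$), which I abbreviate $\mathrm{st}P_2$. Concretely, I would establish
\[
  \htythy{\PicPGM}{\cateq}
  \hty \htythy{\Pic\Bicat_{s}}{\cateq}
  \hty \htythy{\SMBicat_{s}}{\mathrm{st}P_2}
  \hty \htythy{\Sp_0}{\mathrm{st}P_2}
  \hty \htythy{\Sp_0^2}{\steq}.
\]
The first equivalence is the Picard case of \cref{lem:PGM-to-SMB-equiv}, applied with $\cW = \cateq$ (legitimate since categorical equivalences of Picard $2$-categories are exactly biequivalences); the last is a standard Postnikov localization. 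The two middle equivalences carry the content, and I describe each in turn.

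On the spectrum side, the second Postnikov truncation $P_2 \cn \Sp_0 \to \Sp_0^2$ is a reflective localization whose local equivalences are precisely the stable $P_2$-equivalences, so $P_2$ realizes the rightmost equivalence. For the adjacent step I would begin from the $K$-theory equivalence $\htythy{\PGM}{\steq} \hty \htythy{\Sp_0}{\steq}$ of \cite{GJO2017KTheory}; combining it with \cref{lem:PGM-to-SMB-equiv} (with $\cW = \steq$, legitimate since every biequivalence is a stable equivalence) gives $\htythy{\SMBicat_{s}}{\steq} \hty \htythy{\Sp_0}{\steq}$. By \cref{defn:st-eq-SMB}, the functor $K$ both preserves and reflects stable $P_2$-equivalences; enlarging the weak equivalences on both sides from $\steq$ to $\mathrm{st}P_2$ along this matching therefore descends to an equivalence $\htythy{\SMBicat_{s}}{\mathrm{st}P_2} \hty \htythy{\Sp_0}{\mathrm{st}P_2}$ of the further localizations.

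The remaining step, $\htythy{\Pic\Bicat_{s}}{\cateq} \hty \htythy{\SMBicat_{s}}{\mathrm{st}P_2}$, is where the Picardification does its work. Composing the natural zigzags of \cref{prop:zigzag1} and \cref{prop:WSLBA-picard} (after a quasistrictification reducing a general object of $\SMBicat_s$ to a permutative Gray-monoid) yields, for every $\cA$, a natural zigzag of strict functors connecting $\cA$ to the Picard $2$-category $\nu^* WS(LB\cA)$. I must verify that every leg is a stable $P_2$-equivalence. The $h$- and $\xi$-legs are biequivalences, the $\al$-leg is a stable $P_2$-equivalence by \cref{prop:WSLBA-picard}, and the $\epz$-leg I would handle via its section: the map $\epz^\centerdot$ of \cref{rmk:epz-is-retraction} has target $B(WN\cA)$, which is $3$-coconnected since $WN\cA$ is a $2$-groupoid, so \cref{cor:n-equiv-alt} together with \cref{prop:P_n-equiv-gp-cpn} shows $\epz^\centerdot$ is a stable $P_2$-equivalence; as $\epz\epz^\centerdot = \id$ and stable $P_2$-equivalences satisfy two-out-of-three, $\epz$ is one too. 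By \cref{cor:zigzag1-picard,cor:zigzag2-picard}, whenever $\cA$ is itself Picard the entire zigzag stays among Picard $2$-categories, where stable $P_2$-equivalences and biequivalences coincide; so on $\Pic\Bicat_s$ this construction is naturally $\cateq$-equivalent to the identity, and the two classes $\cateq$ and $\mathrm{st}P_2$ agree there.

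These facts exhibit $\Pic\Bicat_s$ as the homotopy localization of $\SMBicat_s$ at the stable $P_2$-equivalences: each object admits a natural $\mathrm{st}P_2$-zigzag into $\Pic\Bicat_s$, and the construction restricts to the identity (up to $\cateq$) on Picard objects. I would make the resulting claim that the inclusion is an equivalence of homotopy theories precise at the level of Rezk nerves, exactly as in the proof of \cref{lem:PGM-to-SMB-equiv}, the natural zigzag furnishing a homotopy inverse on classification spaces. I expect the main obstacle to be precisely this last bookkeeping: the Picardification is only a zigzag-natural assignment rather than an honest functor into $\PicPGM$---its output is a priori a Picard symmetric monoidal $2$-category, placed into $\PicPGM$ only after quasistrictification---so the delicate point is checking that naturality and the weak-equivalence classes remain compatible along the whole chain so that the four equivalences compose.
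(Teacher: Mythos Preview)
Your proposal is correct and follows essentially the same strategy as the paper: the same Picardification zigzag (\cref{prop:zigzag1,prop:WSLBA-picard}), the same identification of categorical equivalences with stable $P_2$-equivalences on Picard objects, and the same descent of the $K$-theory equivalence to stable $P_2$-equivalences. The only difference is organizational---the paper states the linchpin step as $\htythy{\PicPGM}{\stpiieq} \hty \htythy{\PGM}{\stpiieq}$ and passes to the bicategory level inside that proof via \cref{lem:PGM-to-SMB-equiv} (see \cref{thm:linchpin} and diagram~(\ref{align:linchpin-diagram})), while you pass to bicategories first; your closing Rezk-nerve sketch is precisely what the paper's \cref{lem:G-suffices} formalizes, and the one point you leave implicit (that the inverse to $K$ also preserves stable $P_2$-equivalences) is the content of \cref{lem:P-preserves-P2}.
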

\begin{proof}\proofof{thm:main}
  The proof follows from putting together several results in this
  section. To be precise, we combine
  \cref{prop:PicPGMcateq-equiv-PicPGMst2eq,prop:PGMst2eq-equiv-Ga2Catst2eq}
  below,
  which follow easily from previous work in
  \cite{GJO2017KTheory,GJOS2017Postnikov}, with \cref{thm:linchpin},
  whose proof depends on the content of \cref{sec:algebra,sec:operads,sec:gp-cpn}.
\end{proof}

\begin{prop}\label{prop:PicPGMcateq-equiv-PicPGMst2eq}
There is an equality of homotopy theories
  \[
  \htythy{\PicPGM}{\cateq} = \htythy{\PicPGM}{\stpiieq}.
  \]
\end{prop}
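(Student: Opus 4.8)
The plan is to prove the sharper statement that $\cateq$ and $\stpiieq$ are literally the \emph{same} class of morphisms of $\PicPGM$; the asserted equality of homotopy theories is then immediate, since the underlying category is the same. So I would fix a strict functor $F \cn \cA \to \cB$ of strict Picard $2$-categories and show that the following are equivalent: (i) $F$ is a biequivalence; (ii) $BF \cn B\cA \to B\cB$ is a weak homotopy equivalence; (iii) $F$ is a stable $P_2$-equivalence.

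For (i) $\Leftrightarrow$ (ii), the forward direction is the standard fact that a biequivalence induces a weak equivalence on nerves, hence on classifying spaces. For the converse I would use that $\cA$ and $\cB$ are strict $2$-groupoids, so \cref{cor:eta_2equiv} applies: if $BF$ is a weak equivalence then $NF$ is a weak equivalence of simplicial sets, and hence $WNF$ is a biequivalence by \cref{cor:eta_2equiv}~(\ref{it:W-preserves}). Since $\epz$ is a natural biequivalence on strict $2$-groupoids (\cref{cor:eta_2equiv}~(\ref{it:epz})) and is natural with respect to the $2$-functor $F$, the square $\epz_{\cB}\circ WNF = F \circ \epz_{\cA}$ commutes, so $F$ is a biequivalence by two-out-of-three. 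The implication (ii) $\Rightarrow$ (iii) then follows from the lemma above showing that a strict functor with $BF$ a weak equivalence is a stable equivalence, a fortiori a stable $P_2$-equivalence.

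The remaining implication (iii) $\Rightarrow$ (ii) is where the Picard hypothesis is essential. The key input, which I expect to quote from \cite{GJOS2017Postnikov}, is that for a Picard $2$-category $\cA$ the classifying space $B\cA$ is \emph{already group-complete}: because $\pi_0(B\cA)$ is a group, the group-completion map $B\cA \to \Om B(B\cA)$ of \cref{thm:property-of-Kthy} is a weak equivalence. Granting this, if $F$ is a stable $P_2$-equivalence then \cref{prop:P_n-equiv-gp-cpn} makes $\Om B(BF)$ an unstable $P_2$-equivalence; transporting across the group-completion weak equivalences for $\cA$ and $\cB$ (which sit in a naturality square with $BF$ and $\Om B(BF)$) and using two-out-of-three for $P_2$-equivalences shows that $BF$ is itself a $P_2$-equivalence. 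Finally $B\cA$ and $B\cB$ are homotopy $2$-types, since the nerve of a $2$-groupoid has no homotopy above dimension $2$, so a $P_2$-equivalence between them is a weak equivalence, yielding (ii). This closes the cycle and gives $\cateq = \stpiieq$ on $\PicPGM$.

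The hard part will be the group-completeness statement in the third paragraph: it is precisely the assertion that for Picard $2$-categories the unstable homotopy groups of $B\cA$ in degrees $0,1,2$ agree with the stable homotopy groups $\pi_n(K\cA)$, so that the categorical (unstable) and stable invariants coincide; everything else is formal bookkeeping with the adjunction $W \dashv N$ and the group-completion comparison of \cref{prop:P_n-equiv-gp-cpn}. If a direct derivation is needed rather than a citation, I would argue it from the definition of group-completion: localizing homology at the already-invertible monoid $\pi_0$ is trivial, so $B\cA \to \Om B(B\cA)$ is a homology isomorphism, and a homology isomorphism between grouplike (hence nilpotent) $H$-spaces is a weak equivalence.
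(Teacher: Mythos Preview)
Your overall strategy---showing that the two classes of weak equivalences literally coincide on $\PicPGM$---is exactly the paper's, and your identification of the ``hard part'' (that for a Picard 2-category the unstable and stable homotopy groups agree) is on target.

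There is, however, a genuine gap in your implication (ii)$\Rightarrow$(i). You invoke \cref{cor:eta_2equiv}~(\ref{it:epz}) by asserting that $\cA$ and $\cB$ are strict 2-groupoids, but they are not: in a strict Picard 2-category the 1-cells are only required to be \emph{equivalences} in the sense of \cref{defn:invertible2}, not strict isomorphisms as in \cref{defn:striigpd}. The remark immediately following \cref{defn:striigpd} makes this distinction explicit. So the counit $\epz\cn WN\cA\to\cA$ is not available, and your 2-out-of-3 argument does not go through as written. The same issue recurs when you claim $B\cA$ is a 2-type ``since the nerve of a 2-groupoid has no homotopy above dimension 2'': that fact is stated in the paper only for \emph{strict} 2-groupoids, so you would need an independent argument that the nerve of a bigroupoid is a 2-type.

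The paper bypasses all of this by citing \cite[Lemma~3.2]{GJOS2017Postnikov}, which gives explicit formulas for the stable homotopy groups of a strict Picard 2-category in terms of its algebraic (categorical) homotopy groups---equivalence classes of objects, isomorphism classes of 1-automorphisms, and 2-automorphisms of identities. A strict functor induces isomorphisms on these if and only if it is essentially surjective and a local equivalence, i.e.\ a biequivalence. This one citation does in a single step what your group-completion argument does in several, and it avoids any appeal to $\cA$ being a strict 2-groupoid. Your route via group-completeness of $B\cA$ is a legitimate alternative and the homology argument you sketch is correct, but to close the loop you still need (ii)$\Rightarrow$(i) for bigroupoids, which amounts to reproving the content of that same lemma.
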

\begin{proof}\proofof{prop:PicPGMcateq-equiv-PicPGMst2eq}
  Recall that a 2-functor is a biequivalence if and only if it is
  essentially surjective and a local equivalence.  The formulas of
  \cite[Lemma 3.2]{GJOS2017Postnikov} show that the stable homotopy
  groups of a strict Picard 2-category are computed by the
  algebraic homotopy groups (i.e.\ equivalence classes of invertible
  morphisms) in each dimension.  Therefore a strict functor between
 strict Picard
  2-categories is a stable $P_2$-equivalence if and only
  if it is a biequivalence.
\end{proof}

\begin{lem}\label{lem:P-preserves-P2}
  The functor $P$ of \cite{GJO2017KTheory} preserves stable
  $P_2$-equivalences.
\end{lem}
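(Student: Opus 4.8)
The plan is to reduce the statement to the effect of $P$ on the bottom three stable homotopy groups, since that is precisely where stable $P_2$-equivalences are detected. By \cref{defn:st-eq-SMB} a strict functor $f$ is a stable $P_2$-equivalence exactly when $Kf$ induces an isomorphism on $\pi_n$ for $n=0,1,2$, and by \cref{thm:property-of-Kthy} these groups agree with $\pi_n\,\Om B(B(-))$. Equivalently, by \cref{prop:P_n-equiv-gp-cpn}, a stable $P_2$-equivalence is exactly a map whose associated map of topological group-completions is an unstable $P_2$-equivalence. The whole lemma therefore comes down to showing that applying $P$ does not disturb this low-degree data.

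First I would recall from \cite{GJO2017KTheory} the construction of $P$ together with its defining homotopical property, namely that $P$ is compatible with $K$-theory in dimensions $0$, $1$, and $2$: there is a natural comparison relating the $K$-theory spectrum of $P(-)$ to $K(-)$ which induces an isomorphism on $\pi_0$, $\pi_1$, and $\pi_2$. This is the precise sense of the informal statement that $P$ retains the stable homotopy groups in dimensions $0$, $1$, $2$. Given a stable $P_2$-equivalence $f\cn \cA \to \cB$, this comparison produces a commuting square of spectra with horizontal maps $Kf$ and $KP(f)$ and with vertical comparison maps that are isomorphisms on $\pi_n$ for $n\le 2$.

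The conclusion is then immediate from a short diagram chase applied degreewise in $\pi_0$, $\pi_1$, $\pi_2$: since the square commutes and both verticals are isomorphisms on these groups, the map $\pi_n(KP(f))$ is a composite of $\pi_n(Kf)$ with the two (invertible) vertical comparisons, hence an isomorphism for $n\le 2$. Thus $P(f)$ is a stable $P_2$-equivalence. Crucially, this argument uses nothing about the behaviour of $P$ in higher degrees, which is exactly why only the $P_2$-statement holds and is claimed, rather than preservation of all stable equivalences.

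The main obstacle is the middle step: extracting from \cite{GJO2017KTheory} the exact form of the comparison between $K(P(-))$ and $K(-)$ and verifying that it is an isomorphism through degree $2$. Everything outside this identification is formal. If the source supplies this compatibility only as a zigzag of stable $P_2$-equivalences rather than as a single natural transformation, I would run the identical argument after replacing the single square by the corresponding zigzag, using that the class of stable $P_2$-equivalences is closed under the 2-out-of-3 property.
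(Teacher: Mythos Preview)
Your strategy is exactly the paper's: use a natural comparison between $KP(-)$ and the identity, then apply 2-out-of-3 on $\pi_0,\pi_1,\pi_2$. The paper's version is the naturality square $\eta\circ f = KPf\circ\eta$ for the unit $\eta$ of \cite[Corollary~7.14]{GJO2017KTheory}, which is in fact a full stable equivalence (not merely a $P_2$-isomorphism), so the ``main obstacle'' you flag dissolves immediately.

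One correction: you have misidentified the domain of $P$. The functor $P$ of \cite{GJO2017KTheory} goes from $\Gamma$-2-categories to permutative Gray-monoids, so the input $f\cn X\to Y$ is a map of $\Gamma$-2-categories, not a strict functor of symmetric monoidal bicategories; your references to \cref{defn:st-eq-SMB} and \cref{prop:P_n-equiv-gp-cpn} are therefore off-target. Once you substitute the correct source category, the square you describe has horizontals $f$ and $KPf$ and verticals the components of $\eta$, and your diagram chase goes through verbatim.
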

\begin{proof}\proofof{lem:P-preserves-P2}
  Using the notation of \cite{GJO2017KTheory}, let $f \cn X \to Y$ be a stable $P_2$-equivalence of
  $\Ga$-2-categories (certain diagrams of 2-categories indexed on
  finite pointed sets; see \cite[Definition 2.11]{GJO2017KTheory}). Since stable
  $P_2$-equivalences of permutative Gray-monoids are created by the $K$-theory functor of
  \cite{GJO2017KTheory}, it suffices to check that $KPf$ is a stable
  $P_2$-equivalence. This is immediate from the naturality of the unit
  $\eta$ with respect to strict $\Ga$-maps (\cite[Corollary
  7.14]{GJO2017KTheory}): we have
  \[
    \eta \circ f = KPf \circ \eta. 
  \] 
  Since $\eta$ is a stable equivalence, then $KPf$ is a stable
  $P_2$-equivalence by 2-out-of-3, and therefore $Pf$ is too.
\end{proof}
\begin{prop}\label{prop:PGMst2eq-equiv-Ga2Catst2eq}
There are equivalences of homotopy theories
  \[
    \htythy{\PGM}{\stpiieq} \hty \htythy{\Sp_{\ge 0}}{\stpiieq}\hty
    \htythy{\Sp^2_{0}}{\steq}.
  \]
\end{prop}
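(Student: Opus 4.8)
The statement bundles two equivalences of homotopy theories, and the plan is to establish each separately, reading the chain from left to right. For the first equivalence $\htythy{\PGM}{\stpiieq} \hty \htythy{\Sp_{\ge 0}}{\stpiieq}$, I would localize the $K$-theory equivalence $\htythy{\PGM}{\steq} \hty \htythy{\Sp_0}{\steq}$ of \cite{GJO2017KTheory} at the coarser weak equivalences. The key observation is that on both sides the stable equivalences are contained in the stable $P_2$-equivalences, so each homotopy theory appearing in the desired equivalence is itself a further localization of one appearing in the $K$-theory equivalence. Both functors realizing that equivalence respect the enlarged class: $K$ sends stable $P_2$-equivalences to stable $P_2$-equivalences by definition (\cref{defn:st-eq-SMB}), the functor $P$ of \cite{GJO2017KTheory} does so by \cref{lem:P-preserves-P2}, and the natural comparison maps witnessing the equivalence are stable equivalences, hence a fortiori stable $P_2$-equivalences. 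I would then invoke the general principle that an equivalence of homotopy theories whose structure maps are compatible with larger classes of weak equivalences on each side descends to an equivalence of the corresponding localizations, working within the Rezk-nerve formalism of \cite{Rez01Model} as in \cite[Section 2.1]{GJO2017KTheory}.

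For the second equivalence $\htythy{\Sp_{\ge 0}}{\stpiieq} \hty \htythy{\Sp^2_0}{\steq}$, I would use Postnikov truncation. The stable truncation functor $P_2 \cn \Sp_{\ge 0} \to \Sp^2_0$, the spectrum-level analogue of the localizing functor in \cref{defn:P_n-equiv}, has as its local objects exactly the stable homotopy $2$-types, and for every connective spectrum $X$ the truncation map $X \to P_2 X$ is a stable $P_2$-equivalence. Restricted to the subcategory $\Sp^2_0$, the stable $P_2$-equivalences coincide with the stable equivalences, since a stable $2$-type has no homotopy outside degrees $0,1,2$. Consequently $\htythy{\Sp_{\ge 0}}{\stpiieq}$ is equivalent to the homotopy theory of its $P_2$-local objects equipped with the restricted weak equivalences, which is precisely $\htythy{\Sp^2_0}{\steq}$.

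The step I expect to be the main obstacle is the first equivalence, namely passing from stable equivalences to the weaker stable $P_2$-equivalences. The subtlety is that the comparison of \cite{GJO2017KTheory} is not realized by a single functor but runs through an intermediate category of $\Ga$-$2$-categories, so I must check that \emph{every} relative functor in that zigzag carries stable $P_2$-equivalences to stable $P_2$-equivalences; securing this compatibility for the backward functor is exactly the purpose of \cref{lem:P-preserves-P2}. Once it is in place the descent to localizations is formal, and the truncation argument underlying the second equivalence is entirely standard.
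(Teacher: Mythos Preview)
Your proposal is correct and follows essentially the same approach as the paper's proof: for the first equivalence you invoke that $K$ creates stable $P_2$-equivalences by definition and $P$ preserves them by \cref{lem:P-preserves-P2}, so the equivalence of \cite{GJO2017KTheory} descends to the localized homotopy theories; for the second equivalence your Postnikov-truncation argument is a spelled-out version of what the paper calls ``a reformulation of definitions.'' If anything, you are more careful than the paper in flagging the intermediate $\Ga$-2-category step in the zigzag of \cite{GJO2017KTheory}.
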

\begin{proof}\proofof{prop:PGMst2eq-equiv-Ga2Catst2eq}
  The $K$-theory functor of \cite[Proposition 6.13]{GJO2017KTheory}
  creates stable $P_2$-equivalences by definition.
  \Cref{lem:P-preserves-P2} observes that the inverse $P$ preserves
  stable $P_2$-equivalences as well.  The first equivalence then
  follows from the equivalences of \cite{GJO2017KTheory} relative to
  stable $P_2$-equivalences.  The second equivalence is a
  reformulation of definitions.
\end{proof}

\begin{thm}\label{thm:linchpin}
  There is an equivalence of homotopy theories
  \[
  \htythy{\PicPGM}{\stpiieq} \hty \htythy{\PGM}{\stpiieq}.
  \]
\end{thm}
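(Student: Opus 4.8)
The plan is to realize the inclusion $J\cn \PicPGM \hookrightarrow \PGM$ as an equivalence of homotopy theories, with homotopy inverse a \emph{Picardification} functor. Define $\mathcal{P}\cn \PGM \to \PicPGM$ by sending $\cA$ to $(\nu^* WS(LB\cA))^{qst}$: by \cref{prop:WSLBA-picard} the symmetric monoidal $2$-category $\nu^* WS(LB\cA)$ is Picard, and quasistrictification (\cref{cohqs2cats}) turns it into a strict Picard $2$-category while attaching a natural strict biequivalence. Splicing the natural zigzag of \cref{prop:zigzag1} to that of \cref{prop:WSLBA-picard}, which share the object $\wncan^* h^* WS(B\cA)$, and then appending the quasistrictification biequivalences, gives for each $\cA$ a natural zigzag of strict functors of symmetric monoidal $2$-categories
\[
\cA \xleftarrow{\epz} \wncan^* WN\cA \xrightarrow{h} \wncan^* h^* WS(B\cA) \xleftarrow{\xi} \CAN^* WS(qB\cA) \xrightarrow{\al} \nu^* WS(LB\cA) \simeq \mathcal{P}\cA .
\]
My goal is to show every leg is a stable $P_2$-equivalence, that each functor occurring preserves stable $P_2$-equivalences, and that the whole zigzag restricts to Picard objects; the Rezk-nerve criterion then finishes the argument.

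Among the legs, $h$, $\xi$, and the quasistrictification maps are biequivalences, hence stable $P_2$-equivalences (a biequivalence induces a level equivalence of $K$-theory spectra, as in the lemma following \cref{defn:st-eq-SMB}), and $\al$ is a stable $P_2$-equivalence by \cref{prop:WSLBA-picard}. The one delicate leg is $\epz$, which \cref{lem:epz-P2-equiv} provides only as an \emph{unstable} $P_2$-equivalence; upgrading it is the main obstacle, since passing a $P_2$-equivalence through group-completion is not automatic. By \cref{prop:P_n-equiv-gp-cpn} it suffices to prove that $\Om B(B\epz)$ is an unstable $P_2$-equivalence. The key input is coconnectivity: because $W$ takes values in strict $2$-groupoids, $WN\cA$ is a $2$-groupoid and $B(WN\cA)$ is $3$-coconnected. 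I would therefore apply \cref{cor:n-equiv-alt} not to $\epz$ but to its section $\epz^\centerdot\cn \cA \to WN\cA$, whose \emph{target} $B(WN\cA)$ is $(2+1)$-coconnected and which is an $E_\infty$ map on classifying spaces because $\epz^\centerdot$ is an operad map (\cref{cor:operads-WNO-WSBO}); since $B\epz^\centerdot$ is a $P_2$-equivalence, \cref{cor:n-equiv-alt} yields that $\Om B(B\epz^\centerdot)$ is a $P_2$-equivalence. As $\epz\,\epz^\centerdot = \id$ (\cref{rmk:epz-is-retraction}) and unstable $P_2$-equivalences of spaces satisfy $2$-out-of-$3$, the identity $\Om B(B\epz)\circ\Om B(B\epz^\centerdot) = \id$ forces $\Om B(B\epz)$ to be a $P_2$-equivalence, so $\epz$ is a stable $P_2$-equivalence.

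With every leg a stable $P_2$-equivalence, I would next note that each functor in the zigzag preserves stable $P_2$-equivalences: starting from $\id_{\PGM}$ and using that the connecting natural transformations are objectwise stable $P_2$-equivalences, $2$-out-of-$3$ applied to the naturality squares propagates relativity along the zigzag, so in particular $\mathcal{P}$ is a relative functor. Restricting to $\PicPGM$, \cref{cor:zigzag1-picard,cor:zigzag2-picard} guarantee that when $\cA$ is a strict Picard $2$-category all objects of the zigzag remain Picard (quasistrictification preserves the Picard property, being a biequivalence), so the same natural zigzag connects $\id_{\PicPGM}$ to $\mathcal{P}J$ and $\id_{\PGM}$ to $J\mathcal{P}$.

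Finally I would invoke the Rezk-nerve comparison used in \cref{lem:PGM-to-SMB-equiv} (see \cite[Corollary 2.9]{GJO2017KTheory}): a natural transformation of relative functors that is an objectwise weak equivalence, and hence any finite zigzag of such, induces homotopic maps of Rezk nerves. Applied to the two zigzags this shows that $J\mathcal{P}$ and $\id_{\PGM}$, respectively $\mathcal{P}J$ and $\id_{\PicPGM}$, induce homotopic maps of Rezk nerves, so the maps induced by $J$ and $\mathcal{P}$ are mutually inverse homotopy equivalences and $J$ is the desired equivalence of homotopy theories. I expect the stable upgrade of the $\epz$ leg to be the crux of the whole proof, as it is exactly the place where the $2$-dimensional (coconnected) nature of $WN\cA$ is what rescues an unstable low-degree equivalence after group-completion.
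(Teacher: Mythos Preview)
Your strategy matches the paper's almost exactly: build the same two-stage zigzag from \cref{prop:zigzag1} and \cref{prop:WSLBA-picard}, verify each leg is a stable $P_2$-equivalence, and observe the whole thing restricts to Picard objects. Your treatment of the $\epz$ leg is actually more careful than the paper's; the paper simply cites \cref{cor:n-equiv-alt,lem:steq-Einfty-gp-cpn}, whereas you correctly note that \cref{cor:n-equiv-alt} does not apply directly to $B\epz$ (its target $B\cA$ need not be $3$-coconnected) and you supply the missing step via $\epz^\centerdot$ and the retraction $\epz\epz^\centerdot=\id$.

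There is, however, one genuine gap in how you close the argument. Your zigzag has intermediate nodes $\wncan^* WN\cA$, $\CAN^* WS(qB\cA)$, etc., which are symmetric monoidal $2$-categories but not permutative Gray-monoids. Thus it is not a zigzag of endofunctors of $\PGM$ (or of $\PicPGM$), and the Rezk-nerve criterion as you invoke it does not literally compare $J\mathcal{P}$ with $\id_{\PGM}$. The paper confronts this head-on: it works instead in the ambient categories $\SMBicat_s$ and $\Pic\Bicat_s$, where the zigzag does live, sets up the commuting square \eqref{align:linchpin-diagram}, and uses \cref{lem:PGM-to-SMB-equiv} (the equivalences labeled $j$ and $r$) together with \cref{lem:G-suffices} to transfer the conclusion back to $\PGM$ and $\PicPGM$. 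Your route can be repaired in either of two ways: apply $(-)^{qst}$ to every node of the zigzag (using \cref{cohqs2cats,thm:cohqs2cats2} for naturality and the extra leg $\nu\cn\cA^{qst}\to\cA$), so that the entire zigzag lands in $\PGM$; or, equivalently, observe that your zigzag really shows $j\simeq j\,J\mathcal{P}$ as maps into $N(\SMBicat_s,\stpiieq)$ and then cancel $j$ using \cref{lem:PGM-to-SMB-equiv}. Either fix is routine, but one of them must be stated for the Rezk-nerve step to go through.
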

To prove \cref{thm:linchpin}, we consider the serially-commuting diagram of
homotopy theories and relative functors below.  \cref{lem:PGM-to-SMB-equiv}
shows that the inclusions $j$ in this diagram are equivalences of
homotopy theories, with inverse equivalences given by $r = (-)^{qst}$.
We will show that the inclusions $i$ are equivalences of homotopy
theories.
\begin{align}\label{align:linchpin-diagram}
\begin{tikzpicture}[x=60mm,y=20mm,baseline={(0,10mm)}]
  \draw[tikzob,mm] 
  (0,0) node (psmb) {\htythy{\Pic\Bicat_{s}}{\stpiieq}}
  (1,0) node (smb) {\htythy{\SMBicat_{s}}{\stpiieq}}
  (0,1) node (ppgm) {\htythy{\PicPGM}{\stpiieq}}
  (1,1) node (pgm) {\htythy{\PGM}{\stpiieq}}
  ;
  \path[tikzar,mm] 
  (psmb) edge[swap] node {i} (smb)
  (ppgm) edge node {i} (pgm)
  (ppgm) edge[bend right, swap] node {j} (psmb) 
  (pgm) edge[bend right, swap] node {j} (smb) 
  (psmb) edge[bend right, swap] node{r} (ppgm)
  (smb) edge[bend right, swap] node{r} (pgm)
  (pgm) edge[dashed, swap] node {\bG} (psmb)
  ;
\end{tikzpicture}
\end{align}
To do this, we first reduce to the problem of constructing a relative
functor $\bG$ which commutes with $i$ and $j$ up to natural zigzags of
stable $P_2$-equivalences.
\begin{lem}\label{lem:G-suffices}
  Suppose there is a relative functor $\bG$ as shown in
  \cref{align:linchpin-diagram}, and suppose that diagram involving
  $\bG$, $i$, and $j$ commutes up to a natural zigzag of stable
  $P_2$-equivalences.  Then the inclusions labeled $i$ are
  equivalences of homotopy theories.
\end{lem}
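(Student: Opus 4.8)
The plan is to treat this as a purely formal statement in the localized (relative) setting, using $\bG$ to manufacture an explicit homotopy inverse to the inclusions labelled $i$. First I would record the structural inputs. By \cref{lem:PGM-to-SMB-equiv} the two vertical inclusions $j$ are equivalences of homotopy theories whose inverse equivalences are $r = (-)^{qst}$, so on each row we have natural zigzags of stable $P_2$-equivalences $rj \simeq \id$ and $jr \simeq \id$. The serial commutativity of the displayed diagram records the strict relations $i\,j = j\,i$ (the square of $i$'s and $j$'s) and $i\,r = r\,i$ (the square of $i$'s and $r$'s); the latter holds because $(-)^{qst}$ carries Picard bicategories to strict Picard $2$-categories and so commutes with the full inclusions $i$. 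Finally, I read the assumed commutativity of the $\bG$-diagram as the two natural zigzags of stable $P_2$-equivalences $i\,\bG \simeq j$ (the triangle $\PGM \to \Pic\Bicat_{s} \to \SMBicat_{s}$) and $\bG\,i \simeq j$ (the triangle $\PicPGM \to \PGM \to \Pic\Bicat_{s}$).

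Second, I would reduce to a single inclusion. Equivalences of homotopy theories satisfy $2$-out-of-$3$ (see \cite{Rez01Model} and \cite[Section 2.1]{GJO2017KTheory}), so applying this to the commuting square $i\,j = j\,i$, in which both $j$'s are equivalences, shows that the top inclusion $i\cn \PicPGM \to \PGM$ is an equivalence of homotopy theories if and only if the bottom inclusion $i\cn \Pic\Bicat_{s} \to \SMBicat_{s}$ is. It therefore suffices to treat the top inclusion.

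Third, I would exhibit the composite $r\,\bG\cn \PGM \to \PicPGM$ as a homotopy inverse to the top $i$. Applying the relative functors $r$ to the zigzags above and using $i\,r = r\,i$ together with $rj \simeq \id$ gives, in each case as a natural zigzag of stable $P_2$-equivalences,
\[ i\,(r\,\bG) = (i\,r)\,\bG = (r\,i)\,\bG = r\,(i\,\bG) \simeq r\,j \simeq \id_{\PGM}, \qquad (r\,\bG)\,i = r\,(\bG\,i) \simeq r\,j \simeq \id_{\PicPGM}, \]
where in each composite one uses the appropriate row of $r$ and $j$. Hence $r\,\bG$ is a two-sided homotopy inverse and the top $i$ is an equivalence of homotopy theories; by the reduction of the previous paragraph, so is the bottom $i$.

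The main obstacle is not the algebra but the correct reading of the hypothesis together with the bookkeeping in the relative setting: one must be sure that ``the diagram commutes'' supplies \emph{both} triangles $i\,\bG \simeq j$ and $\bG\,i \simeq j$, since it is the second that forces $\bG$ to restrict, up to stable $P_2$-equivalence, to the Picard inclusion on the image of $i$. This is exactly where the hypothesis does its work; in the concrete construction of $\bG$ it will come from the fact that the Picardification zigzags consist of Picard $2$-categories whenever the input is Picard (\cref{cor:zigzag1-picard,cor:zigzag2-picard}). One must also check the routine points that every displayed step is genuinely a zigzag of stable $P_2$-equivalences and that the relative functor $r = (-)^{qst}$ preserves such zigzags, which it does as one half of an equivalence of homotopy theories.
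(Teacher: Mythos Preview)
Your proof is correct and follows essentially the same approach as the paper's. The only cosmetic difference is that the paper reduces to the \emph{bottom} inclusion $i\cn \Pic\Bicat_{s} \to \SMBicat_{s}$ and exhibits $\bG r$ as its homotopy inverse, whereas you reduce to the \emph{top} inclusion and exhibit $r\bG$; the formal manipulations using $i\bG \simeq j$, $\bG i \simeq j$, $rj \simeq \id$, and the commutativity $ir = ri$ are identical in both cases.
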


\begin{proof}\proofof{lem:G-suffices}
  Because the square involving $i$ and $j$ commutes, it suffices to
  prove that the inclusion
  \[
    i \cn   \htythy{\Pic\Bicat_{s}}{\stpiieq} \to \htythy{\SMBicat_{s}}{\stpiieq}
  \]
  is an equivalence of homotopy theories.  We do this by showing that
  the composite $\bG r$ is an inverse for $i$ up to natural zigzag of
  stable $P_2$-equivalences.

  \newcommand{\zzag}{\stackrel{\sim}{\leftrightarrow\leftrightarrow}}
  Let us write $\zzag$ to denote a natural zigzag of stable
  $P_2$-equivalences.  Then the proof of \cref{lem:PGM-to-SMB-equiv}
  shows we have
  \[
    jr \zzag \id \qquad \mathrm{ and } \qquad rj \zzag \id.
  \]
  By assumption, we have $i\bG \zzag j$ and $\bG i \zzag j$.  Hence we have
  \[
    i\bG r \zzag jr \zzag \id \qquad \mathrm{ and } 
    \qquad \bG ri = \bG ir \zzag jr \zzag \id.
  \]
\end{proof}

Now we describe $\bG$ and show that it satisfies the hypotheses of \cref{lem:G-suffices}.
\begin{defn}\label{defn:G}
  Let $\bG=\nu^* WS(LB-)$.
\end{defn}
Recalling the relevant notation, this is the composite of the
classifying space $B$, topological group completion $L$, singular
simplicial set $S$, Whitehead 2-groupoid $W$, and choice of
multiplication $\nu^{*}$ (applied to a permutative Gray-monoid
considered as an $\cO$-algebra via \cref{prop:PGM-is-OAlg}).  By
\cref{prop:WSLBA-picard}, this is a functor from permutative
Gray-monoids to Picard 2-categories.  We will confirm that $\bG$ is a
relative functor in the course of the proof of \cref{thm:linchpin}.

\begin{rmk}
  The attentive reader will note that $\nu^*$ takes values in the
  subcategory $\Pic\iicat_{s} \subset \Pic\Bicat_{s}$. We implicitly
  compose with this inclusion because, although we suspect $\PGM$,
  $\SMIICat_{s}$, and $\SMBicat_{s}$ all have equivalent homotopy
  theories (and likewise for the Picard subcategories of each), the
  proof of \cref{lem:PGM-to-SMB-equiv} does not specialize to
  $\SMIICat_{s}$.
\end{rmk}

\begin{proof}[Proof of \cref{thm:linchpin}]\proofof{thm:linchpin}
  The necessary zigzags to apply \cref{lem:G-suffices} have already
  been constructed; we review them now.  Let $\cA$ be a permutative
  Gray-monoid. To compare $j\cA$ and $i\bG(\cA)$, we require three
  operads: $B\cO$ is the geometric realization of categorical
  Barratt-Eccles operad $\cO$ (\cref{defn:operadO}); $\sC_\infty$ is
  the little infinite cubes operad (\cref{notn:Cinfty-Dinfty}); and
  $B\cO_\infty = B\cO \times \sC_\infty$ is their product.

  We consider choices of multiplication induced by operad maps
  shown in \cref{disp:operad-zigzag-2} below (see
  \cref{cor:operads-WNO-WSBO,cor:operads-WSC-WSD}).
  \begin{align}
    \begin{tikzpicture}[x=23mm,y=20mm,baseline=(current bounding box.center)]
      \usetikzlibrary{calc}
      \draw[tikzob,mm] 
      (0,0) node (O) {\cO}
      (1,.8) node (WNO) {WN\cO}
      (2,0) node (WSBO) {WSB\cO}
      (3,.8) node (WSD) {WSB\cO_\infty}
      (4,0) node (WSC) {WS\sC_\infty};
      \path[tikzar,mm] 
      (WNO) edge[swap] node {\epz} (O)
      (WNO) edge node {h} (WSBO)
      (WNO) edge node {} (WSBO)
      (WSD) edge[swap] node {p_1} (WSBO)
      (WSD) edge node {p_2} (WSC);
    \end{tikzpicture}
    \label{disp:operad-zigzag-2}
  \end{align}
  This is a diagram of operads in $\iicat$, that is, at level $n$ the
  maps are given by 2-functors.
  With appropriate choices of multiplication, we construct the
  required zigzag in two stages. First, we use $\wncan$, given by
  applying $\epz^\centerdot$ to the canonical choice $\can$ (see 
  \cref{defn:wt-can-choice}).  By
  \cref{prop:zigzag1} we have the following zigzag of 
  in $\SMIICat_{s}$, where the right leg is a biequivalence and the
  left leg is an unstable $P_2$-equivalence and therefore a stable
  $P_2$-equivalence by \cref{cor:n-equiv-alt,lem:steq-Einfty-gp-cpn}.
  \[
    \begin{tikzpicture}[x=30mm,y=20mm]
      \draw[tikzob,mm] 
      (0,0) node (a) {\cA}
      ++(.35,0) node (ka) {\can^* \cA}
      ++(.55,0) node (ka') {\wncan^* \epz^* \cA}
      ++(1,.8) node (wna) {\wncan^* WN(\cA)}
      ++(1,-.8) node (wsba) {\wncan^* h^* WSB(\cA)}
      ;
      \path[tikzar,mm] 
      (wna) edge[swap] node {\wncan^*(\epz_{\cA})} (ka')
      (wna) edge node {\wncan^*(h_{\cA})} (wsba)
      ;
      \node[mm] at ($ (a) !.4! (ka) $) {=};
      \node[mm] at ($ (ka) !.45! (ka') $) {=};
    \end{tikzpicture}
  \]
  Second, we use $\CAN$, described in \cref{WSDinfty-mult-choice}.
  By \cref{prop:WSLBA-picard} we have the following zigzag in
  $\SMIICat_{s}$, where the left leg is a biequivalence, and the
  right leg is a stable $P_2$-equivalence.
  \[
    \begin{tikzpicture}[x=30mm,y=20mm]
      \draw[tikzob,mm] 
      (0,0) node (wsba) {\wncan^* h^* WS(B\cA)}
      ++(.94,0) node (wsba') {\CAN^* p_1^* WS(B\cA)}
      ++(1,.8) node (wsqba) {\CAN^* WS(qB\cA) }
      ++(1,-.8) node (wslba) {\CAN^* p_2^* WS(LB\cA)}
      ++(1,0) node (wslba') {\nu^* WS(LB\cA)}
      ;
      \path[tikzar,mm] 
      (wsqba) edge[swap] node {\CAN^*(WS(\xi_{B\cA}))} (wsba')
      (wsqba) edge node {\CAN^*(WS(\al_{B\cA}))} (wslba)
      ;
      \node[mm] at ($ (wslba) !.55! (wslba') $) {=};
      \node[mm] at ($ (wsba) !.50! (wsba') $) {=};
    \end{tikzpicture}
  \]
  Thus we have a natural zigzag of stable $P_2$-equivalences between
  $i\bG$ and $j$.  This also shows that $\bG$ is a relative functor
  since $j$ preserves and $i$ creates stable $P_2$-equivalences.

  As noted in
  \cref{cor:zigzag1-picard,cor:zigzag2-picard}, this is a zigzag of
  Picard 2-categories when $\cA$ is a strict Picard 2-category.  Thus
  we also have a natural zigzag of stable $P_2$-equivalences between
  $\bG i$ and $j$.  By \cref{lem:G-suffices}, this completes the proof.
\end{proof}

The key step, producing a zigzag of stable $P_2$-equivalences between
$i\bG$ and $j$, is summarized in \cref{fig:summary}.
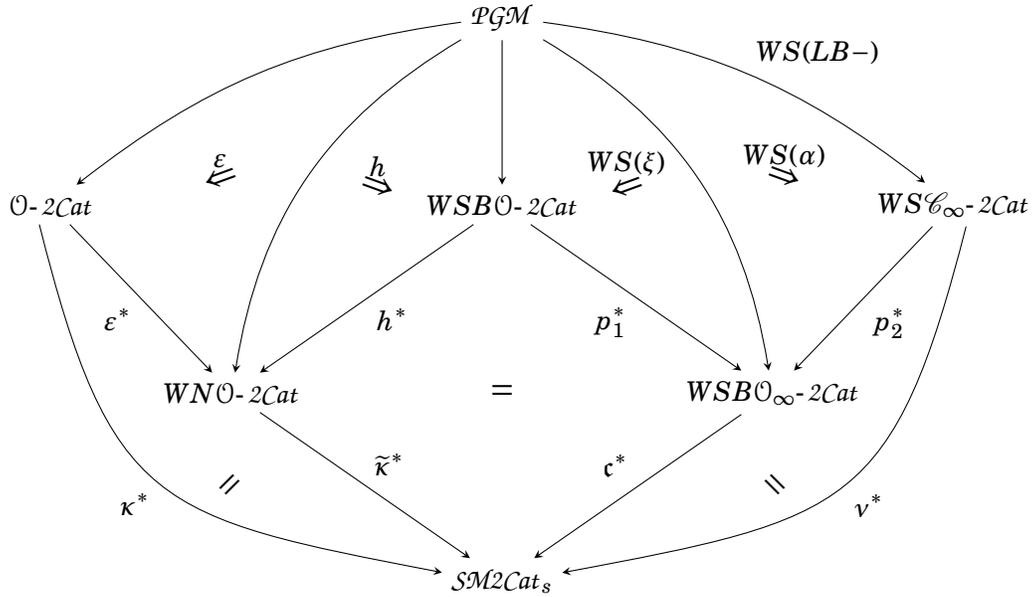
\begin{figure}[H]
\begin{tikzpicture}[x=30mm,y=25mm]
  \draw[tikzob,mm] 
  (0,3) node (pgm) {\PGM}
  (0,2) node (wsbo) {WSB\cO\mh\iicat}
  (-2,2) node (o) {\cO\mh\iicat}
  (2,2) node (wsc) {WS\sC_\infty\mh\iicat}
  (-1.2,1) node (wno) {WN\cO\mh\iicat}
  (1.2,1) node (wsd) {WSB\cO_\infty\mh\iicat}
  (0,0) node (sm2) {\SMIICat_{s}}
  ;
  \path[->,auto,mm] 
  (pgm) edge[swap, bend right=15] node {} (o)
  (pgm) edge[swap, bend right=25] node {} (wno)
  (pgm) edge node {} (wsbo)
  (pgm) edge[bend left=25] node {} (wsd)
  (pgm) edge[bend left=15] node {WS(LB-)} (wsc)
  (o.240) edge[swap, bend right=35, in=210, looseness=1.4] node {\can^*} (sm2)
  (o) edge[swap] node {\epz^*} (wno)
  (wsbo) edge node {h^*} (wno)
  (wsbo) edge[swap] node {p_1^*} (wsd)
  (wsc) edge node {p_2^*} (wsd)
  (wsc.300) edge[bend left=35, in=150, looseness=1.4] node {\nu^*} (sm2)
  (wno) edge node {\wncan^*} (sm2)
  (wsd) edge[swap] node {\CAN^*} (sm2)
  ;
  \draw[tikzob,mm]
  (-1.25,2.15) node[rotate=200,font={\Large}] {\Rightarrow} node[above]
  {\epz}
  (-0.55,2.1) node[rotate=-20,font={\Large}] {\Rightarrow} node[above]
  {h}
  (0.55,2.1) node[rotate=200,font={\Large}] {\Rightarrow} node[above]
  {WS(\xi)}
  (1.25,2.15) node[rotate=-20,font={\Large}] {\Rightarrow} node[above]
  {WS(\al)}
  (0,1) node[font={\Large}] {=}
  (-1.2,.5) node[rotate=45,font={\Large}] {=}
  (1.2,.5) node[rotate=-45,font={\Large}] {=}
  ;
\end{tikzpicture}
\caption{This diagram of categories, functors, and natural
  transformations summarizes the zigzag constructed in the proof of
  \cref{thm:linchpin}}
\label{fig:summary}
\end{figure}
Composing with the inclusion $\SMIICat_{s} \subset \SMBicat_{s}$, the
composite along the left hand side becomes the
inclusion $j$.  Likewise, the composite $\nu^* WSLB$ along the right
hand side becomes $i \bG$. The components of $h$ and
$WS(\xi)$ are 
biequivalences; the components of $\epz$ and $WS(\al)$ are
stable $P_2$-equivalences.

\bibliographystyle{gjotex/amsalpha2}
\bibliography{gjotex/Refs}%

\end{document}